\newcommand{\vertex}{\node[vertex]}
\def\namedlabel#1#2{\begingroup
    #2%
    \def\@currentlabel{#2}%
    \phantomsection\label{#1}\endgroup
}
\newcommand{\code}{\calC}
\newcommand{\realiz}{\calU}
\newcommand{\fullreal}{\calU = \{U_i\}_{i=1}^n}
\newcommand{\rig}{\mathfrak{R}}
\newcommand{\conn}{\mathfrak{C}}
\newcommand{\distinguished}{distinguished }
 \newcommand{\atomMacro}[2]{A ^{#1}_{#2}}
 \newcommand{\odim}{\dim^{\rm open}}
 \newcommand{\cdim}{\dim^{\rm closed}}
 \newcommand{\nondegdim}{\dim^{\rm nondeg}}
\newcommand{\ncfamn}{\calD_n}
\newcommand{\R}{{\mathbb R}}
\newcommand\x{360/14}
\newcommand\y{360/6}
\newcommand\z{360/10}
\newcommand\w{360/12}
\newtheorem{theorem}{Theorem}[section]
\newtheorem{proposition}[theorem]{Proposition}
\newtheorem{corollary}[theorem]{Corollary}
\newtheorem{lemma}[theorem]{Lemma}
\theoremstyle{definition}
\newtheorem{example}[theorem]{Example}
\newtheorem{definition}[theorem]{Definition}
\newtheorem{remark}[theorem]{Remark}
\newtheorem{question}[theorem]{Question}
\newtheorem{assumption}[theorem]{Assumption}
\newcommand{\calC}{{\mathcal{C}}}
\newcommand{\calD}{{\mathcal{D}}}
\newcommand{\calU}{{\mathcal{U}}}
\newcommand{\CruzCode}{\calC_{\rm Cr}}
\begin{document}

\title[Nondegenerate neural codes and obstructions to closed-convexity]{Nondegenerate neural codes and obstructions to \\ closed-convexity}
%\title{Exploring Convex Degeneracy in Open and Closed Codes}
%
\author[P.~Chan]{Patrick Chan$^1$}
\address{$^1$Loyola University Chicago}
\author[K.~Johnston]{Katherine Johnston$^2$}
\address{$^2$Lafayette College}
\author[J.~Lent]{Joseph Lent$^3$}
\address{$^3$Pennsylvania State University}
\author[A.~Ruys de Perez]{Alexander Ruys de Perez$^4$}
\address{$^4$Texas A\&M University}
%\address{Department of Mathematics\\Texas A\&M University\\College Station, TX 77843, USA}
%
\author[A.~Shiu]{Anne Shiu$^4$}

\date{\today}
%\date{November 9th 2020}

\maketitle
\begin{abstract}
Previous work on convexity of neural codes has produced codes that are open-convex but not closed-convex -- or vice-versa.
However, why a code is one but not the other, and how to detect such discrepancies are open questions. 
We tackle these questions in two ways. First, 
we investigate the concept of nondegeneracy introduced by Cruz 
{\em et al.}  
We extend their results to show that 
nondegeneracy precisely captures the situation when taking closures or interiors of open or closed realizations, respectively, does not change the code that is realized. 
Second, we give the first general criteria for precluding a code from being closed-convex (without ruling out open-convexity), unifying ad-hoc geometric arguments in prior works.  
One criterion is built on a phenomenon we call a rigid structure, while the other can be stated algebraically, in terms of the neural ideal of the code. 
%based on the canonical form, a specific generating set of a polynomial ideal arising from the code.  
These results complement existing criteria having the opposite purpose: precluding open-convexity but not closed-convexity.
Finally, we show that a family of codes shown by Jeffs to be not open-convex is in fact closed-convex and realizable in dimension three.
 \vskip 0.1cm
%: Keywords
  \noindent \textbf{Keywords:} neural code, convex, embedding dimension, simplicial complex
\end{abstract}

\section{Introduction} \label{sec:intro}
This work focuses on the following question: Which intersection patterns can be cut out by convex open sets in some Euclidean space, and which by convex closed sets?  
Such intersection patterns are called {\em neural codes}, 
%The study of convex neural codes stands out in both its applications to neuroscience and its ramifications in mathematics. The neural code, represented as a collection of binary strings, encodes the intersection patterns of a collection of subsets of $\bbR^d$. These neural codes 
as they arise in neuroscience as representations of the firing patterns of neurons called place cells.
A {\em place cell} 
in someone's brain
fires 
precisely when the person is in a specific region of space, called a {\em place field}~\cite{Oke2}. 
Because experimental data have shown that place fields are typically convex, 
we are interested in the question of which neural codes can be 
realized by collections of convex open or convex closed regions.  Such codes are, respectively, {\em open-convex} and {\em closed-convex}.
%However, the consequences reach further into pure mathematics, as they prove that topological and geometric properties can be encoded combinatorially.

The main way to show that a neural code is neither open-convex nor closed-convex is to prove that it has what is called a local obstruction~\cite{new-obs, no-go, Jeffs2018convex}.  However, it is possible to have no local obstruction and yet be non-open-convex~\cite{jeffs2019sunflowers, lienkaemper2017obstructions} or non-closed-convex~\cite{cruz2019open, goldrup2020classification} or both~\cite{non-monotonicity}.
Indeed, there are neural codes that are open-convex but not closed-convex, and vice-versa; in this article, we are interested in distinguishing between these classes.  

One way to investigate open-convexity versus closed-convexity codes is to see what happens when we take closures or interiors of open or closed realizations, respectively.  We strengthen a result of Cruz {\em et al.} to show that their concept of nondegeneracy~\cite{cruz2019open} exactly captures the situation when taking closures or interiors 
yields another realization of the code (Theorem~\ref{thm:nondeg-summary}), and so such codes are both open-convex and closed-convex.

We are also interested in non-local obstructions to open- or closed-convexity.  
Criteria precluding open-convexity arise from the concept of ``wheels''~\cite{wheels} and related ideas~\cite{jeffs2019sunflowers}.  So far, however, there are no analogous criteria for ruling out closed-convexity~\cite[Remark 3.5]{non-monotonicity}.  
%Gambacini {\em et al.} says in~\cite[Remark 3.5]{non-monotonicity}: ``In the future,
%we would like a general set of criteria that preclude closed-convexity, and which prove, as special cases, that the [...] relevant codes [...] are not closed-convex.''
Accordingly, we give two such criteria, unifying ad-hoc geometric arguments in prior works~\cite{cruz2019open, goldrup2020classification}.
%     The only examples of codes (with no local obstructions) that are open-convex but not closed-convex are in \cite{cruz2019open, goldrup2020classification, non-monotonicity}.

One criterion harnesses a phenomenon we call a rigid structure (see Definition~\ref{def:rigid} and Theorem~\ref{thm:rigid}).  The criterion is similar in spirit to criteria based on wheels, and therefore is widely applicable. 
Our results here allow us to construct the first infinite family of codes that are open-convex but not closed-convex (Theorem~\ref{thm:infinite-family-non-closed}).  
Moreover, for many codes, our criterion is straightforward to apply -- simply by inspection of a related graph (Theorem~\ref{thm:rigid-when-gph-is-cycle}).  
In general, however, this first criterion can be difficult to check.  

Our second criterion, in contrast, can be read directly from the code; it asks whether some codewords contain certain subsets of neurons (Theorem~\ref{thm:criterion-non-closed-convex-via-RF}). 
This criterion, while somewhat limited in application, nevertheless yields an algebraic signature of non-closed-convexity, arising in the neural ideal of the code (Corollary~\ref{cor:criterion-CF}).  
In this way, we mirror algebraic signatures that are known for local obstructions~\cite{curto2018algebraic}. 
%based on the canonical form, a specific generating set of a polynomial ideal arising from the code.  

Finally, 
we investigate an infinite family of codes shown by Jeffs to be not open-convex~\cite{jeffs2019sunflowers}. 
We show that these ``sunflower codes'' are closed-convex and, moreover, realizable in dimension three (Theorem~\ref{thm:sunflower-code}).  This is the first family of codes known to be closed-convex but not open-convex.

The outline of our work is as follows. 
Section~\ref{sec:background} introduces neural codes, convexity, nondegeneracy, and neural ideals.  
Section~\ref{sec:degenerate-v-nondeg} contains our results on nondegeneracy and embedding dimensions of codes.
Our criteria for precluding closed-convexity appear in Sections~\ref{sec:rigid}
and~\ref{sec:criterion-RF},
and then we present our closed-convex realization of sunflower codes in Section~\ref{sec:sunflower}.
We end with a discussion in Section~\ref{sec:discussion}.

\section{Background} \label{sec:background}
In this section, we introduce convex neural codes (Section~\ref{sec:codes-convexity}), nondegenerate realizations of neural codes (Section~\ref{sec:nondeg}), and neural ideals (Section~\ref{sec:neural-ideal}).
%\begin{itemize}
%    \item Results from \cite{curto2013neural}
%    \item Examples of codes open but not closed-convex (and vice-versa)
%    \item Example of code where open dim and closed dim are not equal
%    \item Define degeneracy (and state prior result)~\cite{cruz2019open}
%    \item Define: neural code, realization, atom, convex open/closed, open/closed embedding dim {\color{red} Done.}
%    \item Recall result: nondeg result of~\cite{cruz2019open}
%    \item Define sunflower codes (here or in Section~\ref{sec:sunflower})
%\end{itemize}

\subsection{Neural codes and convexity } \label{sec:codes-convexity}
For a set $Y \subseteq \mathbb{R}^d$, we let ${\rm int}(Y)$, ${\rm cl}(Y)$, and $\partial Y$ denote the interior, closure, and boundary of $Y$, respectively.  
If $\mathcal{U} = \{U_1,U_2,\dots,U_n\}$
is a 
collection of subsets of $\mathbb{R}^d$, we use the notation
${\rm int}(\calU):= \{ {\rm int}(U_1), {\rm int}(U_2), \dots,  {\rm int}(U_n)\}$ and ${\rm cl}(\calU):= \{ {\rm cl}(U_1), {\rm cl}(U_2),\dots, {\rm cl}(U_n)\}$.

Below we largely follow the notation of Cruz {\em et al.}~\cite{cruz2019open}.  

\begin{definition} \label{def:code}
A {\em neural code} $\mathcal{C}$ on $n$ neurons is a set of subsets of $[n]:= \{1,2,\dots, n\}$, that is, $\mathcal{C} \subseteq 2^{[n]}$.  Each element of $\mathcal{C}$ is a {\em codeword}.
\end{definition}

For convenience, we often use a shorthand for codewords, e.g., $124$ in place of $\{1,2,4\}$.

\begin{definition} \label{def:restricted}
Let $\mathcal{C}$ be a code on $n$ neurons, and let $\tau \subseteq [n]$.  
The {\em code obtained from $\mathcal{C}$ by restricting to $\tau$} is the neural code $\code|_{\tau} := \{\sigma \cap \tau \mid \sigma \in \mathcal{C} \}$.
\end{definition}

\begin{definition} \label{def:realize}
Let  $\mathcal{U} = \{U_1,U_2,\dots,U_n\}$
be a 
collection of %open 
sets in a {\em stimulus space} $X \subseteq \mathbb{R}^d$.
\begin{itemize}
    \item[(i)] For $\sigma \subseteq [n]$, the {\em atom} of $\sigma$ with respect to $\mathcal{U}$, 
    denoted by $\atomMacro{\calU}{\sigma}$,
    is the following subset of $X$:
    \begin{align*}
        \atomMacro{\calU}{\sigma} \quad := \quad 
        %U_\sigma 
        \left( \bigcap_{i \in \sigma} U_i \right)
        \smallsetminus  \bigcup_{j \notin \sigma} U_j~.
    \end{align*}
    By convention, the empty intersection is $\cap_{i\in \emptyset} U_i := X$.
    \item[(ii)] The neural code {\em realized} by $\calU$ and $X$, denoted by 
    ${\rm code}(\mathcal{U}, X)$, 
    is the neural code on $n$ neurons defined by:
    \begin{align*}
        \sigma \in {\rm code}(\mathcal{U}, X) ~ \Longleftrightarrow ~ 
    \atomMacro{\calU}{\sigma} \neq \emptyset~.
    \end{align*}
    In this case, $\mathcal{U}$ is a {\em realization} of $\code = {\rm code}(\mathcal{U}, X)$.
\end{itemize}
\end{definition}

\begin{definition} \label{def:convex}
%Let $\mathcal{C}$ be a neural code on $n$ neurons.
A neural code $\mathcal{C}$ on $n$ neurons 
%    The code  $\mathcal{C}$ 
    is {\em open-convex} (or {\em closed-convex}) 
    if, for some $\mathbb{R}^d$, 
    there exist: 
    \begin{itemize}
    \item[(i)]
        a collection of 
    convex open (or, respectively, closed) sets $\fullreal$ in $\mathbb{R}^d$, 
    and 
    \item[(ii)]
     a stimulus space $X \subseteq \mathbb{R}^d$ that contains the union $\cup_{i=1}^n U_i$, 
    \end{itemize}
    %    there exist
    %a stimulus space $X \subseteq \mathbb{R}^d$ 
    %and
    %convex open (or, respectively, closed) sets %$\fullreal$ 
    such that 
    $\code=  {\rm code}(\mathcal{U}, X)$.  The minimum such value of $d$ (or $\infty$ if no such value exists) is the {\em open embedding dimension} (respectively, {\em closed embedding dimension}) of $\code$, which we denote by $\odim(\code)$ (respectively, $\cdim(\code)$).
\end{definition}

\begin{assumption} %[Every code contains the empty codeword]
\label{assumption:emtpy-codeword}
Unless otherwise specified, we will assume that all codes contain the empty set as a codeword and 
%so we can always use 
that
$X=\mathbb{R}^d$, for some $d$ (see~\cite[Remark 2.19]{new-obs}).
\end{assumption}

The main way to prove that a code is neither open-convex nor closed-convex is to show that it has a ``local obstruction''~\cite{what-makes,no-go} or a generalized such obstruction~\cite{new-obs,Jeffs2018convex}.  
(Having such an obstruction, however, is not necessary to be non-open-convex or non-closed-convex~\cite{cruz2019open, lienkaemper2017obstructions}.)
These obstructions are combinatorial and topological, and we do not give a definition here.

\begin{example}[Open-convex but not closed-convex] \label{ex:goldrup-phillipson}
To our knowledge, only four neural codes in the literature have been shown to be open-convex but not closed-convex.  
The first, a code on $6$ neurons, was found by 
Cruz {\em et al.}~\cite[Lemma 2.9]{cruz2019open}: 
\[
\CruzCode ~=~ \{
    {\bf 123}, {\bf 126}, {\bf 156},  {\bf 234}, {\bf 345}, {\bf 456},
            12, 16, 23,  34,  45, 56,
            \emptyset \}~.
\]
(Following the literature, the maximal codewords are in boldface.) 
Similarly, Goldrup and Phillipson proved that the following three codes on 5 neurons are open-convex but not closed-convex~\cite[Theorem~4.1]{goldrup2020classification}: \begin{align*}
 C6 ~&=~ \{ {\bf 123}, {\bf 125}, {\bf 145}, {\bf 234},
            12, 15, 23,
            4, 
            \emptyset\}~, \\
 C10 ~&=~\{ {\bf 134}, {\bf 135},  {\bf 234}, {\bf 245},
            {\bf 12}, 13, 24, 34,
            1,2,5,
            \emptyset\}~, \\
 C15 ~&=~\{ {\bf 123}, {\bf 125},
            {\bf 145} ,{\bf  234}, {\bf 345},
            12,15,23,34,45, \emptyset\}~.
\end{align*}
One aim of this work is to give general criteria that in particular show that  
the codes $\CruzCode$, $C6$, $C10$, and $C15$
are non-closed-convex.  See Proposition~\ref{prop:examples-rigid} and Example~\ref{ex:3-codes-via-RF-criterion}.  
\end{example}

\begin{example}[Closed-convex but not open-convex] \label{ex:closed-not-open}
The following is the first code that was shown to be closed-convex~\cite{cruz2019open}
but not open-convex (it is also the first code known to lack local obstructions and yet be non-open-convex)~\cite[Theorem 3.1]{lienkaemper2017obstructions}:
\[
\code^{\star} = \{ {\bf 2345}, {\bf 123}, {\bf 134}, {\bf 145}, 13, 14, 23, 34, 45, 3, 4, \emptyset \}~.
\]
%{\color{red}Add an example of a code on $6$ neurons and a wheel?}  
An infinite family of codes that are closed-convex but not open-convex appears later in Theorem~\ref{thm:sunflower-code}.
%in Section~\ref{sec:sunflower} we examine an infinite family of codes that Jeffs proved to be non-open-convex~\cite{jeffs2019sunflowers} and we show to be closed-convex (Theorem~\ref{thm:sunflower-code}).
\end{example}

As mentioned above, codes with local obstructions are neither open-convex nor closed-convex.
Here, however, we are interested in codes without local obstructions.  

\begin{example}[Neither open-convex nor closed-convex] \label{ex:non-monotone-code}
%Gambacini {\em et al.} showed that t
The following code on 8 neurons is not closed-convex, despite having no local obstructions~\cite[Theorem 3.2]{non-monotonicity}:
\begin{align*} %\label{eq:non-monotone-code}
\mathcal C_8 = 
    \{
     \mathbf{12378}, 
    \mathbf{1457}, \mathbf{2456}, \mathbf{3468},
    278, % new codeword here
    17, 38, 45, 46, 
    2, 
    \emptyset
    \}    ~.
\end{align*}
This code is also not open-convex.  Indeed, the code obtained from $\code_8$ by restricting to $\{1,2,3,4,5,6\}$ is (up to permuting neurons) the minimally non-open-convex code in~\cite[Theorem 5.10]{jeffs2020morphisms}, and restriction preserves convexity.  
% Lem 6.1 of wheels
Another example of a non-open-convex and non-closed-convex code with no local obstructions is given in~\cite[Theorem 3.11]{non-monotonicity}.
\end{example}

Examples~\ref{ex:goldrup-phillipson} and~\ref{ex:closed-not-open} above feature codes with finite open embedding dimension, but infinite closed embedding dimension  -- or vice-versa.  
Next, we see that
even when both embedding dimensions are finite, they can differ.

% FIGURE TO EDIT
%    \begin{figure}[ht]
%        \centering
%        \captionsetup{justification=centering}
%        \includegraphics[width=\linewidth]{Csubthetaindiffdim.png}
%        \caption{Realizations
%        of the code $\mathcal{C_\theta} = \{ 
%{\bf 123}, 14, 24, 34, 1,2,3,4, \emptyset
%\}$: (left) closed-convex, (middle) open, (right) open or closed-convex.
%        {\color{blue} May want to edit figure and/or label codewords or receptive fields.  For instance, the open/closed realization needs to have the hyperplane be a thickened (full-dimensional) hyperplane.  Or cite another paper for the figure.}
%        }
%        \label{fig:open-vs-closed-dim}
%    \end{figure}

\begin{figure}[ht] 
    \centering
    \begin{tikzpicture}[scale=0.35]
    %\draw[help lines] (-10,-8) grid (10,8);
    \draw[fill=blue, opacity=0.4] (0,7)--(-9,-6)--(-5,-6)--(0,7);
    \draw[fill=red, opacity=0.2] (0,7)--(-2,-6)--(2,-6)--(0,7);
    \draw[fill=violet, opacity=0.1] (0,7)--(9,-6)--(5,-6)--(0,7);
    \draw[fill=black, opacity=0.2] (-9,-6)--(9,-6)--(6.9,-3)--(-6.9,-3);
    \draw[very thick, line cap = round] (0,7)--(-9,-6)--(-5,-6)--(0,7);
    \draw[very thick, line cap = round] (0,7)--(-2,-6)--(2,-6)--(0,7);
    \draw[very thick, line cap = round] (0,7)--(9,-6)--(5,-6)--(0,7);
    \draw[very thick] (-9,-6)--(9,-6)--(6.9,-3)--(-6.9,-3)--cycle;
    \node[] at (7,2) {\Large{$\emptyset$}};
    \node[] at (-6,-4.5) {\Large{14}};
    \node[] at (-3,-4.5) {\Large{4}};
    \node[] at (0,-4.5) {\Large{24}};
    \node[] at (0, -7) {{\color{red} $U_2$ }};
    \node[] at (3,-4.5) {\Large{4}};
    \node[] at (6,-4.5) {\Large{34}};
    \node[] at (9.5, -4.5) {{\color{black} $U_4$ }};
    \node[] at (-4, 0) {\Large{1}};
    \node[] at (-6, 0) {{\color{blue} $U_1$ }};
    \node[] at (0,0) {\Large{2}};
    \node[] at (4,0) {\Large{3}};
    \node[] at (6, 0) {{\color{violet} $U_3$ }};
    \node[] at (1.6, 7) {\Large{123}};
    \end{tikzpicture}
    \caption{A closed-convex realization of the code in~\eqref{eq:code-example} ~(cf.~\cite[Figure~2]{non-monotonicity}).}
    \label{fig:open-vs-closed-dim}
 \end{figure}
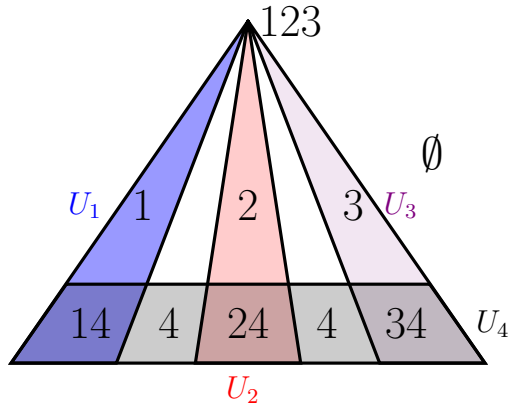

\begin{example}[Non-equality of open and closed embedding dimensions] \label{ex:open-dim-not-equal-closed-dim}
The following code is both open-convex and closed-convex:
\begin{align} \label{eq:code-example}
\mathcal{C}_\theta ~:=~
\{ 
{\bf 123}, {\bf 14}, {\bf 24}, {\bf 34}, 1,2,3,4, \emptyset
\}~.
\end{align}
Indeed, a closed-convex realization in $\mathbb{R}^2$ is shown in Figure~\ref{fig:open-vs-closed-dim}, 
and an open-convex realization in $\mathbb{R}^3$ 
appears 
in~\cite[Example~5.5]{jeffs2019embedding}. 
We claim that such convex realizations do not exist in lower dimensions, that is, 
$\cdim(\mathcal{C}_\theta)=2$ and $\odim(\mathcal{C}_\theta)=3$.  First, the open embedding dimension was shown in~\cite[Table 2]{what-makes}.  
% in that paper, it is the minimal code of L24
Next, it is straightforward to check that $\mathcal{C}_\theta$ does not have a closed-convex realization in $\mathbb{R}$ (see the results in~\cite{zvi-yan}).
\end{example}

\begin{remark} \label{rem:differ-emb-dim}
The open and closed embedding dimensions can differ by any amount.  
In Section~\ref{sec:sunflower}, will see ``sunflower codes'' $S_n$ 
with closed embedding dimension at most $3$ (Theorem~\ref{thm:sunflower-code}), but open embedding dimension equal to $n$~\cite[\S 5]{jeffs2019embedding}.  As for codes with larger 
closed embedding dimension than open, 
 we posed this question in an earlier version of this work, and, notably, 
Jeffs has now constructed such codes~\cite{open-closed-nondeg}.

%we will pose this question in the next subsection (see Question~\ref{q:open-dim-less-than-nondeg-dim} and the following discussion).
\end{remark}

\subsection{Nondegenerate realizations of neural codes} \label{sec:nondeg}
We recall the following definition introduced by Cruz {\em et al.}~\cite{cruz2019open}.

\begin{definition}    \label{def:nondegen}
%        A realization 
A collection of subsets 
$\fullreal$ of $\mathbb{R}^d$ is \emph{nondegenerate} if:
        \begin{enumerate}[label=(\roman*)]
            \item 
            for every nonempty open set 
            $S_o \subseteq \mathbb{R}^d$ 
            and every nonempty atom
            $A^\mathcal{U}_{\sigma}$,
            if the intersection
            $A^\mathcal{U}_{\sigma} \cap S_o$
            %\neq \emptyset$, 
            is nonempty, then the interior is also nonempty: ${\rm int}(A^\mathcal{U}_{\sigma}\cap S_o) \neq \emptyset$; and
            \item for all nonempty $\sigma \subseteq [n]$,
            the following containment holds: $\left(\bigcap_{i \in \sigma} \partial U_i\right) \subseteq \partial \left( \bigcap_{i \in \sigma}U_i \right)$.
        \end{enumerate}
\end{definition}

The following result is due to Cruz {\em et al.}~\cite[Lemma 2.11]{cruz2019open}.      \begin{lemma}        \label{cruzlemmadegen}
Let $\fullreal$ be a collection of convex sets in $\mathbb{R}^d$.  Then:
        \begin{enumerate}[label=(\roman*)]
            \item if every $U_i$ is open and $\mathcal{U}$ satisfies Definition~\ref{def:nondegen}$(ii)$, then $\calU$ also satisfies Definition~\ref{def:nondegen}$(i)$.
            \item if every $U_i$ is closed and $\mathcal{U}$ satisfies Definition \ref{def:nondegen}$(i)$, then $\calU$ also satisfies Definition \ref{def:nondegen}$(ii)$.
        \end{enumerate}
    \end{lemma}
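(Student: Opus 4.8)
The plan is to treat the two parts by quite different means: part~(ii) is a short point-set argument, while part~(i) rests on a separating-hyperplane computation that is the only genuinely nontrivial step.

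For part~(ii), let $\mathcal{U}$ be a collection of closed convex sets satisfying Definition~\ref{def:nondegen}(i), let $\sigma\subseteq[n]$ be nonempty, and suppose $\bigcap_{i\in\sigma}\partial U_i\neq\emptyset$ (otherwise there is nothing to prove); fix a point $x$ in it. Since each $U_i$ is closed, $\partial U_i\subseteq U_i$, so $x\in\bigcap_{i\in\sigma}U_i$, a closed set, whence $x\in{\rm cl}\bigl(\bigcap_{i\in\sigma}U_i\bigr)$. Since also $x\notin{\rm int}(U_i)$ for each $i\in\sigma$, and ${\rm int}\bigl(\bigcap_{i\in\sigma}U_i\bigr)\subseteq{\rm int}(U_i)$, we get $x\notin{\rm int}\bigl(\bigcap_{i\in\sigma}U_i\bigr)$, and therefore $x\in\partial\bigl(\bigcap_{i\in\sigma}U_i\bigr)$. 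This is exactly Definition~\ref{def:nondegen}(ii). (Neither convexity nor hypothesis~(i) is actually needed for this direction.)

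For part~(i), let $\mathcal{U}$ be a collection of open convex sets satisfying Definition~\ref{def:nondegen}(ii). Fix a nonempty open $S_o$, a nonempty atom $\atomMacro{\calU}{\sigma}$ with $\atomMacro{\calU}{\sigma}\cap S_o\neq\emptyset$, and a point $x\in\atomMacro{\calU}{\sigma}\cap S_o$. Set $W:=S_o\cap\bigcap_{i\in\sigma}U_i$, an open neighborhood of $x$; then $\atomMacro{\calU}{\sigma}\cap S_o=W\smallsetminus\bigcup_{j\notin\sigma}U_j\supseteq W\smallsetminus\bigcup_{j\notin\sigma}{\rm cl}(U_j)$, and the right-hand set is open, so it suffices to produce one point of $W$ lying outside ${\rm cl}(U_j)$ for every $j\notin\sigma$. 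Put $J:=\{j\notin\sigma:x\in{\rm cl}(U_j)\}$; since $x\notin U_j$ for $j\notin\sigma$, this equals $\{j\notin\sigma:x\in\partial U_j\}$. If $J=\emptyset$, shrinking $W$ to avoid the finitely many closed sets ${\rm cl}(U_j)$, $j\notin\sigma$ (none of which contains $x$), finishes the job, so assume $J\neq\emptyset$. For each $j\in J$ the set $U_j$ is a nonempty proper open convex set with $x\in\partial U_j$ (if $U_j$ were empty or all of $\mathbb{R}^d$ then $\partial U_j=\emptyset$), so by the supporting hyperplane theorem there is $u_j\neq0$ with $\langle u_j,y-x\rangle<0$ for all $y\in U_j$, the strict inequality coming from openness of $U_j$.

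The hard part — and the place where the hypothesis is used — is to find a direction $v$ with $\langle u_j,v\rangle>0$ for every $j\in J$. Given such a $v$, for small $t>0$ the point $x+tv$ lies in $W$, and $\langle u_j,(x+tv)-x\rangle=t\langle u_j,v\rangle>0$ keeps it outside ${\rm cl}(U_j)$ for $j\in J$; for $j\notin\sigma$ with $j\notin J$ we have $x\notin{\rm cl}(U_j)$, so shrinking $t$ keeps $x+tv$ outside ${\rm cl}(U_j)$ as well, and then $x+tv$ lies in the open set $W\smallsetminus\bigcup_{j\notin\sigma}{\rm cl}(U_j)\subseteq\atomMacro{\calU}{\sigma}\cap S_o$, giving Definition~\ref{def:nondegen}(i). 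To obtain $v$ I would apply Gordan's theorem to $\{u_j\}_{j\in J}$: either such a $v$ exists, or there are $\lambda_j\geq0$, not all zero, with $\sum_{j\in J}\lambda_j u_j=0$. In the second case set $J':=\{j\in J:\lambda_j>0\}$, a nonempty subset of $[n]$, and apply Definition~\ref{def:nondegen}(ii) to $\sigma'=J'$: since $x\in\bigcap_{j\in J'}\partial U_j\subseteq\partial\bigl(\bigcap_{j\in J'}U_j\bigr)\subseteq{\rm cl}\bigl(\bigcap_{j\in J'}U_j\bigr)$, there is a sequence $y_k\to x$ with $y_k\in U_j$ for every $j\in J'$. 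But then $\langle u_j,y_k-x\rangle<0$ for each $j\in J'$, and the $\lambda_j$-weighted sum gives $0=\bigl\langle\sum_{j\in J'}\lambda_j u_j,\,y_k-x\bigr\rangle<0$, a contradiction. Hence the desired $v$ exists, and the proof is complete. The entire difficulty is concentrated in this escape-direction step, which is precisely why convexity (supplying the supporting hyperplanes) and condition~(ii) are both needed; the rest is bookkeeping with closures.
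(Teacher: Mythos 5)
The paper does not actually prove Lemma~\ref{cruzlemmadegen}: it states the result and attributes it to Cruz \emph{et al.}\ (their Lemma~2.11), so there is no in-paper proof for you to be compared against. Judged on its own merits, your argument is correct. The observation in part~(ii) that closedness alone gives $\partial U_i = U_i\smallsetminus{\rm int}(U_i)$, from which membership in $\partial(\cap_{i\in\sigma} U_i)$ follows without invoking convexity or Definition~\ref{def:nondegen}(i), is accurate. For part~(i), the reduction to producing a point of the open set $W\smallsetminus\bigcup_{j\notin\sigma}{\rm cl}(U_j)$ is clean, the supporting-hyperplane step correctly yields strict separation because each $U_j$ is open, and the Gordan-alternative dichotomy is exactly the right tool: if no common escape direction $v$ exists, the resulting nontrivial nonnegative dependence $\sum_{j\in J'}\lambda_j u_j=0$ together with Definition~\ref{def:nondegen}(ii) applied to $\sigma'=J'$ produces a sequence in $\cap_{j\in J'}U_j$ converging to $x$, whose weighted inner products force $0<0$. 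The only thing I would add for completeness is the one-line remark (which you implicitly use) that $x\in{\rm cl}\bigl(\cap_{j\in J'}U_j\bigr)$ forces this intersection to be nonempty, so the approximating sequence actually exists. Whether this is the same route Cruz \emph{et al.}\ took I cannot say from the present paper alone, but your proof stands on its own; the Gordan-based escape-direction argument is a natural and economical way to locate where the hypothesis~(ii) enters.
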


\begin{example} \label{ex:deg-vs-nondeg}
For the code $\code =\{ {\bf 1} ,{\bf 2}, \emptyset \}$, the open-convex realization in $\mathbb{R}$ given by the intervals $U_1=(-1,0)$ and $U_2=(0,1)$ is degenerate, but $U_1=(-1,0)$ and $U_2=(1,2)$ define a realization that is nondegenerate.  Similarly, for $\code = \{ {\bf 12}, 1,2, \emptyset \}$, the closed-convex realization of $U_1= [-1,0]$ and $U_2= [0,1]$ is degenerate, while $U_1= [-1,1]$ and $U_2=[0,2]$ define a realization that is nondegenerate. Finally, the closed-convex realization shown in Figure~\ref{fig:open-vs-closed-dim} is degenerate, as the atom of the codeword $123$ has empty interior.
\end{example}

The following result is also due to Cruz {\em et al.}~\cite[Theorem 2.12]{cruz2019open}.
   \begin{proposition}
    \label{prop:Cruz}
    Let $\fullreal$ be a nondegenerate collection of convex sets in $\mathbb{R}^d$.  Then:
        \begin{enumerate}[label=(\roman*)]
        \item  if every $U_i$ is open, then 
        $
        {\rm code}
        (\mathcal{U},\mathbb{R}^d) = {\rm code}({\rm cl}(\mathcal{U}),\mathbb{R}^d)$; and
        \item if
        every $U_i$ is closed, then $ {\rm code}(\mathcal{U},\mathbb{R}^d) = {\rm code}({\rm int} (\mathcal{U}),\mathbb{R}^d)$.
        \end{enumerate}
%    This states all codes that have a nondegenerate convex realization are both open and closed-convex.\cite{cruz2019open}
   \end{proposition}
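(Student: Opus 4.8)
The plan is to prove part~(i) directly, and then to obtain part~(ii) as a short reduction to part~(i): the point is that if $\mathcal{U}$ is a nondegenerate collection of \emph{closed} convex sets, then ${\rm int}(\mathcal{U})$ is a nondegenerate collection of \emph{open} convex sets whose componentwise closure is again $\mathcal{U}$, so part~(i) applied to ${\rm int}(\mathcal{U})$ yields part~(ii). Throughout I use that a nondegenerate $\mathcal{U}$ satisfies both conditions of Definition~\ref{def:nondegen}, and Lemma~\ref{cruzlemmadegen}.

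For part~(i), assume every $U_i$ is open and $\mathcal{U}$ is nondegenerate. For the inclusion ${\rm code}(\mathcal{U},\mathbb{R}^d)\subseteq{\rm code}({\rm cl}(\mathcal{U}),\mathbb{R}^d)$, I would start from a nonempty atom $A^{\mathcal{U}}_{\sigma}$, apply Definition~\ref{def:nondegen}(i) with $S_o=\mathbb{R}^d$ to get a point $x\in{\rm int}(A^{\mathcal{U}}_{\sigma})$, and note that a ball around $x$ lies in $A^{\mathcal{U}}_{\sigma}$: this forces $x\in\bigcap_{i\in\sigma}U_i\subseteq\bigcap_{i\in\sigma}{\rm cl}(U_i)$ and, since the ball misses every $U_j$ with $j\notin\sigma$, forces $x\notin{\rm cl}(U_j)$ there, so $x\in A^{{\rm cl}(\mathcal{U})}_{\sigma}$. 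For the reverse inclusion the crucial point is the identity
\[
\bigcap_{i\in\sigma}{\rm cl}(U_i) ~=~ {\rm cl}\!\Big(\bigcap_{i\in\sigma}U_i\Big) \qquad\text{for every }\sigma\subseteq[n].
\]
The inclusion $\supseteq$ is automatic; for $\subseteq$ take $x$ in the left-hand side, partition $\sigma=\sigma_0\sqcup\sigma_1$ according to whether $x\in\partial U_i$ or $x\in U_i$, and, when $\sigma_0\neq\emptyset$, invoke Definition~\ref{def:nondegen}(ii) to get $x\in\partial\big(\bigcap_{i\in\sigma_0}U_i\big)\subseteq{\rm cl}\big(\bigcap_{i\in\sigma_0}U_i\big)$; approximating $x$ by points of $\bigcap_{i\in\sigma_0}U_i$ and using that the finitely many open sets $U_i$ with $i\in\sigma_1$ are neighborhoods of $x$ then puts these approximants in $\bigcap_{i\in\sigma}U_i$ for large index, so $x\in{\rm cl}\big(\bigcap_{i\in\sigma}U_i\big)$. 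Granting this identity, given $x\in A^{{\rm cl}(\mathcal{U})}_{\sigma}$ the closed set $\bigcup_{j\notin\sigma}{\rm cl}(U_j)$ misses $x$, so a small ball $B\ni x$ avoids it; since $x\in{\rm cl}\big(\bigcap_{i\in\sigma}U_i\big)$, the ball $B$ meets $\bigcap_{i\in\sigma}U_i$, and any such point lies in $A^{\mathcal{U}}_{\sigma}$.

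For part~(ii), assume every $U_i$ is closed and $\mathcal{U}$ is nondegenerate. First I record a consequence of Definition~\ref{def:nondegen}(i): every nonempty intersection $\bigcap_{i\in\sigma}U_i$ has nonempty interior (a point $x$ of it lies in the nonempty atom $A^{\mathcal{U}}_{\tau}$ for $\tau=\{j:x\in U_j\}\supseteq\sigma$, and Definition~\ref{def:nondegen}(i) with $S_o=\mathbb{R}^d$ gives ${\rm int}(A^{\mathcal{U}}_{\tau})\neq\emptyset$, hence ${\rm int}\big(\bigcap_{i\in\sigma}U_i\big)\neq\emptyset$). Taking $\sigma=\{i\}$ shows $U_i\neq\emptyset\Rightarrow{\rm int}(U_i)\neq\emptyset$, and therefore ${\rm cl}({\rm int}(U_i))=U_i$ for every $i$ (using the standard fact that a closed convex set with nonempty interior is the closure of its interior, and the trivial case $U_i=\emptyset$). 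Next I verify that ${\rm int}(\mathcal{U})$ satisfies Definition~\ref{def:nondegen}(ii): for nonempty $\sigma$ with all ${\rm int}(U_i)\neq\emptyset$ we have $\partial({\rm int}(U_i))=\partial(U_i)$, and $\partial\big(\bigcap_{i\in\sigma}{\rm int}(U_i)\big)=\partial\big({\rm int}(\bigcap_{i\in\sigma}U_i)\big)=\partial\big(\bigcap_{i\in\sigma}U_i\big)$ whenever $\bigcap_{i\in\sigma}U_i\neq\emptyset$ (again using interior-nonemptiness above), so the required containment is exactly Definition~\ref{def:nondegen}(ii) for $\mathcal{U}$; the remaining cases are vacuous because $\bigcap_{i\in\sigma}\partial U_i\subseteq\bigcap_{i\in\sigma}U_i$. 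By Lemma~\ref{cruzlemmadegen}(i), ${\rm int}(\mathcal{U})$ is then a nondegenerate collection of open convex sets, so part~(i) gives ${\rm code}({\rm int}(\mathcal{U}),\mathbb{R}^d)={\rm code}({\rm cl}({\rm int}(\mathcal{U})),\mathbb{R}^d)$, and since ${\rm cl}({\rm int}(U_i))=U_i$ for every $i$ this equals ${\rm code}(\mathcal{U},\mathbb{R}^d)$.

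The main obstacle is the closure identity $\bigcap_{i\in\sigma}{\rm cl}(U_i)={\rm cl}\big(\bigcap_{i\in\sigma}U_i\big)$ in part~(i): it fails for general convex sets (for instance two open half-planes that together cover $\mathbb{R}^2$ except their common bounding line, whose closures intersect in that line while the sets themselves are disjoint), and it is precisely the boundary case, where $x$ lies in $\partial U_i$ for some but not all $i\in\sigma$, that forces the use of the containment in Definition~\ref{def:nondegen}(ii). Once part~(i) and this identity are in hand, part~(ii) is routine; the only care needed there is the bookkeeping around components $U_i$ with empty interior, which nondegeneracy forces to be empty.
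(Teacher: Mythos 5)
The paper does not prove Proposition~\ref{prop:Cruz} itself; it is stated as a citation to Cruz {\em et al.}~\cite[Theorem~2.12]{cruz2019open}, so there is no in-paper proof to compare against. Your blind proof is correct and self-contained. Part~(i) is proved directly, with the heart being the closure identity $\bigcap_{i\in\sigma}{\rm cl}(U_i)={\rm cl}(\bigcap_{i\in\sigma}U_i)$, whose nontrivial inclusion you correctly reduce to Definition~\ref{def:nondegen}(ii) applied to the index set $\sigma_0$ of neurons where $x$ sits on the boundary, followed by an approximation argument using that the $U_i$, $i\in\sigma_1$, are open neighborhoods of $x$; the edge cases ($\sigma_0=\emptyset$, or the intersection being empty) are handled. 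The reduction of part~(ii) to part~(i) is also sound: you extract from Definition~\ref{def:nondegen}(i) that every nonempty intersection of the closed convex $U_i$ has nonempty interior (so empty-interior sets are empty, and ${\rm cl}({\rm int}(U_i))=U_i$ in all cases), verify that ${\rm int}(\mathcal{U})$ inherits Definition~\ref{def:nondegen}(ii), and invoke Lemma~\ref{cruzlemmadegen}(i) to upgrade this to full nondegeneracy before applying part~(i). This ``prove (i), bootstrap (ii)'' route is a nice structural simplification; whether it matches Cruz {\em et al.}'s original argument I cannot verify from this paper, but the argument as written has no gaps.
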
 

In the next section,
we prove a converse to Proposition~\ref{prop:Cruz} (see Theorem~\ref{thm:nondeg-summary}).

\begin{proposition}    \label{prop:OndvCnd}
Let $\code$ be a neural code.  
Let $d_o$ (respectively, $d_c$) be the minimum value (or~$\infty$ if no such value exists) such that
$\code = {\rm code}(\mathcal{U}, \mathbb{R}^{d_o})$ 
(respectively, $\code = {\rm code}(\mathcal{U}, \mathbb{R}^{d_c})$) for
some \uline{nondegenerate} collection of convex open (respectively, closed) sets $\mathcal U$ in $\mathbb{R}^{d_o}$ (respectively, $\mathbb{R}^{d_c}$).
Then $d_o=d_c$.
\end{proposition}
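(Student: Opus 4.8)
The plan is to show the two inequalities $d_o \le d_c$ and $d_c \le d_o$ by a symmetric argument, using Proposition~\ref{prop:Cruz} together with Lemma~\ref{cruzlemmadegen} to pass between open and closed realizations while controlling the dimension. I will spell out $d_c \le d_o$; the other direction is identical with ``open''/``closed'' and ``interior''/``closure'' swapped. First I would take a nondegenerate collection of convex open sets $\calU = \{U_i\}_{i=1}^n$ in $\mathbb{R}^{d_o}$ with $\code = {\rm code}(\calU, \mathbb{R}^{d_o})$. The natural candidate for a closed realization in the same dimension is ${\rm cl}(\calU) = \{{\rm cl}(U_i)\}_{i=1}^n$: each ${\rm cl}(U_i)$ is convex and closed, and it lives in $\mathbb{R}^{d_o}$. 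By Proposition~\ref{prop:Cruz}(i), since $\calU$ is nondegenerate, ${\rm code}({\rm cl}(\calU), \mathbb{R}^{d_o}) = {\rm code}(\calU, \mathbb{R}^{d_o}) = \code$. So ${\rm cl}(\calU)$ realizes $\code$ by convex closed sets in $\mathbb{R}^{d_o}$.

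The remaining issue — and the crux of the argument — is that the definition of $d_c$ demands a \emph{nondegenerate} closed realization, so I must check that ${\rm cl}(\calU)$ is itself nondegenerate. Here Lemma~\ref{cruzlemmadegen}(ii) does the work: for a collection of convex \emph{closed} sets, condition (i) of Definition~\ref{def:nondegen} implies condition (ii), so it suffices to verify Definition~\ref{def:nondegen}(i) for ${\rm cl}(\calU)$. The key observation is that, because each $U_i$ is open and convex, ${\rm int}({\rm cl}(U_i)) = U_i$ (a standard fact for convex sets; if $U_i = \emptyset$ this holds trivially, and one should note the degenerate case where $U_i$ has empty interior cannot occur for a nonempty open set). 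Consequently, for any $\sigma$, the atom $A^{{\rm cl}(\calU)}_{\sigma}$ and the atom $A^{\calU}_{\sigma}$ differ only on a set contained in $\bigcup_i \partial U_i$, which is closed with empty interior; more precisely $A^{\calU}_{\sigma} \subseteq A^{{\rm cl}(\calU)}_{\sigma}$ and ${\rm int}(A^{{\rm cl}(\calU)}_{\sigma}) \subseteq A^{\calU}_{\sigma}$ up to such boundary sets, so the two atoms have the same interior. Now given a nonempty open $S_o$ with $A^{{\rm cl}(\calU)}_{\sigma} \cap S_o \neq \emptyset$, I would argue that $A^{\calU}_{\sigma} \cap S_o \neq \emptyset$: if not, then $A^{{\rm cl}(\calU)}_{\sigma} \cap S_o$ lies in the nowhere-dense set $\bigcup_i \partial U_i$, so $A^{{\rm cl}(\calU)}_{\sigma}\cap S_o$ has empty interior — but one can then shrink $S_o$ slightly and use nondegeneracy of the original $\calU$ to derive a contradiction, since any point of $A^{{\rm cl}(\calU)}_{\sigma}\cap S_o$ is a limit of points of $A^{\calU}_{\sigma}$. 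Once $A^{\calU}_{\sigma}\cap S_o \neq \emptyset$, nondegeneracy of $\calU$ gives ${\rm int}(A^{\calU}_{\sigma}\cap S_o) \neq \emptyset$, and since this set is contained in $A^{{\rm cl}(\calU)}_{\sigma} \cap S_o$, condition (i) follows for ${\rm cl}(\calU)$.

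This establishes that ${\rm cl}(\calU)$ is a nondegenerate convex closed realization of $\code$ in $\mathbb{R}^{d_o}$, hence $d_c \le d_o$. Running the same argument in reverse — starting from a nondegenerate convex closed realization $\calV$ in $\mathbb{R}^{d_c}$, passing to ${\rm int}(\calV)$, invoking Proposition~\ref{prop:Cruz}(ii) to see it realizes $\code$, and Lemma~\ref{cruzlemmadegen}(i) plus the identity ${\rm cl}({\rm int}({\rm cl}(U_i))) = {\rm cl}(U_i)$ for convex closed $U_i$ to see it is nondegenerate — yields $d_o \le d_c$, and therefore $d_o = d_c$. I expect the main obstacle to be the bookkeeping in the previous paragraph: carefully justifying that the atoms of $\calU$ and ${\rm cl}(\calU)$ share the same interior and that nondegeneracy is inherited through the closure operation, being attentive to empty sets and to the case distinctions in Definition~\ref{def:nondegen}. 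The dimension never changes in this process, which is precisely why the two quantities coincide.
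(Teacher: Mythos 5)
Your instinct that the crux is verifying that ${\rm cl}(\calU)$ is itself nondegenerate is exactly right, and it is a genuine subtlety: the paper's own proof is a two-sentence citation of Proposition~\ref{prop:Cruz}, which only asserts that ${\rm cl}(\calU)$ realizes the same code, and leaves the nondegeneracy of ${\rm cl}(\calU)$ (and, symmetrically, of ${\rm int}(\calV)$) to the reader. Your plan of reducing, via Lemma~\ref{cruzlemmadegen}(ii), to checking Definition~\ref{def:nondegen}(i) for ${\rm cl}(\calU)$, and then comparing the atoms of $\calU$ and ${\rm cl}(\calU)$, is the right way to close that gap, and your identification that ${\rm int}\bigl(A^{{\rm cl}(\calU)}_{\sigma}\bigr) = {\rm int}\bigl(A^{\calU}_{\sigma}\bigr)$ (since ${\rm int}({\rm cl}(U_i))=U_i$ and ${\rm cl}(U_j)={\rm cl}(U_j)$ for open convex $U_j$) is a correct and useful observation.

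However, the step you actually need --- that $A^{{\rm cl}(\calU)}_{\sigma} \cap S_o \neq \emptyset$ implies $A^{\calU}_{\sigma} \cap S_o \neq \emptyset$ --- is asserted (``I would argue,'' ``one can then shrink $S_o$ slightly'') rather than proved, and the route you sketch is more complicated than necessary. The clean argument is a segment argument that exploits convexity. Given $x \in A^{{\rm cl}(\calU)}_{\sigma} \cap S_o$: since $x \notin {\rm cl}(U_j)$ for all $j\notin\sigma$ and these are closed, and $S_o$ is open, there is an open ball $B$ around $x$ with $B\subseteq S_o$ and $B\cap {\rm cl}(U_j)=\emptyset$ for every $j\notin\sigma$. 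Because ${\rm code}({\rm cl}(\calU),\mathbb{R}^{d_o}) = {\rm code}(\calU,\mathbb{R}^{d_o})$ by Proposition~\ref{prop:Cruz}(i) and $A^{{\rm cl}(\calU)}_{\sigma}\neq\emptyset$, the atom $A^{\calU}_{\sigma}$ is nonempty; pick $z$ in it, so $z\in U_i$ for all $i\in\sigma$. Since $x\in{\rm cl}(U_i)$ and $U_i$ is open and convex, every point of the segment $\overline{xz}$ other than $x$ lies in $U_i$; taking such points close enough to $x$ they lie in $B$, hence in $A^{\calU}_{\sigma}\cap S_o$. Nondegeneracy of $\calU$ then gives ${\rm int}(A^{\calU}_{\sigma}\cap S_o)\neq\emptyset$, and this set equals ${\rm int}(A^{{\rm cl}(\calU)}_{\sigma}\cap S_o)$ by the interior identity above. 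Also, a small correction: your intermediate claim $A^{\calU}_{\sigma}\subseteq A^{{\rm cl}(\calU)}_{\sigma}$ is false as stated --- a point of $A^{\calU}_{\sigma}$ lying on $\partial U_j$ for some $j\notin\sigma$ belongs to ${\rm cl}(U_j)$ and is excluded from $A^{{\rm cl}(\calU)}_{\sigma}$; this does not harm the conclusion about interiors but should not be asserted.
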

    
    \begin{proof}
    The first implication of Proposition~\ref{prop:Cruz} implies the inequality $d_o \geq d_c$, and the second implication implies the opposite inequality, $d_0 \leq d_c$. 
%    Using the results from Lemma \ref{prop:Cruz}, we will provide by induction that $\dim_{C_{nd}}(\mathcal{C})= \dim_{O_{nd}} (\mathcal{C}) =d$. Assume $\dim_{C_{nd}}(\mathcal{C})=d_{C_{nd}}$ and $\dim_{O_{nd}}(\mathcal{C})=d_{O_{nd}}$. There must exist some $(\mathcal{U},\mathbb{R}^{d_{O_{nd}}})$ of the code $\mathcal{C}$. Using Lemma \ref{prop:Cruz}, we get that $\mathcal{C}={\rm code}({\rm cl}(\mathcal{U}),\mathbb{R}^{d_{O_{nd}}})$, which says there exist a closed realization of $\mathcal{C}$ in $d_{O_{nd}}$ dimensions. So, $d_{O_{nd}} \geq d_{C_{nd}}$. There must also exist some $(\mathcal{U},\mathbb{R}^{d_{C_{nd}}})$ of the code $\mathcal{C}$. Using Lemma \ref{prop:Cruz} once more, we get an open realization of $\mathcal{C}$ in $d_{C_{nd}}$ dimensions; $\mathcal{C}={\rm code}({\rm int}(\mathcal{U}),\mathbb{R}^{d_{C_{nd}}})$. So, it must also be the case that $d_{O_{nd}} \leq d_{C_{nd}}$. Thus, we can conclude that $\dim_{C_{nd}}(\mathcal{C})= \dim_{O_{nd}}(\mathcal{C})=d$.
    \end{proof}

In light of Proposition~\ref{prop:OndvCnd}, we introduce the following definition, which captures the minimal dimension in which a code has a nondegenerate open-convex or closed-convex realization.

\begin{definition} \label{def:nondeg-dim}
The {\em nondegenerate embedding dimension} of a neural code $\code$, denoted by $\nondegdim(\code)$, is the value of $d_o=d_c$ in Proposition~\ref{prop:OndvCnd}.
\end{definition}

\begin{proposition}
    \label{prop:dvnd}
For every neural code $\code$,
the following inequalities hold:
\begin{align}\label{eq:dim-inequalities}
    \odim(\code) \leq 
    \nondegdim(\code)  
    \quad \quad {\rm and} \quad \quad
    \cdim(\code) \leq 
    \nondegdim(\code)~.
\end{align}
\end{proposition}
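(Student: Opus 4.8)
The plan is to show that any nondegenerate open-convex (or closed-convex) realization is in particular an open-convex (respectively closed-convex) realization, so the minimum dimension over \emph{all} realizations can only be at most the minimum dimension over nondegenerate ones. Concretely, suppose $\nondegdim(\code) = d$. By Definition~\ref{def:nondeg-dim} and Proposition~\ref{prop:OndvCnd}, there is a nondegenerate collection $\calU = \{U_i\}_{i=1}^n$ of convex \emph{open} sets in $\mathbb{R}^d$ with $\code = {\rm code}(\calU, \mathbb{R}^d)$. This $\calU$ is a fortiori a collection of convex open sets realizing $\code$ in $\mathbb{R}^d$ (the nondegeneracy condition is an extra property, not a weakening), so by Definition~\ref{def:convex} we get $\odim(\code) \leq d = \nondegdim(\code)$. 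Symmetrically, using the \emph{closed} nondegenerate realization guaranteed by Proposition~\ref{prop:OndvCnd} in the same dimension $d$, we obtain $\cdim(\code) \leq d = \nondegdim(\code)$.

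The only subtlety is the edge case $\nondegdim(\code) = \infty$, which happens exactly when no nondegenerate convex realization exists in any dimension; then both inequalities in~\eqref{eq:dim-inequalities} read ``finite-or-infinite $\leq \infty$'' and hold trivially. (Of course, a code could still be open-convex or closed-convex without having a nondegenerate realization, in which case the left-hand sides are finite and the claim is still vacuously fine.) So the statement is insensitive to whether $\nondegdim(\code)$ is finite.

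Thus there is essentially no obstacle here: the content is entirely bookkeeping about which collections of sets are admissible in each of the three ``embedding dimension'' minimizations, and the fact that the nondegenerate-realization minimization ranges over a \emph{subset} of the collections allowed in the open (and closed) embedding-dimension minimizations. I would write the proof in two short sentences, invoking Proposition~\ref{prop:OndvCnd} to produce the nondegenerate realizations in dimension $\nondegdim(\code)$ and then simply noting that each is an admissible realization for the corresponding one-sided embedding dimension.
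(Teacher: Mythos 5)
Your argument is exactly the paper's: a nondegenerate open (resp. closed) convex realization is in particular an open (resp. closed) convex realization, so the minimum over all realizations is bounded by the minimum over nondegenerate ones. The paper states this in one sentence; your version simply spells out the invocation of Proposition~\ref{prop:OndvCnd} and the $\infty$ edge case, but the reasoning is the same.
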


\begin{proof}
Every nondegenerate, open-convex (respectively, closed-convex) realization of $\code$ is also an open-convex (respectively, closed-convex) realization of $\code$. 
\end{proof}

We will investigate when the inequalities~\eqref{eq:dim-inequalities} are equalities (for instance, see Theorem~\ref{thm:codes-on-up-to-3}).  The following examples show that these inequalities are not equalities in general.

\begin{example} \label{ex:nondeg-dim-not-equal}
We saw that the code 
$\code_\theta$
in Example~\ref{ex:open-dim-not-equal-closed-dim} satisfies 
$\odim(\mathcal{C}_\theta) = 3$ and $ \cdim(\mathcal{C}_\theta) = 2$.  
We claim that $\nondegdim(\mathcal{C}_\theta) =3$.  
Indeed, viewing the $3$-dimensional realization of $\code_\theta$ in~\cite[Example~5.5]{jeffs2019embedding} as a closed realization, it is easy to check that Definition~\ref{def:nondegen}(i) holds, and so by Lemma~\ref{cruzlemmadegen}(ii), the realization is nondegenerate.  Now Proposition~\ref{prop:dvnd} implies that
 $\nondegdim(\mathcal{C}_\theta) =3$.
\end{example}

Other codes with $\cdim(\code) <    \nondegdim(\code)$ are found in~\cite[\S6]{jeffs2019embedding}
and~\cite[Lemma 3.3]{non-monotonicity}.

Next, we consider some related questions.
\begin{question} \label{q:open-vs-closed-vs-nondeg} ~
\begin{enumerate}
    \item Is there a code $\code$ with $\odim(\code) <    \nondegdim(\code) < \infty$?
    \item Is there a code $\code$ with $\odim(\code) < \infty$ and $\cdim(\code) < \infty$, but $\nondegdim(\code) = \infty$?
\end{enumerate}
\end{question}

We posed these questions in an earlier version of this work, and recently Jeffs resolved both
questions affirmatively~\cite{open-closed-nondeg}.  Notably, his constructions rely on our theory of rigid structures, which appears later in this article (Section~\ref{sec:rigid}).

%We pose the following question:
%\begin{question} \label{q:open-dim-less-than-nondeg-dim}
%Is there a code $\code$ with $\odim(\code) <    \nondegdim(\code) < \infty$?
%\end{question}
%
%Jeffs posed a related question~\cite[Question~10.1]{jeffs2019embedding}: 
%{\em 
%Is there a code $\code$ with  $\odim(\code) <    \cdim (\code) < \infty$?
%}  Jeffs also showed that if such a code $\code$ exists, it must be missing some intersection of codewords in $\code$~\cite{jeffs2019embedding}.

In Question~\ref{q:open-vs-closed-vs-nondeg}(1), if we remove the requirement that the nondegenerate embedding dimension is %be 
finite, then we can find several such codes, as in the following example.

\begin{example} \label{ex:infinite-deg-dim}
For codes $\code$ that are open-convex but not closed-convex (such as those in Example~\ref{ex:goldrup-phillipson})
, $\odim(\code) < \infty$ while $\nondegdim(\code)=\infty$.
Similarly, for codes that are closed-convex but not open-convex (as in Example~\ref{ex:closed-not-open}),  
$\cdim(\code) < \infty$ but $\nondegdim(\code)=\infty$.
\end{example}

%Finally, we ask whether nondegenerate convex realizations always exist for codes that are both open-convex and closed-convex.

%\begin{question} \label{q:nondeg-realization-exists}
%Is there a code $\code$ with $\odim(\code) < \infty$ and $\cdim(\code) < \infty$, but $\nondegdim(\code) = \infty$?
%\end{question}

%Questions~\ref{q:open-dim-less-than-nondeg-dim} and~\ref{q:nondeg-realization-exists} are both open.

\subsection{The neural ideal and its canonical form} \label{sec:neural-ideal}
In this subsection, we introduce neural ideals, which capture all the information in a neural code.  
Neural ideals have been harnessed to study convexity and other properties of neural codes~\cite{ curto2018algebraic, curto2013neural, GB, neural-ideal-homom, neural-ideal-sage,factor-complex}.

A {\em pseudo-monomial} in $\mathbb{F}_2[x_1, x_2, \dots, x_n]$ is a polynomial of the form 
\[
f~=~ \prod_{i\in \sigma} x_i\prod_{j\in\tau}(1+x_j)~,
\]
where $\sigma,\tau\subseteq[n]$ with $\sigma\cap\tau=\emptyset$.  
Each $v\in \{0,1\}^n$ defines a pseudo-monomial $\rho_v$, which is the {\em characteristic function} for $v$ (i.e., 
$\rho_v(x)=1$ if and only if $x=v$):
\begin{align*}
\rho_v~:=~\prod_{i=1}^n(1-v_i-x_i)=\prod_{\{i \mid v_i=1\}}x_i\prod_{\{j \mid v_j=0\}}(1+x_j)~.
\end{align*}

For a codeword $\sigma \subseteq [n]$ (e.g., $\sigma=134$ with $n=4$), we let $v(\sigma) \in \{0,1\}^n$ denote the corresponding $0/1$ vector (e.g., $v(\sigma)=(1,0,1,1)$).  That is,  $v(\sigma)_i=1$ if and only if $i \in \sigma$.

\begin{definition} \label{def:neural-ideal}
Let $\code$ be a neural code on $n$ neurons. The {\em neural ideal} $J_{\code}$ of $\code$ is the ideal in $\mathbb{F}_2[x_1, x_2, \dots, x_n]$ generated by all characteristic functions
$\rho_v$ for $v\not\in \code $:
	\begin{align*}
    J_{\code}~:=~\langle \{\rho_v \mid v\not\in \code \}\rangle~.
    \end{align*}
\end{definition}

A pseudo-monomial $f$ in an ideal $J$ in $\mathbb{F}_2[x_1, x_2, \dots,x_n]$ is {\em minimal} if there does not exist a pseudo-monomial $g\in J$, where $g \neq f$,
such that $f=gh$ for some $h\in \mathbb{F}_2[x_1, x_2, \dots,x_n]$.

\begin{definition} \label{def:CF}
The {\em canonical form} of a neural ideal $J_{\code}$, which we denote by 
${\rm CF}(J_{\code})$, 
is the set of all minimal pseudo-monomials of $J_{\code}$. 
\end{definition}

The canonical form ${\rm CF}(J_{\code})$ is a generating set for the neural ideal $J_{\code}$~\cite{curto2013neural}, and there is software for efficiently computing $J_{\code}$ and ${\rm CF}(J_{\code})$~\cite{neural-ideal-sage}.  

\begin{example} \label{ex:canonical-form}
The canonical forms for the codes 
%$C6$, $C10$, and $C15$ from 
in Example~\ref{ex:goldrup-phillipson} are as follows: 
\begin{align*}
%CRUZ CODE
%Cruz=$\{123, 126, 156, 456, 345, 234, 12, 16, 56, 45, 34, 23\}$
%{\rm CF}(J_{ \CruzCode }) ~=~
%&\{x_6x_3,~x_5x_2,~(x_6+1)(x_2+1)x_1,~x_4x_1,~x_6(x_5+1)(x_1+1),~x_3(x_2+1)x_1,~\\
%&(x_4+1)x_3(x_2+1),~x_5x_3x_1,~x_5(x_4+1)x_3,~(x_6+1)x_5x_1,~(x_6+1)x_5(x_4+1),~\\
%&(x_3+1)x_2(x_1+1),~x_6x_2(x_1+1),~x_4(x_3+1)x_2,~x_6x_4x_2,~\\
%&(x_5+1)x_4(x_3+1),~x_6(x_5+1)x_4\} \\
%
%C6=$\{125,234,145,123,4,23,15,12\}$
{\rm CF}(J_{ C6 }) ~=~
&\{(x_1+1)x_5,~(x_2+1)x_3,~x_3x_5,~(x_1+1)x_2(x_3+1),~x_1x_2x_4,~x_2(x_3+1)x_4,~\\
&x_2x_4x_5,~x_1(x_2+1)(x_5+1),~x_1x_4(x_5+1),~x_1x_3x_4\}~, \\
%
%C10=$\{134,~245,~234,~135,~12,~1,~5,~34,~13,~2,~24\}$
{\rm CF}(J_{ C10 }) ~=~
&\{(x_2+1)(x_3+1)x_4,~(x_1+1)x_3(x_4+1),~x_1x_4x_5,~(x_2+1)x_4x_5,~\\
&(x_1+1)x_3x_5,~x_2x_3x_5,~x_3x_4x_5,~x_1(x_3+1)x_4,~x_1(x_3+1)x_5,~\\
&x_1x_2x_3,~x_1x_2x_4,~x_1x_2x_5,~x_2x_3(x_4+1),~x_2(x_4+1)x_5\}~, \\
%
%C15=$\{145,~125,~123,~234,~345,~23,~15,~45,~34,~12\}$
%
{\rm CF}(J_{ C15 }) ~=~
&\{x_1(x_2+1)(x_5+1),~x_1x_4(x_5+1),~(x_3+1)x_4(x_5+1),~(x_1+1)x_2(x_3+1),~\\
&x_1x_2x_4,~x_2(x_3+1)x_4,~x_2x_4x_5,~x_1x_3x_4,~(x_1+1)x_2x_5,~(x_1+1)(x_4+1)x_5,~\\
&x_1(x_2+1)x_3,~(x_2+1)x_3(x_4+1),~x_1x_3x_5,~x_2x_3x_5,~x_3(x_4+1)x_5\}~.
\end{align*}

\end{example}

Pseudo-monomials in $J_{\code}$ (and thus in the canonical form) 
can be translated into relationships among receptive fields. 
To state this result, we need the following notation for $\sigma\subseteq [n]$:
\[
x_\sigma ~:=~\prod_{i\in\sigma}x_i \quad 
\text{ and } \quad
U_\sigma ~:=~ \bigcap_{i\in\sigma} U_i~.
\]
(As mentioned earlier, recall that the empty intersection is the full space $X$.) 
The following result is due to Curto {\em et al.}~\cite[Lemma~4.2]{curto2013neural}.
\begin{lemma} \label{lem:receptivefields}
	Let $X$ be a stimulus space, let $\mathcal{U}=\{U_i\}_{i=1}^n$ be a collection of  
    subsets of $X$, and consider the code $\code = {\rm code}(\mathcal{U}, X)$. 
    Then for every pair of subsets $\sigma,\tau\subseteq [n]$, 
    $$x_\sigma\prod_{i\in\tau}(1+x_i)~\in~ J_{\code} \quad \iff \quad  U_\sigma ~\subseteq~ \bigcup_{i\in\tau}U_i~.$$
    In particular, $x_\sigma \in J_{\code} \iff U_\sigma = \emptyset $.	
\end{lemma}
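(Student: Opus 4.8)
The plan is to prove both directions of the equivalence directly from the definitions, translating the membership condition $x_\sigma\prod_{i\in\tau}(1+x_i)\in J_{\code}$ into a statement about which $0/1$ vectors are annihilated by the polynomial, and then matching that with the atom description of $\code = {\rm code}(\mathcal{U},X)$. The key observation is that $J_{\code}$ is exactly the vanishing ideal of the code, viewed as a set of points in $\{0,1\}^n$: since $J_{\code} = \langle \rho_v \mid v\notin\code\rangle$ and each $\rho_v$ is the characteristic function of $v$, a pseudo-monomial $f$ lies in $J_{\code}$ if and only if $f(v)=0$ for all $v\in\code$. (One direction is clear because each generator $\rho_v$ vanishes on all of $\code$; the other direction uses that any $f\in J_{\code}$ is an $\F_2$-combination of the $\rho_v$'s times polynomials, so $f$ restricted to $\code$ vanishes, and conversely a function vanishing on $\code$ can be written in terms of the $\rho_v$ with $v\notin\code$ — this is the standard fact that over $\F_2$ the ideal of functions vanishing on a subset of the cube is generated by the characteristic functions of the complementary points.)

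Next I would unwind what it means for $f := x_\sigma\prod_{i\in\tau}(1+x_i)$ to vanish at a point $v\in\{0,1\}^n$. Since $x_\sigma(v) = 1$ iff $\sigma\subseteq v$ (i.e.\ $v_i=1$ for all $i\in\sigma$), and $\prod_{i\in\tau}(1+x_i)(v) = 1$ iff $v_i=0$ for all $i\in\tau$ (i.e.\ $\tau\cap v=\emptyset$), we get $f(v)=1$ precisely when $\sigma\subseteq v$ and $\tau\cap v = \emptyset$. Hence $f\in J_{\code}$ iff there is no codeword $c\in\code$ with $\sigma\subseteq c$ and $\tau\cap c=\emptyset$.

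Finally I would connect this to the receptive fields. By Definition~\ref{def:realize}, $c\in\code$ with $\sigma\subseteq c$ and $c\cap\tau = \emptyset$ exactly says there is a nonempty atom $A^{\mathcal{U}}_c$ with $c\supseteq\sigma$ and $c\cap\tau=\emptyset$, i.e.\ there is a point of $X$ lying in every $U_i$, $i\in\sigma$, and in no $U_j$, $j\in\tau$ — equivalently, a point of $U_\sigma\smallsetminus\bigcup_{i\in\tau}U_i$. So the nonexistence of such a codeword is equivalent to $U_\sigma\subseteq\bigcup_{i\in\tau}U_i$, which is exactly the right-hand side. Taking $\tau=\emptyset$ (empty union $=\emptyset$) gives the ``in particular'' clause: $x_\sigma\in J_{\code}\iff U_\sigma = \emptyset$. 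The one step needing genuine care, rather than bookkeeping, is the claim that $J_{\code}$ is precisely the vanishing ideal of $\code$ in $\{0,1\}^n$ — in particular the ``only if'' part, that no element of $J_{\code}$ can vanish on $\code$ yet still fail to be captured; but this is the content of the cited result of Curto {\em et al.}, and one can either invoke it or verify it via the point-evaluation / characteristic-function basis of $\F_2$-valued functions on the cube.
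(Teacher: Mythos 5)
The paper does not give its own proof of this lemma; it cites it as \cite[Lemma~4.2]{curto2013neural}, so there is nothing in the paper to compare against. Your argument is correct and is essentially the standard proof from that reference: evaluate the pseudo-monomial on $\{0,1\}^n$, observe it is the indicator of $\{v:\sigma\subseteq v,\ \tau\cap v=\emptyset\}$, and translate through the atom description of $\operatorname{code}(\mathcal U,X)$.

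Two small points worth making explicit. First, the step you flag as needing care is indeed where the content lives: the full vanishing ideal of $\code$ in $\F_2[x_1,\dots,x_n]$ is not $J_\code$ but $J_\code+\langle x_i^2+x_i\rangle$, so ``$f$ vanishes on $\code$ $\Rightarrow$ $f\in J_\code$'' is false for arbitrary $f$. It is true for \emph{multilinear} $f$, because multilinear polynomials biject with $\F_2$-valued functions on $\{0,1\}^n$, so a multilinear $f$ vanishing on $\code$ equals $\sum_{v\notin\code,\ f(v)=1}\rho_v$ \emph{as a polynomial}, hence lies in $J_\code$. Your appeal to the pseudo-monomial hypothesis is doing exactly this work. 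Second, the expansion of $x_\sigma\prod_{i\in\tau}(1+x_i)$ is multilinear only when $\sigma\cap\tau=\emptyset$; the lemma as printed omits that hypothesis but needs it (e.g.\ with $\code=\{0,1\}^n$ one has $J_\code=0$ yet $U_1\subseteq U_1$, while $x_1(1+x_1)\neq 0$). The paper implicitly works under this hypothesis, as it states it two sentences later when defining RF relationships, and you are implicitly using it by calling the polynomial a pseudo-monomial; it would be cleaner to state it.
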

The relations $U_\sigma = \emptyset$ 
and 
$U_\sigma\subseteq\cup_{i\in\tau}U_i$, 
as long as $\sigma \cap \tau =  \emptyset$
and $U_{\sigma} \cap U_i \neq \emptyset$ for all $i \in \tau$, 
are called {\em receptive-field relationships} (RF relationships). There are more types of RF relationships~\cite{curto2013neural, GB, morvant}, but here we do not need them.
%{\color{blue} Return to example here?}
We will use Lemma~\ref{lem:receptivefields} 
in Section~\ref{sec:criterion-RF} to prove a criterion that precludes closed-convexity.

\section{Degenerate and nondegenerate codes} \label{sec:degenerate-v-nondeg}
The main result of this section, Theorem~\ref{thm:nondeg-summary}, shows that the concept of nondegeneracy exactly captures when the operations of taking interiors or closures yield the same code, thereby clarifying (as Cruz {\em et al.} articulated~\cite{cruz2019open}) that nondegeneracy is the ``correct'' setting for studying convexity of neural codes.

\begin{theorem} \label{thm:nondeg-summary}
Let $\fullreal$ be a collection of convex sets in $\mathbb{R}^d$.  %Then:
%    If $\mathcal{U}=\{U_i\}$ is a convex and degenerate cover, then:
    \begin{enumerate}[label=(\roman*)]
        \item  Assume every $U_i$ is open. 
        Then $\calU$ is nondegenerate if and only if  
        ${\rm code}(\mathcal{U},\mathbb{R}^d) = {\rm code}({\rm cl}(\mathcal{U}),\mathbb{R}^d)$.
        \item Assume every $U_i$ is closed. 
        Then $\calU$ is nondegenerate if and only if
        $ {\rm code}(\mathcal{U},\mathbb{R}^d) = {\rm code}({\rm int} (\mathcal{U}),\mathbb{R}^d)$.
        \end{enumerate}    
\end{theorem}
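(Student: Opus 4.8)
The plan is to prove each direction by establishing the contrapositive for the "only if" parts and by appealing to Proposition~\ref{prop:Cruz} for the "if" parts. Indeed, Proposition~\ref{prop:Cruz} already gives the forward implications: if $\calU$ is a nondegenerate collection of convex open (resp. closed) sets, then passing to closures (resp. interiors) does not change the code. So the work is entirely in the reverse directions: assuming the code is unchanged, deduce nondegeneracy.

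For part~(i), suppose $\calU = \realsets$ is a collection of convex \emph{open} sets and $\calU$ is \emph{degenerate}. By Lemma~\ref{cruzlemmadegen}(i), an open convex realization satisfying Definition~\ref{def:nondegen}(ii) automatically satisfies Definition~\ref{def:nondegen}(i); hence degeneracy of $\calU$ must come from the failure of Definition~\ref{def:nondegen}(ii): there is a nonempty $\scfaceS\subseteq[n]$ with $\bigl(\bigcap_{i\in\scfaceS}\partial U_i\bigr)\not\subseteq\partial\bigl(\bigcap_{i\in\scfaceS}U_i\bigr)$. Pick a point $p$ in the left-hand side but not the right-hand side. Since each $U_i$ is open, $p\notin U_i$ for $i\in\scfaceS$, so $p$ is in no $U_i$ with $i\in\scfaceS$; and $p\in\partial U_i$ for those $i$ means $p\in{\rm cl}(U_i)$. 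Because $p\notin\partial\bigl(\bigcap_{i\in\scfaceS}U_i\bigr)$ while $p\in\bigcap_{i\in\scfaceS}{\rm cl}(U_i)={\rm cl}\bigl(\bigcap_{i\in\scfaceS}U_i\bigr)$ (using convexity: the closure of a finite intersection of convex open sets with nonempty intersection equals the intersection of the closures — the case of empty intersection has to be handled separately, but then $p$ cannot lie in the interior either so this sub-case is easy), the point $p$ must lie in the \emph{interior} of $\bigcap_{i\in\scfaceS}{\rm cl}(U_i)$. Now look at the codeword of $p$ with respect to ${\rm cl}(\calU)$: it contains all of $\scfaceS$. Let $\scfaceT := \{\, j : p\in{\rm cl}(U_j)\,\}\supseteq\scfaceS$; then $p$ lies in the interior of $\bigcap_{j\in\scfaceT}{\rm cl}(U_j)$ while staying outside $\bigcup_{k\notin\scfaceT}{\rm cl}(U_k)$ (shrinking to a small ball around $p$ if necessary, since there are finitely many sets and $p$ avoids the closed sets ${\rm cl}(U_k)$ for $k\notin\scfaceT$). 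Hence the atom $A^{{\rm cl}(\calU)}_{\scfaceT}$ is nonempty, so $\scfaceT\in{\rm code}({\rm cl}(\calU),\R^d)$. But $\scfaceT\notin{\rm code}(\calU,\R^d)$: any point realizing $\scfaceT$ for the open sets would have to lie in $\bigcap_{i\in\scfaceS}U_i$ and hence, being an interior point, in $\partial(\bigcap_{i\in\scfaceS}U_i)$ only on the boundary — more carefully, one checks directly that $\scfaceT$ cannot be a codeword of the open realization because $p$ witnessing it would force $\scfaceS$-membership contradicting $p\notin U_i$. (The cleanest version: the existence of such a $p$ shows some codeword of ${\rm cl}(\calU)$ strictly below $\scfaceT$ in a way incompatible with the open code; I will argue that ${\rm code}(\calU,\R^d)\subsetneq{\rm code}({\rm cl}(\calU),\R^d)$ directly by exhibiting $\scfaceT$ as the new codeword.) This contradicts the hypothesis that the two codes are equal. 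Part~(ii) is dual: if $\calU$ is a degenerate collection of convex \emph{closed} sets, then by Lemma~\ref{cruzlemmadegen}(ii) the failure must be of Definition~\ref{def:nondegen}(i), i.e. there is a nonempty open $S_o$ and a nonempty atom $A^\calU_{\scfaceS}$ with $A^\calU_{\scfaceS}\cap S_o\neq\emptyset$ but ${\rm int}(A^\calU_{\scfaceS}\cap S_o)=\emptyset$; one then locates a point $p\in A^\calU_\scfaceS$ lying in $\partial U_i$ for every $i\in\scfaceS$ (this is where the empty-interior condition is used, via convexity), and argues that $p$ lies on the boundary of each $U_i$, $i\in\scfaceS$, but not in ${\rm int}(U_i)$, so the codeword of $p$ with respect to ${\rm int}(\calU)$ drops those neurons. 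One shows that the resulting codeword of ${\rm int}(\calU)$ is not a codeword of $\calU$, again contradicting equality of codes.

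The technical heart, and the step I expect to be the main obstacle, is turning the abstract violation of Definition~\ref{def:nondegen} into an \emph{honest new codeword} — that is, finding a point whose atom, after taking closure (resp. interior), is genuinely nonempty and corresponds to a set $\scfaceT$ not in the original code. This requires two pieces of care: (a) the convexity facts relating ${\rm cl}(\bigcap U_i)$ to $\bigcap{\rm cl}(U_i)$ and ${\rm int}(\bigcap U_i)$ to $\bigcap{\rm int}(U_i)$ for finite families of convex sets (valid when the relevant intersections are nonempty — one should cite or quickly prove these, handling the degenerate empty cases separately), and (b) a local argument: since there are only finitely many sets $U_1,\dots,U_n$, a point $p$ that avoids a given closed set (or lies in the interior of a given set) does so on a whole neighborhood, so one can pass to a small ball around $p$ to "clean up" the membership pattern and pin down exactly which neurons survive. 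Once these two ingredients are in place the contradiction is immediate. I would also double-check the boundary-case bookkeeping when $\bigcap_{i\in\scfaceS}U_i=\emptyset$, where $\partial(\bigcap_{i\in\scfaceS}U_i)=\emptyset$ and the inclusion in Definition~\ref{def:nondegen}(ii) forces $\bigcap_{i\in\scfaceS}\partial U_i=\emptyset$ as well; a violation there still produces a point in $\bigcap_i{\rm cl}(U_i)$ with all the $U_i$, $i\in\scfaceS$, open and not containing it, which is exactly the situation that creates a spurious codeword after closure.

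Finally, I would remark that this theorem, combined with Proposition~\ref{prop:Cruz} (which supplies the converse), is precisely the "summary" statement promised in the introduction: nondegeneracy $\iff$ closures/interiors preserve the realized code, and such a code is therefore simultaneously open-convex and closed-convex.
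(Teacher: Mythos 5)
Your overall strategy is the same as the paper's: split off the forward direction (Proposition~\ref{prop:Cruz}) and prove the contrapositive of the reverse direction, using Lemma~\ref{cruzlemmadegen} to pin down which half of Definition~\ref{def:nondegen} must fail.  That much is right, and the paper does record the reverse direction as a standalone statement (Proposition~\ref{prop:degen-cover}).

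However, your handling of part~(i) has a logical flaw that you do not notice. You pick $p\in\bigcap_{i\in\sigma}\partial U_i$ with $p\notin\partial\bigl(\bigcap_{i\in\sigma}U_i\bigr)$ and, in the case $\bigcap_{i\in\sigma}U_i\neq\emptyset$, conclude that $p$ lies in the \emph{interior} of $\bigcap_{i\in\sigma}{\rm cl}(U_i)$. But your own convexity facts give, for open convex $U_i$ with $\bigcap U_i\neq\emptyset$,
\[
{\rm int}\Bigl(\bigcap_{i\in\sigma}{\rm cl}(U_i)\Bigr) \;=\; {\rm int}\Bigl({\rm cl}\bigl(\textstyle\bigcap_{i\in\sigma} U_i\bigr)\Bigr) \;=\; \bigcap_{i\in\sigma}U_i,
\]
which would force $p\in U_i$ for each $i\in\sigma$ --- contradicting the fact that $p\in\partial U_i$ with $U_i$ open, so $p\notin U_i$. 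This contradiction is not a defect to paper over; it is the crux. It shows the ``nonempty intersection'' case is \emph{vacuous}: in any degenerate open convex realization, the witnessing $\sigma$ must satisfy $\bigcap_{i\in\sigma}U_i=\emptyset$. The paper proves exactly this (by a direct line-segment argument instead of invoking convexity identities), and then the conclusion is immediate with no need to build a specific codeword $\tau$: no codeword of ${\rm code}(\calU,\R^d)$ contains $\sigma$, while $\bigcap_{i\in\sigma}\partial U_i\neq\emptyset$ forces $\bigcap_{i\in\sigma}{\rm cl}(U_i)\neq\emptyset$, so some codeword of ${\rm code}({\rm cl}(\calU),\R^d)$ does contain $\sigma$. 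Your closing remark about the ``boundary-case bookkeeping when $\bigcap_{i\in\sigma}U_i=\emptyset$'' is in fact the whole proof, not a side case, and the codeword-$\tau$ construction you center your argument on is unnecessary there and incoherent in the other (impossible) case.

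Part~(ii) has a similar soft spot: the paper's argument proves $\bigcap_{i\in c}{\rm int}(U_i)=\emptyset$ (so $c$ simply drops out of ${\rm code}({\rm int}(\calU),\R^d)$), whereas your sketch asserts the existence of a single point of the atom lying in $\partial U_i$ \emph{for every} $i\in c$, which does not obviously follow from the hypothesis ${\rm int}\bigl(A^{\calU}_{c}\cap S_o\bigr)=\emptyset$ and is in any case stronger than needed. I'd recommend reorganizing the proof around the two emptiness claims ($\bigcap_{i\in\sigma}U_i=\emptyset$ in the open case, $\bigcap_{i\in c}{\rm int}(U_i)=\emptyset$ in the closed case); everything else then falls into place cleanly.
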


Theorem~\ref{thm:nondeg-summary} follows directly from Proposition~\ref{prop:Cruz} (due to Cruz {\em et al.}~\cite{cruz2019open}) and Proposition~\ref{prop:degen-cover}. % below.  
%Also in this section, we explore when the open, closed, and nondegenerate embedding dimensions are equal (see Theorem~\ref{thm:codes-on-up-to-3} and Conjectures~\ref{conj:dim-1} and~\ref{conj:dim}). 

    \begin{proposition}  \label{prop:degen-cover}
 Let $\fullreal$ be a degenerate collection of convex sets in $\mathbb{R}^d$.  Then:
%    If $\mathcal{U}=\{U_i\}$ is a convex and degenerate cover, then:
    \begin{enumerate}[label=(\roman*)]
        \item  if every $U_i$ is open, then 
        ${\rm code}(\mathcal{U},\mathbb{R}^d) \neq {\rm code}({\rm cl}(\mathcal{U}),\mathbb{R}^d)$; and
        \item if
        every $U_i$ is closed, then $ {\rm code}(\mathcal{U},\mathbb{R}^d) \neq {\rm code}({\rm int} (\mathcal{U}),\mathbb{R}^d)$.
        \end{enumerate}    
    \end{proposition}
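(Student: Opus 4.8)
The plan is to prove the contrapositive of Proposition~\ref{prop:Cruz} directly: assuming $\calU$ is degenerate, we must locate a codeword witnessing the discrepancy between $\code(\calU,\mathbb{R}^d)$ and the code of its closure (resp.\ interior). Degeneracy means one of the two conditions in Definition~\ref{def:nondegen} fails. By Lemma~\ref{cruzlemmadegen}, in the open-convex case failure of (i) forces failure of (ii), and in the closed-convex case failure of (ii) forces failure of (i); so in case~(i) I may assume condition~(ii) fails, and in case~(ii) I may assume condition~(i) fails. I would treat the two cases separately but in parallel.

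For case~(i) (open $U_i$, condition~(ii) fails), there is a nonempty $\sigma\subseteq[n]$ and a point $p\in\bigl(\bigcap_{i\in\sigma}\partial U_i\bigr)\setminus\partial\bigl(\bigcap_{i\in\sigma}U_i\bigr)$. Since $p\in\partial U_i$ for each $i\in\sigma$ and the $U_i$ are open, $p\notin\bigcap_{i\in\sigma}U_i$, so $p\notin\mathrm{int}(\bigcap_{i\in\sigma}U_i)$; hence $p\notin\mathrm{cl}(\bigcap_{i\in\sigma}U_i)$, i.e.\ $p$ has a neighborhood $V$ disjoint from $\bigcap_{i\in\sigma}U_i=U_\sigma$. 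On the other hand $p\in\mathrm{cl}(U_i)$ for every $i\in\sigma$. The key point is convexity: for an open convex set $U_i$, $\mathrm{cl}(U_i)$ is again convex and $p\in\partial U_i$ lies on its boundary, and near $p$ the closure $\mathrm{cl}(U_i)$ occupies ``essentially half'' the neighborhood. Intersecting finitely many such half-spaces through a common boundary point $p$, I would argue that $\bigcap_{i\in\sigma}\mathrm{cl}(U_i)$ still contains points arbitrarily close to $p$ other than $p$ itself (this is where one must use that $U_\sigma$ is nonempty — pick $q\in U_\sigma$, and the segment $[p,q)$ lies in each $U_i$ by convexity, so $[p,q]\subseteq\bigcap\mathrm{cl}(U_i)$, and in particular $[p,q]\cap V\neq\emptyset$). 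So choose a point $r\in[p,q]\cap V$ with $r\neq q$; then $r\in\bigcap_{i\in\sigma}\mathrm{cl}(U_i)$ but $r\notin U_\sigma$. Let $\tau=\{\,j : r\in\mathrm{cl}(U_j)\,\}\supseteq\sigma$ and $\tau'=\{\,j : r\in U_j\,\}$; then $\tau'\subsetneq\tau$ because $\sigma\not\subseteq\tau'$. Now shrink to a small ball $B$ around $r$ inside $V$ missing every $U_j$ with $j\notin\tau$; a suitable point of $B$ lies in the atom $A^{\mathrm{cl}(\calU)}_{\tau}$, while no point of $\mathbb{R}^d$ can lie in $A^{\calU}_{\tau}$ near $r$ — more carefully, I would show $\tau$ (or some codeword $\supseteq\tau'$ strictly containing $\tau'$) is realized by $\mathrm{cl}(\calU)$ but not by $\calU$, giving the desired inequality of codes.

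For case~(ii) (closed $U_i$, condition~(i) fails), there is a nonempty open $S_o$ and a nonempty atom $A^{\calU}_\sigma$ with $A^{\calU}_\sigma\cap S_o\neq\emptyset$ but $\mathrm{int}(A^{\calU}_\sigma\cap S_o)=\emptyset$. Pick $p\in A^{\calU}_\sigma\cap S_o$. Since $A^{\calU}_\sigma\cap S_o$ has empty interior, every neighborhood of $p$ contains points outside $A^{\calU}_\sigma$; combined with $p\in S_o$, for $i\in\sigma$ every neighborhood of $p$ meets $\mathbb{R}^d\setminus U_i$ or for $j\notin\sigma$ meets $U_j$. I would argue using convexity of the closed sets $U_i$ that $\sigma\notin\code(\mathrm{int}(\calU),\mathbb{R}^d)$: the obstruction is that $p$ must lie on the boundary $\partial U_i$ for at least one $i\in\sigma$ (else a whole ball around $p$ sits in $\bigcap_{i\in\sigma}U_i$, and since only finitely many $U_j$ are relevant and those with $j\notin\sigma$ miss $p$ hence miss a smaller ball, that ball would be in $A^{\calU}_\sigma$, contradicting empty interior). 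For a closed convex set, $\partial U_i$ has empty interior, and $\mathrm{int}(U_i)$ is the relative interior when $U_i$ is full-dimensional, so $p\notin\mathrm{int}(U_i)$. The cleanest route: show that for every $x\in\mathbb{R}^d$, the atom $A^{\mathrm{int}(\calU)}_\sigma$ is empty, equivalently $\bigcap_{i\in\sigma}\mathrm{int}(U_i)\subseteq\bigcup_{j\notin\sigma}\mathrm{int}(U_j)$ fails to produce a point, by a dimension/convexity count; alternatively exhibit a specific codeword in one code and not the other by the same near-$p$ analysis as in case~(i), with roles of interior and closure swapped.

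The main obstacle I anticipate is case~(i): converting the algebraic failure of the boundary containment in Definition~\ref{def:nondegen}(ii) into an honest statement about which atoms become nonempty in $\mathrm{cl}(\calU)$. The subtlety is bookkeeping — identifying precisely which codeword $\tau$ is gained (or lost), handling lower-dimensional $U_i$ where ``interior'' degenerates, and ruling out the possibility that the change in atoms is invisible at the level of the code because that codeword was already present. I expect the convexity inputs I need are exactly: $\mathrm{cl}$ and $\mathrm{int}$ commute with finite intersections of convex sets up to boundary (e.g.\ $\mathrm{int}(\bigcap U_i)=\bigcap\mathrm{int}(U_i)$ when the intersection is full-dimensional), and the segment-joining trick ($p$ on the boundary, $q$ in the open set $\Rightarrow[p,q)$ in the open set). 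These are standard, so the real work is the combinatorial extraction of the witnessing codeword, which I would organize as a short lemma handling both cases by symmetry.
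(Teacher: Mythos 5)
Your plan has the right skeleton — reduce via Lemma~\ref{cruzlemmadegen} to a single failed condition, then use a segment-joining argument exploiting convexity — and this is also the paper's approach. But you have missed the decisive simplification, and as a result you are forecasting a ``bookkeeping'' problem (identifying which codeword $\tau$ is gained or lost) that does not exist.

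In case~(i), follow your own segment argument to its conclusion. You take $p\in\bigl(\bigcap_{i\in\sigma}\partial U_i\bigr)\setminus\partial\bigl(\bigcap_{i\in\sigma}U_i\bigr)$, deduce $p\notin\mathrm{cl}(U_\sigma)$ and get a neighborhood $V$ of $p$ disjoint from $U_\sigma$. You then try to pick $q\in U_\sigma$. If such a $q$ exists, the half-open segment $(p,q]$ — not $[p,q)$ as you wrote; the endpoint $p$ lies on the boundary of each open $U_i$, hence outside it — is contained in $U_i$ for every $i\in\sigma$, hence in $U_\sigma$. But points of $(p,q]$ sufficiently close to $p$ lie in the open set $V$, which is disjoint from $U_\sigma$. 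That is a contradiction, not the start of a codeword hunt. The correct conclusion is simply that $U_\sigma=\emptyset$. Once you know this, the witnessing discrepancy falls out: $U_\sigma=\emptyset$ means \emph{no} codeword of $\mathrm{code}(\mathcal{U},\mathbb{R}^d)$ contains $\sigma$, while the hypothesis $\bigcap_{i\in\sigma}\partial U_i\nsubseteq\partial U_\sigma$ forces $\bigcap_{i\in\sigma}\partial U_i\neq\emptyset$, hence $\bigcap_{i\in\sigma}\mathrm{cl}(U_i)\neq\emptyset$, hence \emph{some} codeword of $\mathrm{code}(\mathrm{cl}(\mathcal{U}),\mathbb{R}^d)$ contains $\sigma$. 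No particular $\tau$ needs to be extracted, and there is no danger of the change being ``invisible at the level of the code.''

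Case~(ii) has the symmetric structure, which your sketch gestures at but does not pin down. Writing the degenerate atom intersection as $\bigl(\bigcap_{i\in c}U_i\bigr)\cap Z$ with $Z$ open, the empty-interior hypothesis becomes $\bigl(\bigcap_{i\in c}\mathrm{int}(U_i)\bigr)\cap Z=\emptyset$. The same two-point segment argument (take $y\in\bigcap_{i\in c}\mathrm{int}(U_i)$ if it existed, and $x$ in the nonempty set $\bigl(\bigcap_{i\in c}U_i\bigr)\cap Z$; points on $(x,y]$ near $x$ land in both) shows $\bigcap_{i\in c}\mathrm{int}(U_i)=\emptyset$, so the codeword $c$ itself is the witness: it is in $\mathrm{code}(\mathcal{U},\mathbb{R}^d)$ but not in $\mathrm{code}(\mathrm{int}(\mathcal{U}),\mathbb{R}^d)$. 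In short, in both cases there is a single distinguished index set ($\sigma$, resp.\ $c$) whose intersection over $\mathcal{U}$ (resp.\ over $\mathrm{int}(\mathcal{U})$) is shown to be \emph{empty}, and all the combinatorics you were bracing for evaporates once you prove that.
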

    
    \begin{proof}
    $(i)$: Assume that $\mathcal{U}$ is degenerate and every $U_i$ is open and convex. By Lemma \ref{cruzlemmadegen}, there exists some nonempty $\sigma \subseteq [n]$ %=\{1,2, \dots, n\}$ 
    such that 
    \begin{align} \label{eq:subsetneq}
    %\left(
    \bigcap_{i \in \sigma} \partial U_i %\right) 
    \quad \nsubseteq \quad  \partial \left( \bigcap_{i \in \sigma}U_i \right)~.
    \end{align}

    % CLAIM
    We claim that $\cap_{i \in \sigma} U_i = \emptyset$.  To prove our claim, assume for contradiction that $\cap_{i \in \sigma} U_i \neq \emptyset$ and so there exists some $y \in \cap_{i \in \sigma} U_i$.  
         (See Figure~\ref{fig:proof-x-y-z} for the case of $|\sigma|=2$.)
    We will show the containment  
    $\cap_{i \in \sigma} \partial U_i  
     \subseteq  \partial \left( \cap_{i \in \sigma}U_i \right)$, contradicting~\eqref{eq:subsetneq}.  
     Accordingly, let $x \in \cap_{i \in \sigma} \partial U_i$.  
     By construction, $x \neq y$, because $x$ is on the boundary of the open set $U_i$ (for any $i \in \sigma$) and so is not in $U_i$ (whereas $y \in U_i$).   
     Consider the line segment $L$ from $x$ (which is in $\partial U_i$ for all $i \in \sigma$) to $y$ (which is in $U_i$ for all $i \in \sigma$).  
    Let $\widetilde{L}$ denote the line that is the affine span of $L$ (so, $L \subseteq \widetilde{L}$).  Then, for $i\in \sigma$, the intersection $U_i \cap \widetilde{L}$, which contains $y$, is an open subinterval of $\widetilde{L}$ (because $U_i$ is open and convex), and it is straightforward to check that $x$ is an endpoint of this subinterval.  We conclude that $L\smallsetminus \{x\}\subseteq U_i$ (for all $i \in \sigma$).

     Now, take an open neighborhood $B$ (in $\mathbb{R}^d$) of~$x$.  Fix $i_0 \in \sigma$.  As $x \in \partial U_{i_0}$, there exists a point $z \in B \smallsetminus U_{i_0}$ and so $z  \in B \smallsetminus \left( \cap_{i\in \sigma} U_{i}\right)$.  On the other hand, 
    every point $z'$ on the line segment $L$ that is sufficiently close to (but not equal to) $x$ 
    is in $B$ and also in $U_i$ for all $i \in \sigma$.  Hence, $B$ contains a point $z'$ that is in $\cap_{i \in \sigma}U_i$ and also a point $z$ that is not (see Figure~\ref{fig:proof-x-y-z}).  We conclude that $x \in \partial \left( \cap_{i \in \sigma}U_i \right)$, and so we have reached a contradiction.  Our claim therefore is true.
    
    Our claim implies that {\em no} codeword of ${\rm code}(\mathcal{U},\mathbb{R}^d) $ contains $\sigma$.  
    On the other hand, the containment~\eqref{eq:subsetneq} implies 
    that $ \bigcap_{i \in \sigma} \partial U_i$ is nonempty, and hence $ \bigcap_{i \in \sigma} {\rm cl}( U_i)$ is also nonempty.  So, some codeword of ${\rm code}({\rm cl}(\mathcal{U}), \mathbb{R}^d)$  contains $\sigma$, and thus ${\rm code}(\mathcal{U},\mathbb{R}^d) \neq {\rm code}({\rm cl}(\mathcal{U}),\mathbb{R}^d)$.
    
%    Next, the containment~\eqref{eq:subsetneq} implies that there exists $p \in \bigcap_{i \in \sigma} \partial U_i$ such that $p \notin  \partial \left( \bigcap_{i \in \sigma}U_i \right)$.  Consider the codeword of $p$ in ${\rm code}({\rm cl}(\mathcal{U}), \mathbb{R}^d)$, namely, $c_p:= \{ i \in [n] \mid p \in {\rm cl} (U_i)\}$.  We now finish the proof of $(i)$ by showing that $c_p$ is not a codeword of ${\rm code}(\mathcal{U},\mathbb{R}^d) $.  
%    As $p \in \bigcap_{i \in \sigma} \partial U_i$, we obtain the inclusion $\sigma \subseteq c_p$.  This implies the following inclusion:
%    \begin{align*}
%            A^{\mathcal{U}}_{c_p} ~=~
%        \left( \bigcap_{i \in c_p} U_i \right) \smallsetminus \left( \bigcup_{j \notin c_p} U_j \right) \quad \subseteq \quad \bigcap_{i \in \sigma} U_i \quad = \quad \emptyset~,
%    \end{align*}
%    where the equality is our earlier claim. 
%    Hence, as desired, $c_p \notin {\rm code}(\mathcal{U},\mathbb{R}^d) $.      

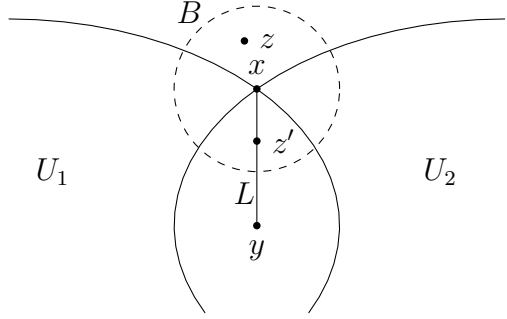
\begin{figure}
    \begin{center}
    \begin{tikzpicture} [scale=1.1]
	    \tikzstyle{vertex}=[circle,fill=black, inner sep = 1pt]
	        \draw [domain=-25:90] plot ({4*cos(\x)}, {2.5*sin(\x)});
            \draw [domain=90:205] plot ({4*cos(\x)+6}, {2.5*sin(\x)});
            \node [vertex, label=$x$] at (3,{2.5*sin(acos(3/4))}) {};
            \node [vertex, label=below:$y$] at (3,0) {};
            \draw (3,0) -- (3,{2.5*sin(acos(3/4))});
            \node [label=$L$] at (2.85,0) {};
            \draw [dashed] (3,{2.5*sin(acos(3/4))}) circle (1);
            \node [label=left:$B$] at (2.6,{3*sin(acos(3/4))+.6}) {};
            \node [vertex, label=right:$z$] at (2.85,{3*sin(acos(3/4))+.25}) {};
            \node [vertex, label=right:$z'$] at (3,{2*sin(acos(3/4))-.3}) {};
            \node [label=left:$U_{1}$] at (1,.65) {};
            \node [label=right:$U_{2}$] at (4.75,.65) {};
    \end{tikzpicture}
    \end{center}
     \centering
    \caption{Ideas in the proof of Proposition~\ref{prop:degen-cover}(i), when $\sigma = \{1,2\}$.}
    \label{fig:proof-x-y-z}
    \end{figure}

% PROOF OF (ii)
    \noindent $(ii)$: Assume that $\mathcal{U}$ is  degenerate and every $U_i$ is closed and convex. By Lemma \ref{cruzlemmadegen}, there exist a nonempty open set $S_o \subseteq \mathbb{R}^d$ and an atom $A^\mathcal{U}_{c}$ 
    such that %$A^\mathcal{U}_{c} \cap S_o$ 
    the following set 
    is nonempty but has empty interior:
%    where $A^\mathcal{U}_{c_x} \cap S_o \neq \emptyset$ and ${\rm int}(A^\mathcal{U}_{c_x}\cap S_o) = \emptyset$, i.e., $A^\mathcal{U}_{c_x}$ is not top dimensional. 
    \begin{align} \label{eq:intersection-atom}
    A^\mathcal{U}_{c} \cap S_o \quad =\quad  
            \left( \bigcap_{i \in c} U_i \right) 
            \cap 
            \left( 
            \bigcap_{j \notin c} (\mathbb{R}^d \smallsetminus U_j) \right) 
            \cap S_o
            \quad =\quad  
            \left( \bigcap_{i \in c} U_i \right) 
            \cap
            Z
            ~,
    \end{align}
where 
$Z:=\left(             
            \bigcap_{j \notin c} (\mathbb{R}^d \smallsetminus U_j) \right) 
            \cap S_o$.  
            This set $Z$ is open (as it is the intersection of open sets). 
            %and nonempty (because the left-hand side of~\eqref{eq:intersection-atom} is nonempty by hypothesis).  
            Therefore, taking the interiors of the sets in~\eqref{eq:intersection-atom} (and recalling that taking interiors and intersections commute, see e.g.~\cite[Lemma A.1]{cruz2019open}) yields:
    \begin{align} \label{eq:intersection-atom-interior}
   \emptyset \quad = \quad
    {\rm int} \left( A^\mathcal{U}_{c} \cap S_o \right) 
    \quad =\quad  
            \left( \bigcap_{i \in c} {\rm int}(U_i) \right)
            %\left(
            %{\rm int} (
            %\bigcap_{i \in c} U_i )
            %\right)
            \cap
            {\rm int} (Z)
    \quad =\quad  
        \left( \bigcap_{i \in c} {\rm int}(U_i) \right)
        %\left(
        %    {\rm int} (
        %    \bigcap_{i \in c} U_i )
        %    \right)
            \cap
            Z
            ~.
    \end{align}
We claim that the
intersection 
$  \bigcap_{i \in c} {\rm int}(U_i) $ is empty.
We prove this claim by contradiction.  Accordingly, assume there exists 
$y \in \bigcap_{i \in c} {\rm int}(U_i) $.  
As the set in~\eqref{eq:intersection-atom} is nonempty, there exists 
$x \in  \left( \bigcap_{i \in c} U_i \right) 
            \cap
            Z$.  Let $L$ denote the line segment from $x$ to $y$ (the $|c|=2$ case looks much like what is depicted in Figure~\ref{fig:proof-x-y-z}). 
For $i \in c$, we have the containment $(L \smallsetminus \{x\}) \subseteq {\rm int} (U_i)$, because $x\in U_i$, $y \in {\rm int} (U_i)$, and $U_i$ is convex.  Next, $x$ is in the open set $Z$, so all points $z'$ on $L$ that are sufficiently close to, but not equal to, $x$ are also in $Z$.  Hence, all such points $z'$ are in 
$        \left( \bigcap_{i \in c} {\rm int}(U_i) \right)
            \cap
            Z$, which contradicts~\eqref{eq:intersection-atom-interior}.  So, our claim is true.

We conclude the following: 
    \begin{align*}
    A^{{\rm int}(\mathcal{U})}_{c}
    \quad =\quad  
            \left( \bigcap_{i \in c} {\rm int}(U_i) \right) 
            \smallsetminus 
            \left( \bigcup_{j \notin c} {\rm int}(U_j) \right)
        \quad \subseteq \quad  
        \bigcap_{i \in \sigma} {\rm int}(U_i) 
        %\quad = \quad 
        %{\rm int} \left( \bigcap_{i \in \sigma} U_i \right) 
        \quad = \quad         \emptyset~.
    \end{align*}
%where we again use the fact that intersections commute with interiors. 
Thus, $c$ is {\em not} a codeword of ${\rm code}( {\rm int}(\mathcal{U}),\mathbb{R}^d) $, but is a codeword of 
${\rm code}(\mathcal{U},\mathbb{R}^d) $. 
    \end{proof}

We end this section by %investigating 
proving one case 
when the inequalities~\eqref{eq:dim-inequalities} on embedding dimensions from Proposition~\ref{prop:dvnd} are in fact equalities -- and then  discussing  %conjecturing 
two more such cases. 
\begin{theorem} \label{thm:codes-on-up-to-3}
Let $\code$ be a neural code on $n$ neurons.  If $n \leq 3$, then all embedding dimensions are equal:
\begin{align} \label{eq:dim-equality-3}
    \odim(\code) ~=~ 
    \nondegdim(\code)  
    ~=~
    \cdim(\code) ~.
\end{align}
\end{theorem}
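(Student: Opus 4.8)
The plan is to promote the two inequalities of Proposition~\ref{prop:dvnd} to equalities when $n\le 3$. Since $\odim(\code)\le\nondegdim(\code)$ and $\cdim(\code)\le\nondegdim(\code)$ always hold, it is enough to establish, for codes on at most three neurons, the following two ``upgrading'' statements: $(A)$ every convex \emph{open} realization of $\code$ in $\mathbb{R}^d$ can be replaced by a \emph{nondegenerate} convex open realization of $\code$ in the same $\mathbb{R}^d$, and $(B)$ the analogous statement with ``open'' replaced by ``closed.'' Applying $(A)$ in dimension $\odim(\code)$ yields a nondegenerate realization there, so $\nondegdim(\code)\le\odim(\code)$, and with Proposition~\ref{prop:dvnd} this forces $\odim(\code)=\nondegdim(\code)$; symmetrically $(B)$ gives $\cdim(\code)=\nondegdim(\code)$. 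Moreover, combining $(A)$ with Proposition~\ref{prop:Cruz}$(i)$ (take closures of a nondegenerate open realization) shows that an open-convex code on $\le 3$ neurons is closed-convex, and $(B)$ with Proposition~\ref{prop:Cruz}$(ii)$ gives the converse; hence for $n\le 3$ a code is either convex with all three dimensions finite and equal, or non-convex with all three equal to $\infty$.

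I would prove $(A)$ by shrinking and $(B)$ by the dual enlarging. For $(A)$, given a convex open realization $\calU=\{U_i\}_{i=1}^n$, pass to the $\eps$-shrinkings $U_i^{-\eps}:=\{x\in U_i:\operatorname{dist}(x,\partial U_i)>\eps\}$, which are again convex and open. By Lemma~\ref{cruzlemmadegen}$(i)$ it suffices to verify Definition~\ref{def:nondegen}$(ii)$ for $\calU^{-\eps}$; this is automatic for singletons, and for $|\sigma|\in\{2,3\}$ the argument inside the proof of Proposition~\ref{prop:degen-cover}$(i)$ shows that a point of $\bigcap_{i\in\sigma}\partial U_i^{-\eps}$ would force $\bigcap_{i\in\sigma}U_i^{-\eps}=\emptyset$ while an $\eps$-ball lies inside $\bigcap_{i\in\sigma}U_i$, which is impossible once $\eps$ is smaller than the inradius of each nonempty $\bigcap_{i\in\sigma}U_i$ (these are open, hence full-dimensional when nonempty). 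The remaining point is that $\calU^{-\eps}$ still realizes $\code$: no nonempty atom disappears, since a chosen interior witness of each survives; and no new atom appears because whenever $A^{\calU}_{\sigma}=\emptyset$ the complementary family $\{U_j\}_{j\notin\sigma}$ has size at most two (here $n\le 3$ enters), and the union of two convex sets covering an open convex set keeps covering it after shrinking, \emph{provided} one first removes, by a tiny suitably-directed translation, any tangential incidence between $\bigcap_{i\in\sigma}U_i$ and $\partial\bigl(\bigcup_{j\notin\sigma}U_j\bigr)$. Part $(B)$ is symmetric: first intersect each $U_i$ with a large closed ball containing a witness of every nonempty atom (this changes no atom), so the sets become compact and disjoint subfamilies lie at positive distance; then enlarging each $U_i$ to its closed $\eps$-neighbourhood creates no new codeword and thickens the lower-dimensional intersections that, by the proof of Proposition~\ref{prop:degen-cover}$(ii)$, are exactly what closed degeneracy produces, again after a preliminary small perturbation to disentangle tangencies among the $U_i$.

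I expect the tangency bookkeeping to be the main obstacle: uniform shrinking or enlarging changes the realized code precisely when two of the (at most three) sets, or a set and the intersection of the other(s), touch tangentially, and one must check that these finitely many configurations can be straightened by small, well-chosen perturbations that do not themselves disturb any atom. Because $n\le 3$, the list of configurations is short and can be handled by hand. An alternative route avoids $(A)$ and $(B)$ altogether: up to relabeling there are only a handful of convex codes on at most three neurons, each turns out to be max-intersection-complete, hence both open- and closed-convex by~\cite{cruz2019open}, and for each one may directly exhibit a nondegenerate open and closed realization attaining the minimal embedding dimensions, which in dimensions one and two are recorded in~\cite{zvi-yan, what-makes}; this also shows $\odim(\code)=\cdim(\code)$ for these codes.
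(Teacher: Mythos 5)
Your high-level framing is sound, and your ``alternative route'' at the end is essentially the paper's proof: the paper first reduces the closed-convex case to the open-convex one (a closed-convex code has no local obstructions, and for $n\le 3$ that already forces open-convexity by~\cite{what-makes}), then goes through the finitely many open-convex codes on $\le 3$ neurons via~\cite[Table~2, Figure~7]{what-makes}, checking with Theorem~\ref{thm:nondeg-summary} that the listed minimal-dimension realizations are already nondegenerate. Your primary route, however, has a genuine gap that you flag but do not close.

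The assertion that ``the union of two convex sets covering an open convex set keeps covering it after shrinking'' is false as stated, and the needed tangency removal is not mere bookkeeping. Concretely, in $\R^2$ take $U_1=\{x^2+(y-1)^2<1\}$, $U_2=\{y>x\}$, $U_3=\{y>-x\}$. Then $U_1\subseteq U_2\cup U_3=\{y>-|x|\}$, so $A^{\calU}_{1}=\emptyset$; yet for every $\eps>0$ the point $(0,\eps+\delta)$ with $0<\delta<\eps(\sqrt2-1)$ lies in $U_1^{-\eps}$ but outside $U_2^{-\eps}\cup U_3^{-\eps}=\{y>-|x|+\eps\sqrt2\}$, so the shrunken family acquires the codeword $\{1\}$ no matter how small $\eps$ is. You propose a preliminary ``tiny suitably-directed translation,'' but you neither construct it, nor show that a single perturbation simultaneously removes all such incidences over all $\sigma$, nor show that the perturbation itself preserves the code; none of this is immediate even with only three sets. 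The dual step in $(B)$ has the analogous problem for $|\sigma|=2$: a point within $\eps$ of both $U_1$ and $U_2$ need not be within $\eps$ of $U_1\cap U_2$, so the relation $U_1\cap U_2\subseteq U_3$ can break under enlarging, and again a perturbation is invoked without proof. As written, $(A)$ and $(B)$ are not established and the argument reduces entirely to the enumeration route, which you also do not carry out; note that $(A)$, $(B)$ as you state them (``\emph{every} realization can be upgraded'') are in any case stronger than the theorem requires, since only the existence of one nondegenerate realization in the minimal dimension is needed, and that is exactly what the paper exhibits.
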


\begin{proof}
For $n \leq 2$, this result is easy to check ($\nondegdim(\code) = 1$ for all such codes).  

Assume $n=3$.  By Proposition~\ref{prop:dvnd}, it suffices to 
prove the desired equalities~\eqref{eq:dim-equality-3} for codes~$\code$ that are open-convex or closed-convex.

First %assume that 
consider the case when 
$\code$ is open-convex.  
A list of these codes and their open embedding dimensions can be read off (through their simplicial complex) from~\cite[Table~2]{what-makes}.  
For those codes with $\odim(\code)=1$, the equalities~\eqref{eq:dim-equality-3} are easy to check.  
Now consider the case when $\odim(\code)=2$.  An open-convex realization $\calU$ (in $\mathbb{R}^2$) of $\code$ is shown in~\cite[Figure~7]{what-makes}, and ${\rm cl}(\calU)$ is easily seen to also realize $\code$.  We conclude from Theorem~\ref{thm:nondeg-summary} that $\calU$ is nondegenerate, and so the equalities~\eqref{eq:dim-equality-3} hold.

Now %assume that 
consider the remaining case: 
$\code$ is closed-convex.  Then $\code$ has no local obstructions~\cite[Proposition 2.6]{cruz2019open} and so (as $n \leq 3$) is open-convex~\cite{what-makes}.  So, by the prior case, we are done.
\end{proof}

Theorem~\ref{thm:codes-on-up-to-3}
does not extend to codes on $n=4$ neurons (recall the code in Examples~\ref{ex:open-dim-not-equal-closed-dim} and~\ref{ex:nondeg-dim-not-equal}).
%$\mathcal{C}_\theta ~:=~
%\{ 
%{\bf 123}, 14, 24, 34, 1,2,3,4, \emptyset
%\}$

\begin{remark} \label{rem:dim-conjectures-now-resolved}
In a prior version of this article, we posed the following two conjectures, which, if true, would yield more cases in which Theorem~\ref{thm:codes-on-up-to-3} could be extended:
\begin{enumerate}
    \item For a code $\code$, if $
    \odim(\code) =1$ or $
    \cdim(\code) =1$, 
    then     
   $
    \nondegdim(\code) =1 
    $.
    \item For a code $\code$, if $
    \odim(\code) =
    \cdim(\code)$, 
    then     
    $\odim(\code) =
    %\cdim(\code) = 
    \nondegdim(\code)  
    $.
\end{enumerate}
Both conjectures have been resolved recently by Jeffs~\cite{open-closed-nondeg}.  The first conjecture is true. The second, however, is not.  Interestingly, Jeffs's constructions for this second result rely in a crucial way on our theory of rigid structures, which is the topic we turn to next.  
\end{remark}

%
%\begin{conjecture} \label{conj:dim-1}
%For a code $\code$, if $
%    \odim(\code) =1$ or $
%    \cdim(\code) =1$, 
%    then     
%   $
%    \nondegdim(\code) =1 
%    $.
%\end{conjecture}
%To resolve Conjecture~\ref{conj:dim-1}, we expect the results of Rosen and Zhang to be useful~\cite{zvi-yan}.
%
%\begin{conjecture} \label{conj:dim}
%For a code $\code$, if $
%    \odim(\code) =
%    \cdim(\code)$, 
%    then     
%    $\odim(\code) =
%    %\cdim(\code) = 
%    \nondegdim(\code)  
%    $.
%\end{conjecture}

\section{Precluding closed-convexity using rigid structures} \label{sec:rigid}

In this section, we elucidate
the geometric mechanisms that underlie the known cases of neural codes that are open-convex but non-closed-convex.  Specifically, we
give a criterion for non-closed-convexity in terms of ``rigid structures'' (Theorem~\ref{thm:rigid}), 
a special case of which can be read directly from the code (Theorem~\ref{thm:rigid-when-gph-is-cycle}), 
and then show that these results apply to the relevant codes we saw earlier (Proposition~\ref{prop:examples-rigid}).  

As this section is somewhat long, we give the reader a more detailed outline.  
In Section~\ref{sec:rigid-structure}, we define rigid structures and codeword-containment graphs (Definitions~\ref{def:rigid} and~\ref{def:order-forcing-graph}) and use these concepts to preclude closed-convexity.  Next, in Section~\ref{sec:path}, we give sufficient conditions for rigidity when the codeword-containment graph is a path (Proposition~\ref{prop:path-rigid}), and then, 
in Section~\ref{sec:proof-cycle}, we use that result to prove Theorem~\ref{thm:rigid-when-gph-is-cycle}.  Finally, we illustrate our results through many examples in Section~\ref{sec:examples-rigid}.

\subsection{Rigid structures} \label{sec:rigid-structure}
We begin by motivating the idea behind rigid structures through the code 
%$C6 = \{125, 234, 145, 123, 4, 23, 15, 12, \emptyset\}$ 
$C6 = \{ {\bf 123}, {\bf 125}, {\bf 145}, {\bf 234},
            12, 15, 23,
            4, 
            \emptyset\}$
from Example~\ref{ex:goldrup-phillipson}.  In Figure~\ref{fig:C6-rigid}, we show the nonempty codewords divided into two pieces: the top can be viewed as the restriction of the code to the set of neurons $\{1,2,3,5\}$, and the bottom is $U_4$.  Also, each piece (as we will see below) is ``rigid'' in the sense that the union (of the corresponding atoms) is convex in every closed-convex realization (of a restricted code).  
The reason for non-closed-convexity is now apparent:
in order for the ends of the two rigid structures to fit together -- the atom for $145$ is contained in the region indicated by $15$ and similarly for $234$ and $23$ -- one of the structures must ``bend'' and therefore is not convex.
%if code is open-convex, we get a ``pinwheel'' but that's not possible for closed-convex codes.

    \begin{figure}[ht]
        \centering
        \begin{tikzpicture}[thick,scale=.5, every node/.style={scale=0.8}]
            \begin{scope}
             	\node[label={$C6 | _ {1235}$}] at (21.5,0.2) {};
                \draw[pattern=north west lines] (0,0) rectangle (20,2){};
                \draw (4,0)--(4,2) {};
                \draw (8,0)--(8,2) {};
                \draw (12,0)--(12,2) {};
                \draw (16,0)--(16,2) {};
                \draw(2,1) node [fill=white,draw,rounded corners]{$15$};
                \draw(6,1) node [fill=white,draw,rounded corners]{$125$};
                \draw(10,1) node [fill=white,draw,rounded corners]{$12$};
                \draw(14,1) node [fill=white,draw,rounded corners]{$123$};
                \draw(18,1) node [fill=white,draw,rounded corners]{$23$};
                
             	\node[label={$U_4$}] at (17,-2.2) {};
                \draw[pattern= dots] (4,-1) rectangle (16,-2){};
                \draw(5.5,-1.5) node [fill=white,draw,rounded corners]{$145$};
                \draw(10,-1.5) node [fill=white,draw,rounded corners]{$4$};
                \draw(14.5,-1.5) node [fill=white,draw,rounded corners]{$234$};
                \draw(7,-1)--(7,-2){};
                \draw(13,-1)--(13,-2){};
                
                \draw[dotted] (2,0) -- (5.5,-1);
                \draw[dotted] (18,0) -- (14.5, -1);
            \end{scope}
        \end{tikzpicture}
        \caption{The code $C6$, viewed as the union of two rigid structures. Dotted lines indicate pairs of codewords arising from distinct rigid structures, with one contained in the other.}
                \label{fig:C6-rigid}
    \end{figure}
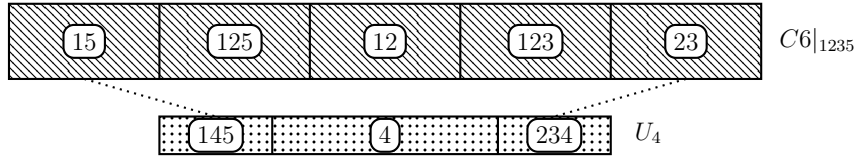

% DEFINITION OF RIGID
\begin{definition} \label{def:rigid}
Let $\code$ be a code on $n$ neurons.  
Let $\emptyset \neq \sigma \subseteq [n]$.  A pair 
$(\sigma, \bigcirc)$ 
%$(\sigma, * )$ 
is a {\em rigid structure} of $\code$ if one of the following holds:  
\begin{enumerate}
    \item $\bigcirc = \cap$, or
%    \item $* = \cap$, or
    \item $\bigcirc = \cup$, 
%    \item $* = \cup$, 
    and there exists a set 
    $\sigma'$ with 
    $\sigma \subseteq \sigma' \subseteq [n]$ such that the union $\cup_{i \in \sigma} U_i$ is convex in every closed-convex realization $\mathcal{U} = \{ U_i\}_{i \in \sigma'}$ of $\code|_{\sigma'}$.   %{\color{blue} If possible, delete ``closed'' here.}
\end{enumerate}
\end{definition}

\begin{remark}
Definition~\ref{def:rigid}(1) lacks an extra condition, because the intersection $\cap_{i \in \sigma} U_i$ 
(unlike the union in Definition~\ref{def:rigid}(2))
is automatically convex in every convex realization.
\end{remark}

\begin{remark}\label{rem:sigma-vs-sigma'}
In Definition~\ref{def:rigid}(2), 
we can always choose $\sigma' = [n]$, but checking this condition is generally easier when $\sigma' \subsetneq [n]$.  Indeed, in this section, we are interested in proving that a code $\code$ has no closed-convex realization -- but the restricted code $\code|_{\sigma'}$ may have such realizations, and so may be easier to analyze.
\end{remark}

\begin{example} \label{ex:restrict-to-1-neuron}
Let $\code$ be a code on $n$ neurons.  If there is a neuron $j$ that is contained in every nonempty codeword of $\code$, then 
$([n], \cup)$
is a rigid structure.
Indeed, 
for such a code $\code$,
we have 
$\cup_{i \in [n]} U_i = U_j$ for every convex realization $\fullreal$. 
%If $j$ is a  
%{\color{brown} nontrivial} 
% assumption of "nontrivial" not needed - empty set is convex!
%neuron of $\code$, then 
%the code consisting of all codewords containing $j$, that is, 
%$\mathcal{D} = \{ c \in \code \mid j \in c \}$, is rigid.  Indeed, %$\{i\}$ is a rigid structure of $\code$.
\end{example}

In general, it is challenging to check whether a given pair $(\sigma, \cup)$ is a rigid structure, but we succeed in obtaining a sufficient result in this direction (Proposition~\ref{prop:path-rigid}).  Before turning to that topic, our aim is to state and prove our criterion for non-closed-convexity (Theorem~\ref{thm:rigid}).  To do so, we need several definitions and lemmas.  We begin by formalizing how two rigid structures ``fit together''.

% DEFINITION OF DISTINGUISHED SUBCODE
\begin{definition} \label{def:subcode-from-r-and-c}
Let $\code$ be a code on $n$ neurons. 
Assume that 
$(\rig, \bigcirc)$ 
and 
$(\conn, \bigtriangleup)$ 
are rigid structures of $\code$.  
The {\em \distinguished subcode} arising from 
$(\rig, \bigcirc)$ 
and 
$(\conn, \bigtriangleup)$, 
%denoted by ???
is the neural code consisting of 
all codewords $\sigma \in \code$ such that:
\begin{enumerate}
    \item if 
        $\bigcirc= \cup$, 
        then $\rig \cap \sigma \neq \emptyset$, 
    \item if 
        $\bigcirc= \cap$, 
        then $\rig \subseteq \sigma$, 
    \item if 
        $\bigtriangleup= \cup$, 
        then $\conn \cap \sigma \neq \emptyset$, and
    \item if 
        $\bigtriangleup= \cap$, 
        then $\conn \subseteq \sigma$.
\end{enumerate}
\end{definition}

\begin{remark} \label{rem:conditions-match}
In Definition~\ref{def:subcode-from-r-and-c}, 
the roles of $\rig$ and $\conn$ are symmetric, that is, conditions (1) and (2) match conditions (3) and (4), respectively.
\end{remark}

\begin{remark} \label{rem:no-empty-codeword-dist-code}
Distinguished subcodes do not contain the empty codeword. 
\end{remark}

Examples of distinguished subcodes appear later in this section (see Table~\ref{tab:5-codes}).

%We need the next two lemmas to prove our criterion for non-closed-convexity.

\begin{lemma}[Realization of distinguished subcode] \label{lem:subcode-from-r-and-c}
Let $\code$ be a code on $n$ neurons. 
Assume that 
$(\rig, \bigcirc )$ and $(\conn, \bigtriangleup )$ 
%$(\rig, *_{\rig})$ and $(\conn, *_{\conn})$ 
are rigid structures of $\code$.  
Let $\code'$ be the \distinguished
subcode arising from 
$(\rig, \bigcirc )$ and $(\conn, \bigtriangleup )$.
%$(\rig, *_{\rig})$ and $(\conn, *_{\conn})$.  
Let $\fullreal$ be a realization of $\code$ in some $\mathbb{R}^d$.  Let 
\[
\mathcal{U}'
    ~=~ \{ U_i'\}_{i \in [n]}
    ~:=~ \left\{~ U_i 
                \cap 
                \left( \bigcirc_{j \in \rig} U_j  \right)
                \cap 
                \left( \bigtriangleup_{k \in \conn} U_k  \right)
                ~
                \right\} _{i \in [n]}~.
\]
Then $\code'$ is the code realized by $\mathcal{U}'$, that is, $\code' = {\rm code}(\mathcal{U'}, 
X)$,
%\mathbb{R}^d)$.
where $X= \cup_{i \in [n]} U_i'$.
\end{lemma}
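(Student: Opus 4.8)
The plan is to show the two codes $\code'$ and ${\rm code}(\mathcal{U}', X)$ coincide by comparing, for each $\sigma \subseteq [n]$, when $\sigma \in \code'$ with when the atom $\atomMacro{\mathcal{U}'}{\sigma}$ is nonempty. First I would unwind the definition of $U_i'$: each $U_i'$ is $U_i$ intersected with a fixed set $W := \left(\bigcirc_{j \in \rig} U_j\right) \cap \left(\bigtriangleup_{k \in \conn} U_k\right)$ that does not depend on $i$. Consequently $\bigcap_{i \in \sigma} U_i' = \left(\bigcap_{i \in \sigma} U_i\right) \cap W$ and $\bigcup_{j \notin \sigma} U_j' \subseteq \bigcup_{j \notin \sigma} U_j$, with the latter containing $\bigcup_{j \notin \sigma}(U_j \cap W)$. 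The key computation is that
\[
\atomMacro{\mathcal{U}'}{\sigma} ~=~ \left(\bigcap_{i \in \sigma} U_i\right) \cap W \smallsetminus \bigcup_{j \notin \sigma} U_j ~=~ \atomMacro{\mathcal{U}}{\sigma} \cap W~,
\]
where the first equality holds because removing $\bigcup_{j\notin\sigma} U_j$ from a subset of $W$ is the same as removing $\bigcup_{j\notin\sigma}(U_j \cap W)$ — any point of $W$ lying in some $U_j$ lies in $U_j \cap W$. So the question becomes: for which $\sigma$ is $\atomMacro{\mathcal{U}}{\sigma} \cap W \neq \emptyset$?

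Next I would analyze $W$ by cases on $\bigcirc$ and $\bigtriangleup$. If $\bigcirc = \cap$ then the factor $\bigcap_{j \in \rig} U_j$ is nonempty on a point precisely when that point lies in $U_j$ for all $j \in \rig$, i.e. when $\rig \subseteq \sigma$ for points of $\atomMacro{\mathcal{U}}{\sigma}$; if $\bigcirc = \cup$ then $\bigcup_{j \in \rig} U_j$ contains a point of $\atomMacro{\mathcal{U}}{\sigma}$ precisely when $\sigma \cap \rig \neq \emptyset$ (a point in the atom lies in $U_j$ iff $j \in \sigma$). The same dichotomy applies to $\bigtriangleup$ and $\conn$. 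Combining: a point of $\atomMacro{\mathcal{U}}{\sigma}$ lies in $W$ iff $\sigma$ satisfies the four conditions of Definition~\ref{def:subcode-from-r-and-c} corresponding to $\bigcirc$ and $\bigtriangleup$ — and if $\sigma$ satisfies those conditions, then $\atomMacro{\mathcal{U}}{\sigma} \subseteq W$ entirely, so $\atomMacro{\mathcal{U}}{\sigma} \cap W = \atomMacro{\mathcal{U}}{\sigma}$, while if $\sigma$ fails any one of them, $\atomMacro{\mathcal{U}}{\sigma} \cap W = \emptyset$. Therefore $\atomMacro{\mathcal{U}'}{\sigma} \neq \emptyset$ iff $\atomMacro{\mathcal{U}}{\sigma} \neq \emptyset$ and $\sigma$ satisfies the defining conditions, i.e. iff $\sigma \in \code$ and $\sigma$ meets conditions (1)--(4), which is exactly $\sigma \in \code'$.

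Finally I would note the edge case $\sigma = \emptyset$: by Remark~\ref{rem:no-empty-codeword-dist-code} the empty codeword is never in $\code'$, and consistently $\atomMacro{\mathcal{U}'}{\emptyset} = X \smallsetminus \bigcup_i U_i' = \emptyset$ since $X = \bigcup_i U_i'$ by hypothesis — so the two codes agree on $\emptyset$ as well. The main obstacle is not any single deep step but rather being careful with the set-theoretic bookkeeping in the identity $\atomMacro{\mathcal{U}'}{\sigma} = \atomMacro{\mathcal{U}}{\sigma} \cap W$ and in verifying the clean ``all-or-nothing'' behavior (either $\atomMacro{\mathcal{U}}{\sigma} \subseteq W$ or $\atomMacro{\mathcal{U}}{\sigma} \cap W = \emptyset$), which hinges on the fact that membership of atom points in any $U_j$ is completely determined by whether $j \in \sigma$; the rigidity hypotheses on $(\rig,\bigcirc)$ and $(\conn,\bigtriangleup)$ play no role in this lemma and are only recorded for later use.
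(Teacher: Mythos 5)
Your proposal is correct and takes essentially the same approach as the paper: translate the defining conditions of the distinguished subcode into the statement that $\atomMacro{\mathcal{U}}{\sigma}$ is nonempty and lies inside both $\bigcirc_{j\in\rig}U_j$ and $\bigtriangleup_{k\in\conn}U_k$, then identify this with nonemptiness of the corresponding $\mathcal{U}'$-atom. You simply fill in more of the set-theoretic bookkeeping (the identity $\atomMacro{\mathcal{U}'}{\sigma}=\atomMacro{\mathcal{U}}{\sigma}\cap W$, the all-or-nothing observation, and the $\sigma=\emptyset$ edge case) that the paper's two-line proof leaves implicit.
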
 

\begin{proof} %[Proof of Lemma~\ref{lem:subcode-from-r-and-c}]
Consider $\sigma \subseteq [n]$.  By definition, $\sigma \in \code'$ if and only if $\sigma \in \code$ and conditions~(1)--(4) in Definition~\ref{def:subcode-from-r-and-c} hold.  This is equivalent to the following conditions:
\[
\atomMacro{\calU}{\sigma} \neq \emptyset
    \quad \quad {\rm and} \quad \quad 
    \atomMacro{\calU}{\sigma} \subseteq \left( \bigcirc_{j \in \rig} U_j  \right)
    \quad \quad {\rm and} \quad \quad 
    \atomMacro{\calU}{\sigma} \subseteq \left( \bigtriangleup_{k \in \conn} U_k \right)~,
    \]
i.e.,  $\atomMacro{\calU}{\sigma} \cap 
\left( \bigcirc_{j \in \rig} U_j  \right)
\cap
\left( \bigtriangleup_{k \in \conn} U_k \right)
$ is nonempty.  Hence, $\sigma \in \code'$ if and only if $\sigma \in {\rm code}(\calU', X)$.
\end{proof}

\begin{definition} \label{def:order-forcing-graph}
The 
\textit{codeword-containment graph}
%\footnote{This is called the ``codeword graph'' in~\cite{order-forcing}.} 
% FOOTNOTE DELETED B/C THE AUTHORS OF~\CITE{ORDER-FORCING} UPDATED THE NAME OF THE GRAPH, TO MATCH OURS.
of a neural code $\code$ 
is the (undirected) graph with vertex set consisting of all codewords of $\code$ and edge set 
$\{ (\sigma,\tau)   \mid  \sigma \subsetneq \tau \textrm{ or } \sigma \supsetneq \tau \}$.
\end{definition}

\begin{example} \label{ex:order-forcing-gph}
The codeword-containment graph of $C6|_{1235}$ mirrors the top part of Figure~\ref{fig:C6-rigid}:

\begin{center}
    \begin{tikzpicture} [scale=1]
	\tikzstyle{vertex}=[circle,fill=black, inner sep = 1pt]
	\node[vertex, label=$15$] at (0,0) {};
	\node[vertex, label=$125$] at (1,0) {};
	\node[vertex, label=$12$] at (2,0) {};
	\node[vertex, label=$123$] at (3,0) {};
	\node[vertex, label=$23$] at (4,0) {};
    \draw (0,0) -- (4,0);    
\end{tikzpicture}
\end{center}
\end{example}

%The idea underlying the next result was conveyed to us by Caitlin Lienkaemper and Nora Youngs. 
The following lemma is essentially due to Jeffs, Lienkaemper, and Youngs~\cite[Lemma 2.1]{order-forcing}. 
%- or is their result specific to {\bf open} convexity?
\begin{lemma} \label{lem:path-order-forcing-gph}
Let $\fullreal$ be a collection of closed, convex sets in $\mathbb{R}^d$.  Let $\code = {\rm code}(\calU, X)$, for some stimulus space $X \subseteq \mathbb{R}^d$ that contains every $U_i$.  Let $L$ be a line segment in~$\mathbb{R}^d$, and let 
$\atomMacro{\calU}{ \sigma_1 }, 
\atomMacro{\calU}{ \sigma_2 }, \dots, 
\atomMacro{\calU}{ \sigma_q }$
be the atoms that $L$ intersects (in order, from one endpoint of $L$ to the other, so an atom may appear more than once in this list).  Then 
$(\sigma_1,\sigma_2)$, 
$(\sigma_2,\sigma_3)$, $\dots$, 
$(\sigma_{q-1},\sigma_q)$ are edges in 
%$(\sigma_1,\sigma_2,\dots, \sigma_q)$ 
%is a path in 
the codeword-containment graph of $\code$.
\end{lemma}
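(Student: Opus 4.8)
The plan is to argue along the line segment $L$, moving from one endpoint to the other, and show that each time the segment passes from one atom to the next, the two atoms are comparable under inclusion, so that consecutive $\sigma_i$'s form edges in the codeword-containment graph. The essential geometric input is convexity: for each neuron $i$, the intersection $U_i \cap L$ is convex (an interval, possibly empty, possibly a single point), because $U_i$ is convex and $L$ is a segment.

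First I would parametrize $L$ by $\gamma : [0,1] \to \mathbb{R}^d$, so that $\gamma(0)$ and $\gamma(1)$ are the endpoints. For each $i \in [n]$, the preimage $\gamma^{-1}(U_i) \subseteq [0,1]$ is a convex subset of $[0,1]$, hence an interval $I_i$ (relatively closed in $[0,1]$, since $U_i$ is closed). For a parameter $t \in [0,1]$, the point $\gamma(t)$ lies in the atom $\atomMacro{\calU}{\sigma}$ where $\sigma = \{ i \in [n] \mid t \in I_i \}$; call this set $\sigma(t)$. Thus as $t$ ranges over $[0,1]$, the ``active neuron set'' $\sigma(t)$ changes, and the list $\atomMacro{\calU}{\sigma_1}, \dots, \atomMacro{\calU}{\sigma_q}$ records the distinct consecutive values, with $\sigma_j = \sigma(t)$ for $t$ in the $j$-th piece of a partition of $[0,1]$ into finitely many intervals.

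The key step is the following local claim: if $\sigma_j$ and $\sigma_{j+1}$ are the active sets on two consecutive pieces, meeting at a transition parameter $t^*$, then $\sigma_j$ and $\sigma_{j+1}$ are comparable. To see this, consider the transition point $t^*$: each interval $I_i$ either contains $t^*$ or does not, and because each $I_i$ is an interval, the neurons that are ``active just to the left of $t^*$ but not at $t^*$'' and those ``active at $t^*$ but not just to the left'' cannot both be nonempty — an interval cannot both end and begin at the same interior point from the same side. More carefully: write $\sigma^- = \sigma(t^* - \eps)$ and $\sigma^+ = \sigma(t^* + \eps)$ for small $\eps>0$, and let $\sigma^0 = \sigma(t^*)$. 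Since each $I_i$ is closed, $i \in \sigma^-$ implies $t^* \in \overline{I_i \cap [t^*-\eps, t^*)}$, hence (as $I_i$ is closed) $t^* \in I_i$, so $\sigma^- \subseteq \sigma^0$; likewise $\sigma^+ \subseteq \sigma^0$. Also an $i$ with $t^* \in I_i$ but $i \notin \sigma^-$ means $I_i \cap [t^*-\eps, t^*) = \emptyset$, i.e. $I_i$ starts at $t^*$, so $I_i \supseteq [t^*, t^*+\eps']$ for some $\eps'>0$, forcing $i \in \sigma^+$; symmetrically, $i \in \sigma^0 \smallsetminus \sigma^+$ forces $i \in \sigma^-$. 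Combining, $\sigma^0 = \sigma^- \cup \sigma^+$, and moreover $\sigma^0 \smallsetminus \sigma^- \subseteq \sigma^+$ and $\sigma^0 \smallsetminus \sigma^+ \subseteq \sigma^-$. A short case analysis then yields that $\sigma^-, \sigma^0, \sigma^+$ are pairwise comparable (in fact $\sigma^- \subseteq \sigma^0$ and $\sigma^+ \subseteq \sigma^0$ always, and one of $\sigma^- \subseteq \sigma^+$ or $\sigma^+ \subseteq \sigma^-$ need not hold, but since the list of consecutive distinct atoms inserts $\sigma^0$ between $\sigma^-$ and $\sigma^+$ whenever $\sigma^0$ differs from both, every consecutive pair in the final list $\sigma_1, \dots, \sigma_q$ is a strict inclusion one way or the other). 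Hence each consecutive pair $(\sigma_{j}, \sigma_{j+1})$ satisfies $\sigma_j \subsetneq \sigma_{j+1}$ or $\sigma_j \supsetneq \sigma_{j+1}$, which is exactly an edge of the codeword-containment graph.

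The main obstacle I anticipate is bookkeeping at the transition points — making precise that between two consecutive distinct atoms in the list there is always at most one ``intermediate'' active set, and that whether or not we record the value $\sigma^0$ at the single transition parameter $t^*$, consecutive recorded atoms are strictly comparable. This is handled cleanly by the closedness of each $I_i$ (so that $\sigma^0 \supseteq \sigma^-$ and $\sigma^0 \supseteq \sigma^+$) together with the interval property (so that any neuron in $\sigma^0$ lies in $\sigma^- \cup \sigma^+$), reducing everything to the elementary observation that a neuron's ``on-interval'' cannot simultaneously close and open at $t^*$. One should also note that $L$ intersects only finitely many atoms: the endpoints of the intervals $I_1, \dots, I_n$ partition $[0,1]$ into finitely many subintervals on each of which $\sigma(t)$ is constant, giving the finite list; and since $L \subseteq X$ (as $X$ contains every $U_i$, and any point of $L$ not in $\cup U_i$ lies in the atom of $\emptyset$, a valid codeword by Assumption~\ref{assumption:emtpy-codeword}), every $\sigma_j$ is indeed a codeword of $\code$. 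This completes the argument.
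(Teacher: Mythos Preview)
Your approach is essentially identical to the paper's: both partition $L$ via the endpoints of the closed intervals $U_i \cap L$ and observe that the codeword at each breakpoint contains the codewords on either side (your $\sigma^- \subseteq \sigma^0 \supseteq \sigma^+$ is the paper's $\nu_{j-1} \subseteq \tau_j \supseteq \nu_j$). One minor glitch worth cleaning up: your intermediate claim $\sigma^0 = \sigma^- \cup \sigma^+$ fails when some $I_i = \{t^*\}$ is a single point, but you never actually use it---the containments $\sigma^\pm \subseteq \sigma^0$ alone suffice, exactly as in the paper.
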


\begin{proof} 
Each intersection $U_i \cap L$ is either empty or a closed interval in $L$.  Label the set of all endpoints of these intervals as follows:

\begin{center}
    \begin{tikzpicture} [scale=1]
	\tikzstyle{vertex}=[circle,fill=black, inner sep = 1pt]
	\node[vertex, label=$p_0$] at (0,0) {};
	\node[vertex, label=$p_1$] at (1,0) {};
	\node[vertex, label=$p_2$] at (2,0) {};
	\node[ label=$\dots$] at (3,0) {};
	\node[vertex, label=$p_T$] at (5,0) {};
	\node[label=$L$] at (6,-0.4) {};
    \draw (0,0) -- (5,0);    
\end{tikzpicture}
\end{center}

By construction, each open interval $(p_j, p_{j+1} )$, for $j=0,1,\dots, T-1$, is contained in some atom $\atomMacro{\calU}{\nu_j}$ (with $\nu_j \in \code$).  So, it suffices to show that the atom containing the endpoint $p_j$, denoted by $\atomMacro{\calU}{\tau_j}$, satisfies $\nu_j \subseteq \tau_j$ (and so, by symmetry, $\nu_j \subseteq \tau_{j+1}$ also holds).

Let $i \in [n]$.  We consider three cases.  
If $p_j$ is the right endpoint of $U_i \cap L$, then $i \in (\tau_j \smallsetminus \nu_j)$.  
If $p_j$ is the left endpoint of $U_i \cap L$, then $i$ is in both $\tau_j $ and $ \nu_j$.  
If $p_j$ is not an endpoint of $U_i \cap L$, then $i$ is 
    either in both $\tau_j $ and $ \nu_j$ or in neither set.
We therefore obtain the desired containment $\nu_j \subseteq \tau_j$.
\end{proof}

The intuition behind the next result is shown in Figure~\ref{fig:intuition-behind-thm}: When the intersection between two rigid structures $\rig$ and $\conn$ is disconnected (more precisely, when the atoms they have in common generate a disconnected codeword-containment graph), then one of the structures cannot be convex and hence the code is not closed-convex.

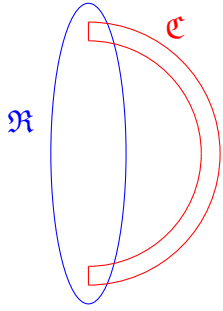
\begin{figure}[ht]
    \centering
    \begin{tikzpicture}
        \draw[color = blue] (2,2) ellipse (0.5 cm and 2 cm);
        \draw[color=red] (3.5,2) arc (0:90:1.5 cm);
        \draw[color=red] (3.5,2) arc (0:-90:1.5 cm);
        \draw[color=red] (3.75,2) arc (0:90:1.75 cm);
        \draw[color=red] (3.75,2) arc (0:-90:1.75 cm);
        \draw[color=red] (2,3.5) -- (2,3.75);
        \draw[color=red] (2,0.25) -- (2,0.5);
        \node[label = {\color{blue}$\mathfrak{R}$}] at (1.1,2) {};
        \node[label = {\color{red}$\mathfrak{C}$}] at (3.15,3.25) {};        
    \end{tikzpicture}
    \caption{Idea behind Theorem~\ref{thm:rigid}: 
    If the intersection of two rigid structures $\rig$ and $\conn$ is disconnected, then at least one of $\rig$ and $\conn$ is non-convex.}
%    : The diagram demonstrates the relation (and ensuing contradiction) between two rigid structures $\mathfrak{R}$ and $\mathfrak{C}$ should they satisfy the Theorem's conditions. In any realization by closed convex sets of $\calC$, the intersection of $\mathfrak{R}$ and $\mathfrak{C}$ would be disconnected, contradicting the convexity of the intersection of convex sets. }
    \label{fig:intuition-behind-thm}
\end{figure}

\begin{theorem}[Criterion for precluding closed-convexity]\label{thm:rigid}
Let $\code$ be a code on $n$ neurons. 
Assume that %$(\rig, *_{\rig})$ 
$(\rig, \bigcirc)$ 
and %$(\conn, *_{\conn})$ 
$(\conn, \bigtriangleup )$ 
are rigid structures of $\code$.  
Let $\code'$ be the \distinguished
subcode arising from 
$(\rig, \bigcirc )$ and $(\conn, \bigtriangleup )$.
%$(\rig, *_{\rig})$ and $(\conn, *_{\conn})$. 
If the codeword-containment graph of $\code'$ is disconnected, then $\code$ is not closed-convex.
\end{theorem}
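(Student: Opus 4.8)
The plan is to argue by contradiction: suppose $\code$ has a closed-convex realization $\fullreal$ in some $\mathbb{R}^d$, and derive a contradiction from the disconnectedness of the codeword-containment graph of $\code'$. First I would pass to the realization $\mathcal{U}' = \{U_i'\}_{i \in [n]}$ of the distinguished subcode $\code'$ produced by Lemma~\ref{lem:subcode-from-r-and-c}, where $U_i' := U_i \cap (\bigcirc_{j\in\rig}U_j) \cap (\bigtriangleup_{k\in\conn}U_k)$; note each $U_i'$ is still closed and convex, being an intersection of closed convex sets. The key geometric objects are $R := \bigcirc_{j\in\rig}U_j$ and $C := \bigtriangleup_{k\in\conn}U_k$. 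Because $(\rig,\bigcirc)$ is a rigid structure and $(\conn,\bigtriangleup)$ is a rigid structure, both $R$ and $C$ are convex: if $\bigcirc=\cap$ this is automatic, and if $\bigcirc=\cup$ this is exactly the defining property of a rigid structure (applied to the restriction $\code|_{\sigma'}$ realized by the appropriate subcollection of $\mathcal{U}$ — here I would need to check that restricting $\mathcal{U}$ to the index set $\sigma'$ indeed realizes $\code|_{\sigma'}$, which follows from Definition~\ref{def:realize} since atoms of the restricted collection are unions of atoms of $\mathcal{U}$). So $R\cap C$ is convex.

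Next I would observe that the support of $\mathcal{U}'$, namely $\bigcup_{i\in[n]} U_i'$, is contained in $R\cap C$; in fact every nonempty atom $\atomMacro{\mathcal{U}'}{\sigma}$ with $\sigma\in\code'$ lies inside $R\cap C$. The heart of the argument is then a connectivity statement: I claim that for a collection of closed convex sets, the codeword-containment graph of the realized code, restricted to codewords whose atoms meet a fixed convex set $K$, must be connected whenever the union of those atoms is all of a convex set — more to the point, take any two codewords $\sigma,\tau\in\code'$, pick points $p\in\atomMacro{\mathcal{U}'}{\sigma}$ and $q\in\atomMacro{\mathcal{U}'}{\tau}$; since $p,q\in R\cap C$ and $R\cap C$ is convex, the line segment $L$ from $p$ to $q$ lies in $R\cap C$, hence in the union of atoms of $\mathcal{U}'$ that it meets. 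By Lemma~\ref{lem:path-order-forcing-gph} applied to the closed convex collection $\mathcal{U}'$ (with stimulus space $X = \bigcup U_i'$), the atoms $L$ passes through, in order, form a walk $\sigma=\sigma_1,\sigma_2,\dots,\sigma_q=\tau$ in the codeword-containment graph of $\code'$. Therefore $\sigma$ and $\tau$ lie in the same connected component — so the codeword-containment graph of $\code'$ is connected, contradicting the hypothesis.

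The main obstacle, and the step that needs the most care, is the application of the rigidity hypothesis to the sub-realization: I must verify that when $\bigcirc=\cup$, the collection $\{U_j\}_{j\in\sigma'}$ (the restriction of $\mathcal{U}$ to $\sigma'$, where $\sigma\subseteq\sigma'\subseteq[n]$ is the witnessing set from Definition~\ref{def:rigid}(2)) genuinely realizes $\code|_{\sigma'}$ in $\mathbb{R}^d$, so that Definition~\ref{def:rigid}(2) forces $\bigcup_{i\in\sigma}U_i$ to be convex. This is where Definition~\ref{def:restricted} and the behavior of atoms under restriction (atoms of the restricted collection are disjoint unions of atoms $\atomMacro{\mathcal{U}}{\sigma}$ over $\sigma$ with $\sigma\cap\sigma'$ fixed) get used, and one should be mildly careful that the stimulus space assumption (Assumption~\ref{assumption:emtpy-codeword}, $X=\mathbb{R}^d$) is consistent throughout. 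A secondary, smaller point is confirming $\mathcal{U}'$ is a legitimate closed-convex realization with stimulus space $X=\bigcup U_i'$ so that Lemma~\ref{lem:path-order-forcing-gph} applies verbatim; this is immediate from Lemma~\ref{lem:subcode-from-r-and-c}, since closedness and convexity of each $U_i'$ are clear and $X$ contains every $U_i'$ by definition. Everything else is a direct chaining of Lemmas~\ref{lem:subcode-from-r-and-c} and~\ref{lem:path-order-forcing-gph} together with convexity of $R\cap C$.
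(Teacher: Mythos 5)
Your proof is correct and uses the same two ingredients as the paper's proof — Lemma~\ref{lem:subcode-from-r-and-c} and Lemma~\ref{lem:path-order-forcing-gph} — but it routes the contradiction differently. The paper keeps the original realization $\mathcal{U}$ with stimulus space $\mathbb{R}^d$, applies Lemma~\ref{lem:path-order-forcing-gph} to get a walk in the codeword-containment graph of the \emph{full} code $\code$, then argues that since $\sigma$ and $\tau$ lie in distinct components of the $\code'$-graph, the walk must pass through some atom $\atomMacro{\mathcal{U}}{\nu}$ with $\nu\notin\code'$; that atom misses $\bigcirc_{j\in\rig}U_j$ or $\bigtriangleup_{k\in\conn}U_k$, so one of them is non-convex, contradicting rigidity. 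You instead use rigidity \emph{up front} to conclude that $R=\bigcirc_{j\in\rig}U_j$ and $C=\bigtriangleup_{k\in\conn}U_k$ are convex, hence $L\subseteq R\cap C = \bigcup_i U_i'$, which lets you apply Lemma~\ref{lem:path-order-forcing-gph} directly to the closed-convex collection $\mathcal{U}'$ with stimulus space $X'=\bigcup U_i'$; the resulting walk lives inside the codeword-containment graph of $\code'$, giving connectedness and thereby contradicting the disconnectedness hypothesis instead. The two arguments are contrapositives of each other along the implication ``$R$ and $C$ convex $\Rightarrow$ $\code'$-graph connected.'' One small point you gloss over is the inclusion $R\cap C \subseteq \bigcup_i U_i'$, which is what actually licenses ``$L$ lies in the union of atoms of $\mathcal{U}'$'': it follows because $R\subseteq\bigcup_{i\in[n]}U_i$ in both cases $\bigcirc=\cap$ and $\bigcirc=\cup$, and then any $x\in R\cap C$ lying in some $U_i$ satisfies $x\in U_i\cap R\cap C = U_i'$. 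Your flagged worry about restricting the realization to $\sigma'$ is legitimate and resolves exactly as you say: the atoms of $\{U_i\}_{i\in\sigma'}$ are unions of atoms of $\mathcal{U}$ over codewords with fixed trace on $\sigma'$, so this restricted collection realizes $\code|_{\sigma'}$ and Definition~\ref{def:rigid}(2) applies.
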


\begin{proof}
Assume that 
$(\rig, \bigcirc )$ and $(\conn, \bigtriangleup )$ 
%$(\rig, *_{\rig})$ and $(\conn, *_{\conn})$ 
are rigid structures of $\code$, 
and assume that the resulting \distinguished
subcode, denoted by $\code'$, is disconnected.

Suppose for contradiction that there is a closed-convex realization $\fullreal$ of~$\code$. 
Let $\sigma$ and $\tau$ be %(nonempty) 
codewords of~$\code$ in distinct connected components of the codeword-containment graph of~$\code'$.
Let $L$ be a line segment from a point in the atom
$\atomMacro{\realiz}{\sigma}$ to a point in the atom
$\atomMacro{\realiz}{\tau}$.
By Lemma~\ref{lem:subcode-from-r-and-c}, both endpoints of $L$ are in 
$\left( \bigcirc_{j \in \rig} U_j  \right)
                \cap 
                \left( \bigtriangleup_{k \in \conn} U_k  \right)$.
Also, by Lemma~\ref{lem:path-order-forcing-gph}
(and the fact that $\sigma$ and $\tau$ are not in the same connected component of the codeword-containment graph of $\code'$),
$L$ intersects some atom $\atomMacro{\mathcal{U}}{\nu}$ for some codeword $\nu \in \code$ that is not
a vertex of the codeword-containment graph of $\code'$.  Thus, $\nu$ is a non-codeword of $\code'$.  

Hence, by Lemma~\ref{lem:subcode-from-r-and-c}, 
the atom $\atomMacro{\mathcal{U}}{\nu}$ 
%(of the original code $\code$) 
does not intersect 
$\bigcirc_{j \in \rig} U_j  $ or does not intersect $\bigtriangleup_{j \in \conn} U_j $.  
We conclude that $L$, which by assumption intersects 
 $\atomMacro{\mathcal{U}}{\nu}$, is not fully contained in 
$\bigcirc_{j \in \rig} U_j  $ or is not fully contained in $
                 \bigtriangleup_{j \in \conn} U_j $ (even though the endpoints of $L$ are in those sets).  Thus, $\bigcirc_{j \in \rig} U_j  $ or $
                 \bigtriangleup_{j \in \conn} U_j $ is not convex, and so 
     %             $(\rig, *_{\rig})$ or $(\conn, *_{\conn})$ 
    $(\rig, \bigcirc )$ or $(\conn, \bigtriangleup )$
    is not rigid (Definition~\ref{def:rigid}).  
                  This is a contradiction.  
\end{proof}

\begin{remark} \label{rem:connector}
The notation $\conn$
was chosen because we view one of the rigid structures as a ``connector'' of two ends of the other rigid structure $\rig$.
\end{remark}

The following corollary is the case of Theorem~\ref{thm:rigid} when the two rigid structures coincide.

\begin{corollary} \label{cor:rigid-structures-equal}
Let $\code$ be a code on $n$ neurons. 
Assume that  
$(\rig, \bigcirc)$ 
is a rigid structure of $\code$.  
Let $\code'$ be the neural code consisting of 
all codewords $\sigma \in \code$ such that:
(1) if 
        $\bigcirc= \cup$, 
        then $\rig \cap \sigma \neq \emptyset$; and (2) if 
        $\bigcirc= \cap$, 
        then $\rig \subseteq \sigma$.
If the codeword-containment graph of $\code'$ is disconnected, then $\code$ is not closed-convex.
\end{corollary}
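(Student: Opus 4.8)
The plan is to deduce this directly from Theorem~\ref{thm:rigid} by taking the two rigid structures to coincide. Concretely, I would set $(\conn, \bigtriangleup) := (\rig, \bigcirc)$, which is legitimate since $(\rig, \bigcirc)$ is assumed to be a rigid structure of $\code$, and then simply invoke Theorem~\ref{thm:rigid}. The only thing that needs to be verified is that the \distinguished subcode arising from the pair $(\rig, \bigcirc)$ and $(\rig, \bigcirc)$ is exactly the code $\code'$ described in the corollary's statement. Reading off Definition~\ref{def:subcode-from-r-and-c} with $\conn = \rig$ and $\bigtriangleup = \bigcirc$: conditions (3) and (4) become literal restatements of conditions (1) and (2) (this is precisely the symmetry noted in Remark~\ref{rem:conditions-match}), so the four conditions collapse to just the two conditions in the corollary. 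Hence the \distinguished subcode for $(\rig,\bigcirc)$ and $(\rig,\bigcirc)$ is the $\code'$ of the corollary, and Theorem~\ref{thm:rigid} says that if the codeword-containment graph of this $\code'$ is disconnected, then $\code$ is not closed-convex.

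The key steps, in order, are: (1) observe $(\rig,\bigcirc)$ can legitimately play the role of both rigid structures in Theorem~\ref{thm:rigid}; (2) unwind Definition~\ref{def:subcode-from-r-and-c} in this special case to see that conditions (1)--(2) and (3)--(4) are identical, so the \distinguished subcode equals the stated $\code'$; (3) apply Theorem~\ref{thm:rigid} verbatim to conclude. Each of these is essentially bookkeeping, so the proof should be only two or three sentences.

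There is no real obstacle here — the statement is explicitly advertised as ``the case of Theorem~\ref{thm:rigid} when the two rigid structures coincide,'' so the entire content is matching the hypotheses. If anything, the only point worth a moment's care is making sure that in Theorem~\ref{thm:rigid} the two rigid structures are allowed to be equal (nothing in Definition~\ref{def:rigid} or Definition~\ref{def:subcode-from-r-and-c} forbids $\rig = \conn$ or $\bigcirc = \bigtriangleup$), and that Lemma~\ref{lem:subcode-from-r-and-c} and the proof of Theorem~\ref{thm:rigid} go through unchanged in that degenerate case; they do, since neither proof ever uses that the two structures are distinct.
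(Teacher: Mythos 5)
Your proof is correct and is precisely what the paper intends: the corollary is stated in the paper as ``the case of Theorem~\ref{thm:rigid} when the two rigid structures coincide,'' with no further proof given, and your argument of setting $(\conn,\bigtriangleup):=(\rig,\bigcirc)$ and observing that Definition~\ref{def:subcode-from-r-and-c} then collapses to the stated $\code'$ is exactly that specialization. Nothing in Definition~\ref{def:subcode-from-r-and-c}, Lemma~\ref{lem:subcode-from-r-and-c}, or the proof of Theorem~\ref{thm:rigid} requires the two rigid structures to be distinct, so the reduction is valid as you note.
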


The next result represents a special case of Theorem~\ref{thm:rigid}: when the codeword-containment graph of a code is a cycle and a certain triplewise-intersection condition holds.

\begin{theorem}[Cycle criterion for precluding closed-convexity] \label{thm:rigid-when-gph-is-cycle}
Let $\code$ be a code on $n$ neurons. 
Assume that the codeword-containment graph of $\code \smallsetminus \{ \emptyset \}$ is a cycle:

\begin{center}
    \begin{tikzpicture}[scale=1]
    \draw[black]
    ({sin(5*\y)},{cos(5*\y)})
    --
    (0,1)--({sin(\y)},{cos(\y)})--({sin(2*\y)},{cos(2*\y)})--({sin(3*\y)},{cos(3*\y)})--({sin(4*\y)},{cos(4*\y)}) ;
    \draw[dotted] 
    ({sin(5*\y)},{cos(5*\y)})--({sin(4*\y)},{cos(4*\y)});
	\tikzstyle{vertex}=[circle,fill=black, inner sep = 1pt]
	\node[vertex, label=$\sigma_1$] at (0,1) {};    	\node[vertex, label=$\sigma_2$] at ({sin(\y)},{cos(\y)}) {};   
	\node[vertex, label=right:$\sigma_3$] at ({sin( 2*\y)},{cos( 2*\y)}) {};   
	\node[vertex, label=below:$\sigma_4$] at ({sin( 3*\y)},{cos( 3*\y)}) {};   
	\node[vertex, label=below:$\sigma_5$] at ({sin( 4*\y)},{cos( 4*\y)}) {};   
	\node[vertex, label=$\sigma_q$] at ({sin( 5*\y)},{cos( 5*\y)}) {};   
	\end{tikzpicture}
\end{center}

If, additionally, 
   for 
    all $i=1,2,\dots,q$, 
    the intersection $\sigma_i \cap \sigma_{i+1} \cap \sigma_{i+2}$ is nonempty (where 
    $\sigma_{q+1}:=\sigma_1$ and $\sigma_{q+2}:= \sigma_2$), 
    %$i+1$ and $i+2$ are taken modulo $q$), 
    then $\code$ is not closed-convex.
\end{theorem}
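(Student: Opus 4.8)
The plan is to deduce Theorem~\ref{thm:rigid-when-gph-is-cycle} from Theorem~\ref{thm:rigid} by exhibiting a single rigid structure $(\rig, \cup)$ built from a contiguous arc of the cycle, and then showing that the corresponding distinguished subcode (taking $\conn = \rig$, $\bigtriangleup = \bigcirc = \cup$, as in Corollary~\ref{cor:rigid-structures-equal}) has a disconnected codeword-containment graph. So the crux is twofold: first, to find an arc of the cycle whose union of atoms is forced to be convex in every closed-convex realization; second, to check that restricting the cycle to the codewords meeting $\rig$ breaks the cycle into (at least) two pieces.

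\textbf{Setting up the rigid structure.} First I would look for an arc $\sigma_i, \sigma_{i+1}, \dots, \sigma_{i+k}$ of the cycle and a neuron (or small set of neurons) $\rig$ such that $\rig$ is contained in exactly the codewords on that arc. The triplewise-intersection hypothesis $\sigma_i \cap \sigma_{i+1} \cap \sigma_{i+2} \neq \emptyset$ is exactly what lets me pick, for each consecutive triple, a common neuron; I would try to use these to patch together a neuron appearing along a maximal stretch. The key geometric claim is then: if every nonempty codeword of a (restricted) code $\code|_{\sigma'}$ that contains $\rig$ corresponds to atoms lying along an arc, and the codeword-containment graph restricted to those codewords is a \emph{path}, then $\cup_{i \in \rig} U_i$ is convex in every closed-convex realization. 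This is where I expect to invoke Proposition~\ref{prop:path-rigid} (stated later, on paths), which presumably says precisely that a path-shaped codeword-containment graph forces the union of the relevant receptive fields to be an interval/convex. The work is in verifying that the cycle hypothesis plus the triplewise condition produces such a path after restriction.

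\textbf{Disconnectedness of the distinguished subcode.} Once $(\rig, \cup)$ is a rigid structure, I apply Corollary~\ref{cor:rigid-structures-equal}: let $\code'$ consist of all $\sigma \in \code$ with $\rig \cap \sigma \neq \emptyset$. I must show the codeword-containment graph of $\code'$ is disconnected. Since the full cycle has vertex set $\sigma_1, \dots, \sigma_q$ and edges only between consecutive (nested) pairs, removing the vertices that \emph{miss} $\rig$ deletes a nonempty set of vertices, severing the cycle. The only way $\code'$ stays connected is if the surviving vertices form a single contiguous arc — but if $\rig$ were to meet a contiguous arc and that arc were the whole cycle minus one vertex, that would still be a path (connected), so I actually need $\rig$ to be \emph{missed} by at least two codewords that are not adjacent on the cycle, or to be present on two disjoint arcs. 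Here is where the argument must be done carefully: I would choose $\rig$ so that it lies in $\sigma_i$ but not in $\sigma_{i-1}$ or $\sigma_{i+1}$ in a way that splits things, or alternatively take $\rig = \sigma_1 \cap \sigma_2$ or a single well-chosen neuron and track, via the nesting structure of the cycle, which codewords contain it. The nesting structure of a cyclic codeword-containment graph is quite rigid (consecutive codewords alternate being contained in one another, so the cycle looks like $\sigma_1 \subsetneq \sigma_2 \supsetneq \sigma_3 \subsetneq \cdots$), and I would exploit this: a neuron present in a ``local minimum'' codeword $\sigma_j$ but absent from its two neighbors is present in no other codeword, giving an isolated vertex in $\code'$ — immediately disconnecting it (as long as $\code'$ has another vertex, which the triplewise condition guarantees).

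\textbf{Main obstacle.} The hard part will be the bookkeeping that simultaneously makes $(\rig,\cup)$ rigid (needing the surviving-arc codeword-containment graph to be a path, i.e.\ not a cycle and not disconnected) and makes $\code'$ disconnected — these pull in opposite directions, so the right choice of $\rig$ is delicate, and I expect to need the triplewise-intersection hypothesis in an essential way precisely to thread this needle (it prevents the ``path'' from being too short to be useful and supplies the common neurons). I would also need to handle the restriction $\sigma' \subsetneq [n]$ versus $[n]$ carefully (Remark~\ref{rem:sigma-vs-sigma'}): likely $\sigma'$ is chosen as the arc's support so that $\code|_{\sigma'}$ genuinely has closed-convex realizations whose structure Proposition~\ref{prop:path-rigid} controls. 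Modulo those details, the logical skeleton is: pick arc and neuron $\to$ apply Proposition~\ref{prop:path-rigid} to get rigidity $\to$ observe $\code'$ is disconnected via the cyclic nesting structure $\to$ apply Corollary~\ref{cor:rigid-structures-equal}.
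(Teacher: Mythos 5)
Your plan is blocked by a tension you yourself diagnose correctly but never resolve. You want to prove rigidity of $(\rig,\cup)$ via Proposition~\ref{prop:path-rigid}, whose hypothesis is precisely that the codeword-containment graph of $\{\tau\in\code:\tau\cap\rig\neq\emptyset\}$ is a path, and then to invoke Corollary~\ref{cor:rigid-structures-equal} on $\code' = \{\tau\in\code:\tau\cap\rig\neq\emptyset\}$, which requires that same graph to be disconnected. These are the \emph{same} code $\code'$, so one condition says its codeword-containment graph is a path (connected) and the other says it is disconnected; both cannot hold. The ``isolated local minimum'' idea does not save this: a local minimum $\sigma_j$ in the cycle satisfies $\sigma_{j-1}\supsetneq\sigma_j\subsetneq\sigma_{j+1}$, so every neuron of $\sigma_j$ also lies in both neighbors, and no isolated vertex can arise from $\sigma_j$. (A local \emph{maximum} could carry a private neuron, but then the graph of $\{\tau:\tau\cap\rig\neq\emptyset\}$ would fail to be a path and Proposition~\ref{prop:path-rigid} would not apply.)

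The missing idea is to use two rigid structures with \emph{different} operations, which the paper does. Pick $\sigma_1$ maximal; by the cycle-and-alternation structure, $\sigma_1\supsetneq\sigma_2\subsetneq\sigma_3$, and $\sigma_1,\sigma_3$ are the only codewords containing $\sigma_2$, while no nonempty codeword is properly contained in $\sigma_2$. Take $\rig:=\sigma_2$ with the operation $\cap$ — this is \emph{automatically} a rigid structure, because an intersection of convex sets is convex, a possibility your proposal never uses. Take $\conn:=[n]\smallsetminus\sigma_2$ with $\cup$. Every nonempty codeword other than $\sigma_2$ meets $\conn$, so the codeword-containment graph of $\{\tau:\tau\cap\conn\neq\emptyset\}$ is the path $\sigma_3-\sigma_4-\cdots-\sigma_q-\sigma_1$ obtained by deleting the vertex $\sigma_2$ from the cycle; the triplewise hypothesis then makes $(\conn,\cup)$ rigid by Proposition~\ref{prop:path-rigid}. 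The distinguished subcode of Definition~\ref{def:subcode-from-r-and-c} is $\{\tau:\sigma_2\subseteq\tau\text{ and }\tau\cap\conn\neq\emptyset\}=\{\sigma_1,\sigma_3\}$ (the $\cap$-condition filters), which is incomparable and hence disconnected, and Theorem~\ref{thm:rigid} closes the argument. The key maneuver you missed is that the $\cap$-rigid structure costs nothing to establish yet imposes the containment constraint $\sigma_2\subseteq\tau$ that shrinks the distinguished subcode to the two incomparable neighbors of $\sigma_2$; using the same $\rig$ for both roles with $\cup$ cannot achieve this. (The paper also separately disposes of $q=3$ before the main argument, which your sketch does not address.)
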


The proof of Theorem~\ref{thm:rigid-when-gph-is-cycle} appears in Section~\ref{sec:proof-cycle}; we first need results (in the next subsection) that help us check whether a given a subset of neurons is rigid.  
Indeed, in general, being able to check rigidity is the main difficulty in applying Theorem~\ref{thm:rigid}.

\begin{example} \label{ex:cycle}
The codeword-containment graphs of $C15 \smallsetminus \{\emptyset \}$ and $\CruzCode \smallsetminus \{\emptyset \}$ are, respectively, the following cycle graphs:
\begin{center}
    \begin{tikzpicture}[scale=1.4]
    \draw[black]
    (0,1)--({sin(\z)},{cos(\z)})--({sin(2*\z)},{cos(2*\z)})--({sin(3*\z)},{cos(3*\z)})--({sin(4*\z)},{cos(4*\z)})
    --
    ({sin(5*\z)},{cos(5*\z)})
    --
    ({sin(6*\z)},{cos(6*\z)})
    --
    ({sin(7*\z)},{cos(7*\z)})
    --
    ({sin(8*\z)},{cos(8*\z)})
    --
    ({sin(9*\z)},{cos(9*\z)})
    --
    ({sin(10*\z)},{cos(10*\z)})
    --
    ({sin(11*\z)},{cos(11*\z)})
    --
	(0,1);
     ;
	\tikzstyle{vertex}=[circle,fill=black, inner sep = 1pt]
	\node[vertex, label=$125 $] at (0,1) {};    	
	\node[vertex, label=right:$15 $] at ({sin(\z)},{cos(\z)}) {};   
	\node[vertex, label=right:$145 $] at ({sin( 2*\z)},{cos( 2*\z)}) {};   
	\node[vertex, label=right:$ 45$] at ({sin( 3*\z)},{cos( 3*\z)}) {};   
	\node[vertex, label=right:$ 345$] at ({sin( 4*\z)},{cos( 4*\z)}) {};   
	\node[vertex, label=below:$ 34$] at ({sin( 5*\z)},{cos( 5*\z)}) {};   
	\node[vertex, label=left:$ 234$] at ({sin( 6*\z)},{cos( 6*\z)}) {};   
	\node[vertex, label=left:$ 23$] at ({sin( 7*\z)},{cos( 7*\z)}) {};   
	\node[vertex, label=left:$ 123$] at ({sin( 8*\z)},{cos( 8*\z)}) {};   
	\node[vertex, label=left:$ 12$] at ({sin( 9*\z)},{cos( 9*\z)}) {};   
    % CYCLE 2
      \draw[black]
    (4,1)--({4+sin(\w)},{cos(\w)}) -- ({4+sin(2*\w)},{cos(2*\w)})--({4+sin(3*\w)},{cos(3*\w)})--({4+sin(4*\w)},{cos(4*\w)})
    --
    ({4+sin(5*\w)},{cos(5*\w)})
    --
    ({4+sin(6*\w)},{cos(6*\w)})
    --
    ({4+sin(7*\w)},{cos(7*\w)})
    --
    ({4+sin(8*\w)},{cos(8*\w)})
    --
    ({4+sin(9*\w)},{cos(9*\w)})
    --
    ({4+sin(10*\w)},{cos(10*\w)})
    --
    ({4+sin(11*\w)},{cos(11*\w)})
    --
	(4,1);
     ;
	\tikzstyle{vertex}=[circle,fill=black, inner sep = 1pt]
	\node[vertex, label=$123 $] at (4,1) {};    	
	\node[vertex, label=right:$12 $] at ({4+sin(\w)},{cos(\w)}) {};   
	\node[vertex, label=right:$126 $] at ({4+sin( 2*\w)},{cos( 2*\w)}) {};   
	\node[vertex, label=right:$ 16$] at ({4+sin( 3*\w)},{cos( 3*\w)}) {};   
	\node[vertex, label=right:$ 156$] at ({4+sin( 4*\w)},{cos( 4*\w)}) {};   
	\node[vertex, label=right:$ 56$] at ({4+sin( 5*\w)},{cos( 5*\w)}) {};   
	\node[vertex, label=below:$ 456$] at ({4+sin( 6*\w)},{cos( 6*\w)}) {};   
	\node[vertex, label=left:$ 45$] at ({4+sin( 7*\w)},{cos( 7*\w)}) {};   
	\node[vertex, label=left:$ 345$] at ({4+sin( 8*\w)},{cos( 8*\w)}) {};   
	\node[vertex, label=left:$ 34$] at ({4+sin( 9*\w)},{cos( 9*\w)}) {};   
	\node[vertex, label=left:$ 234$] at ({4+sin( 10*\w)},{cos( 10*\w)}) {};   
	\node[vertex, label=left:$ 23$] at ({4+sin( 11*\w)},{cos( 11*\w)}) {};     
	\end{tikzpicture}
\end{center}
All triplewise-intersection conditions are easy to check (e.g., $125 \cap 15 \cap 145 = 15 \neq \emptyset $). Hence,  Theorem~\ref{thm:rigid-when-gph-is-cycle} implies that both $C15$ and $\CruzCode$ are non-closed-convex.
\end{example}

\begin{remark} \label{rmk:pinwheel}
When a code's codeword-containment graph is a cycle,
as in Theorem~\ref{thm:rigid-when-gph-is-cycle},
this is often revealed in a ``pinwheel'' in an open-convex realization of the code (if the code is open-convex).  Such a pinwheel is depicted in 
Figure~\ref{fig:pinwheel-infinte-family} in
the proof of Theorem~\ref{thm:infinite-family-non-closed},  and also
 in~\cite[Appendix~B]{goldrup2020classification} (for the code $C15$) and~\cite[Figure~2(a)]{cruz2019open} (for the code $\CruzCode$).
\end{remark}

% DEFINITION OF RIGID
%\begin{definition}\label{def:rigid-old}
%A code $\mathcal{D}$ on $n$ neurons is 
%%Let $\code$ be a neural code on $n$ neurons.  A subcode $\code' \subseteq \code$ is 
%{\em rigid} if in every closed-convex realization $\fullreal$ of %$\code’$, 
%$\mathcal{D}$, 
%the union of all receptive fields $\cup_{i \in [n]} U_i$ is convex.  {\color{blue} If possible, delete ``closed'' here.}
%\end{definition}

\subsection{A sufficient condition for rigidity} \label{sec:path}
In this subsection, we show that rigid structures arise whenever the codeword-containment graph of a code -- or a subcode arising in a specific way from a subset of neurons -- is a path (Lemma~\ref{lem:case-of-path-prop-with-all-neurons} and Proposition~\ref{prop:path-rigid}). 
The proof of 
Proposition~\ref{prop:path-rigid}, which appears at the end of this subsection,
relies on 
a minimum-distance argument (see Lemma~\ref{lem:hinge}) and is similar to (and inspired by) related proofs in~\cite{cruz2019open, goldrup2020classification}.

\begin{proposition}[Criterion for rigidity] \label{prop:path-rigid}
    Let $\code$ be a code on $n$ neurons.
    Let $\emptyset \neq \rig \subseteq [n]$. 
    Let $\Gamma$ be the codeword-containment graph of 
    the code 
    $\{ \tau \in \code \mid \tau \cap \rig \neq \emptyset \}$. 
    %{\color{red} Note: This is NOT the restricted code}.
%    $\code \smallsetminus \{ \emptyset\}$. 
    Assume $\Gamma$ is a path:
    \begin{center}
    \begin{tikzpicture} [scale=1]
	\tikzstyle{vertex}=[circle,fill=black, inner sep = 1pt]
	\node[vertex, label=$\sigma_1$] at (0,0) {};
	\node[vertex, label=$\sigma_2$] at (1,0) {};
	\node[vertex, label=$\sigma_3$] at (2,0) {};
	\node[ label=$\dots$] at (3,0) {};
	\node[vertex, label=$\sigma_q$] at (5,0) {};
	\node[label=$\Gamma $] at (6,-0.4) {};
    \draw (0,0) -- (5,0);    
\end{tikzpicture}
\end{center}
%    Then $(\rig, \cup )$ 
%    %$([n],\cup)$ 
%    is a rigid structure of~$\code$ 
%    if and only if
%    the intersection $\sigma_i \cap \sigma_{i+1} \cap \sigma_{i+2}$ is nonempty
%    for 
%    all $i=1,2,\dots,q-2$.
%%    there exists $i \in [n] $ such that $i \in c1, c2, c3$.
If, 
   for 
    all $i=1,2,\dots,q-2$, 
    the intersection $\sigma_i \cap \sigma_{i+1} \cap \sigma_{i+2}$ is nonempty, 
then $(\rig, \cup )$ 
    %$([n],\cup)$ 
    is a rigid structure of~$\code$.  
\end{proposition}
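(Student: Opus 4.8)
The plan is to show that for any closed-convex realization $\mathcal{U} = \{U_i\}_{i \in \sigma'}$ of a restricted code $\code|_{\sigma'}$ (with $\sigma' = [n]$ for concreteness, since $\code|_{[n]} = \code$ suffices by Remark~\ref{rem:sigma-vs-sigma'}), the union $\bigcup_{i \in \rig} U_i$ is convex. The core idea is the minimum-distance argument advertised in the statement: if the union were not convex, there would be points $a, b \in \bigcup_{i \in \rig} U_i$ and a point $p$ on the segment $[a,b]$ with $p \notin \bigcup_{i \in \rig} U_i$. I would then travel along the segment and track which atom of $\code$ each point lies in. Since $a$ lies in some $U_i$ with $i \in \rig$, the atom containing $a$ meets $\rig$ and is therefore a vertex $\sigma_{j_1}$ of the path $\Gamma$; likewise the atom containing $b$ is a vertex $\sigma_{j_2}$. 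By Lemma~\ref{lem:path-order-forcing-gph}, the sequence of atoms the segment visits traces a walk in the codeword-containment graph of $\code$. The subtlety is that this walk lives in the full graph of $\code$, not in $\Gamma$; but as soon as the walk leaves the vertex set of $\Gamma$ (which it must, since at $p$ we are in an atom $\atomMacro{\calU}{\nu}$ with $\nu \cap \rig = \emptyset$, hence $\nu \notin \Gamma$), it has to pass through a codeword adjacent to a $\Gamma$-vertex but not itself in $\Gamma$.

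The heart of the argument, the \emph{hinge lemma} (Lemma~\ref{lem:hinge}, which the excerpt forward-references but whose statement I must reconstruct), should say roughly: if $\sigma_i, \sigma_{i+1}, \sigma_{i+2}$ are three consecutive vertices of $\Gamma$ with $\sigma_i \cap \sigma_{i+1} \cap \sigma_{i+2} \neq \emptyset$, then in any closed-convex realization the union $\overline{\atomMacro{\calU}{\sigma_i}} \cup \overline{\atomMacro{\calU}{\sigma_{i+1}}} \cup \overline{\atomMacro{\calU}{\sigma_{i+2}}}$ — or more precisely the relevant portion of $\bigcup_{i\in\rig} U_i$ restricted near these atoms — is ``straight'': a geodesic cannot exit and re-enter it. I would prove rigidity by induction on the length $q$ of the path. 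The base case of a short path ($q \le 2$, or even $q=3$) is direct. For the inductive step, I would argue that $\bigcup_{i \in \rig} U_i$ is the nested union of pieces corresponding to initial segments of the path, and that the triplewise intersection hypothesis $\sigma_i \cap \sigma_{i+1} \cap \sigma_{i+2} \neq \emptyset$ for each $i$ provides exactly the ``overlap'' needed to glue consecutive convex pieces into one convex set — a point witnessing $\sigma_i \cap \sigma_{i+1} \cap \sigma_{i+2} \neq \emptyset$ lies in $U_k$ for some $k$ in all three of $\sigma_i, \sigma_{i+1}, \sigma_{i+2}$ (indeed some such $k \in \rig$, since these are $\Gamma$-vertices — wait, one must be careful: $k$ need only lie in the triple intersection, not in $\rig$, but at least one of the three codewords meets $\rig$, so after the hinge analysis one extracts the needed index), and such a common index lets one show any $U_j$ ($j \in \rig$) meeting one of these atoms connects convexly to $U_k$.

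The main obstacle I anticipate is the precise bookkeeping in the hinge/minimum-distance step: one wants to rule out a segment that starts in $\atomMacro{\calU}{\sigma_a}$, wanders through atoms that are \emph{not} all in $\rig$'s union, and ends in $\atomMacro{\calU}{\sigma_b}$. The natural approach is to pick the segment $[a,b]$ realizing the failure of convexity and then consider, among all points of $[a,b]$ lying outside $\bigcup_{i\in\rig} U_i$, the extreme ones; the points just inside on either side lie in some $U_j, U_{j'}$ with $j, j' \in \rig$, and the atoms there are $\Gamma$-vertices $\sigma_s$ and $\sigma_t$ with (by Lemma~\ref{lem:path-order-forcing-gph} applied to the sub-path actually traversed and the fact that leaving/re-entering forces the walk to backtrack through a specific vertex) some consecutive triple of $\Gamma$; the triplewise-intersection hypothesis then supplies a point $w \in U_k$ common to the relevant atoms' closures, and convexity of $U_k$ together with convexity of the $U_j$'s forces the whole detour to be redundant — the straight segment from $a$ to $b$ must already lie in the union. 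Getting the indices $s, t$ and the triple to line up correctly — in particular handling the case where the walk in $\code$'s graph uses codewords strictly containing or strictly contained in $\Gamma$-vertices but outside $\Gamma$ — is where the real care is needed; the rest is routine convexity. I would structure the write-up so that Lemma~\ref{lem:hinge} isolates this geometric fact cleanly, and then Proposition~\ref{prop:path-rigid} follows by a relatively short induction.
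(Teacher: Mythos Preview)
Your general direction — a minimum-distance ``hinge'' plus induction along the path — matches the paper, but the proposal has two real gaps.

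First, the hinge lemma (Lemma~\ref{lem:hinge}) is not what you describe. It says nothing about atoms or their closures; it is a statement about four closed sets $V_1,V_2,W_1,W_2$ with $V_1,V_2$ convex: if $V_1\cap V_2\neq\emptyset$, $V_1\subseteq V_2\cup W_1$, $V_2\subseteq V_1\cup W_2$, and $V_1\cap V_2\cap W_1\cap W_2=\emptyset$, then $V_1\cup V_2$ is convex. The minimum-distance argument lives entirely inside that lemma's proof. The induction (Lemma~\ref{lem:case-of-path-prop-with-all-neurons}) then applies it with $V_1=\bigcup_{i\le k}U_i$ (inductively convex), $V_2=U_{k+1}$, and carefully constructed $W_1,W_2$ built from neurons whose intervals sit on one side only; no walk-tracking through atoms is used.

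Second, and more seriously: you spotted the key obstruction — a witness $k\in\sigma_i\cap\sigma_{i+1}\cap\sigma_{i+2}$ need not lie in $\rig$ — and then waved it away (``after the hinge analysis one extracts the needed index''). You cannot. If $k\notin\rig$, then $U_k$ may cover atoms $\tau$ with $\tau\cap\rig=\emptyset$, so $U_k\not\subseteq\bigcup_{j\in\rig}U_j$ and $U_k$ is useless as the next piece in your induction. The paper handles this by a route you did not anticipate: it first proves only the special case $\rig=[n]$ (Lemma~\ref{lem:case-of-path-prop-with-all-neurons}), where every nonempty codeword lies on the path and each $U_i$ has a well-defined interval $[L_i,R_i]$ along it. For general $\rig$ it then adds a \emph{redundant neuron} $i_\omega$ (Definition~\ref{def:redundant}) for every codeword $\omega$ that meets $\rig$ but is not contained in it; restricting the enlarged code $\widetilde\code$ to $\rig\cup\mathcal I$ recovers exactly the same path as its codeword-containment graph, so Lemma~\ref{lem:case-of-path-prop-with-all-neurons} applies there, and Lemma~\ref{lem:redundant-rigid} transfers rigidity back to $\code$. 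Your walk-based contradiction sketch hits the same wall: when the segment passes from a $\Gamma$-vertex to some $\nu$ with $\nu\cap\rig=\emptyset$, the triplewise hypothesis (which only concerns $\Gamma$-vertices) tells you nothing about $\nu$, and you have no mechanism to force a contradiction.
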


\begin{example}\label{ex:goldphil-C6-rig}
We claim that for the code
$C6 = \{ {\bf 125,234,145,123},4,23,15, 12,\emptyset\}$
from Example~\ref{ex:goldrup-phillipson}, 
$(1235, \cup) $ is a rigid structure.
To see this, we apply Proposition~\ref{prop:path-rigid} to $C6|_{\{1,2,3,5\}}$
with $\rig=1235$: the graph $\Gamma$ is the path $15 - 125 - 12 - 123 - 23$, and the intersections 
 $15 \cap 125 \cap 12 = 1$, 
  $125 \cap 12 \cap 123 =12$, and 
   $12 \cap 123 \cap 23=2$ are all nonempty. 
\end{example}

In %the next subsection, 
Section~\ref{sec:examples-rigid}, 
we analyze more codes using Proposition~\ref{prop:path-rigid}.  

\begin{remark}[Triplewise-intersection condition] \label{rem:triplewise-intersection-condition}
The triplewise-intersection condition in 
Theorem~\ref{thm:rigid-when-gph-is-cycle} and
Proposition~\ref{prop:path-rigid} --
$( \sigma_i \cap \sigma_{i+1} \cap \sigma_{i+2}) \neq \emptyset$ --
can be viewed as a ``brace'' condition that forbids a closed-convex realization from ``bending'' at the atom of $\sigma_{i+1}$.  This condition 
can not be removed.  Indeed, consider the code $\{ {\bf 12}, 1, 2, \emptyset \}$. 
For $\rig=\{1,2\}$, the graph $\Gamma$ is a path, the intersection $1 \cap 12 \cap 2$ is empty, and the following closed-convex realization shows that $(\rig, \cup)$ is not a rigid structure:
    \begin{center}
    \begin{tikzpicture} [scale=0.55]
%	\tikzstyle{vertex}=[circle,fill=black, inner sep = 1pt]
	\node[label=$1$] at (0.5, 0.85) {};
	\node[label=$12$] at (1.5, 0.85) {};
	\node[label=$2$] at (1.5, -0.15) {};
    \draw (0,1) rectangle (2,2);
    \draw (1,0) rectangle (2,2);    
\end{tikzpicture}
\end{center}
\end{remark}

The rest of this subsection is dedicated to proving Proposition~\ref{prop:path-rigid}, beginning with the following lemma. 

\begin{lemma} \label{lem:hinge}
Let $V_1, V_2, W_1, W_2$ be closed sets in some $\mathbb{R}^d$.  If $V_1$ and $V_2$ are convex, and: \begin{itemize}
    \item[(i)]  $ V_1 \cap V_2 \neq \emptyset$,
    \item[(ii)] $ V_1 \subseteq (V_2 \cup W_1)$
%    \item[(iii)] 
        and $V_2 \subseteq (V_1 \cup W_2)$, and
    \item[(iii)] $V_1 \cap V_2 \cap W_1 \cap W_2  = \emptyset$~,
\end{itemize}
then $V_1 \cup V_2$ is convex.
\end{lemma}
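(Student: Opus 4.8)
{\bf Proof proposal for Lemma~\ref{lem:hinge}.}

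The plan is to show that $V_1 \cup V_2$ is convex by taking arbitrary points $a \in V_1$ and $b \in V_2$ and showing the segment $[a,b]$ lies in $V_1 \cup V_2$ (the cases where both points lie in the same $V_i$ are immediate from convexity of $V_i$). Since $V_1$ and $V_2$ are closed and convex and $V_1 \cap V_2 \neq \emptyset$ by (i), their intersection is a nonempty closed convex set. The key idea is to find the point of $[a,b]$ that is, in a suitable sense, ``closest'' to $V_1 \cap V_2$, and argue that the segment passes through $V_1 \cap V_2$ at that point --- or else derive a contradiction with (iii).

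More precisely, here is the structure I would follow. Parametrize $[a,b]$ as $p(t) = (1-t)a + tb$ for $t \in [0,1]$, with $p(0) = a \in V_1$ and $p(1) = b \in V_2$. Let $t^* = \sup\{ t \in [0,1] : p(t) \in V_1 \}$; since $V_1$ is closed, $p(t^*) \in V_1$. Symmetrically set $s^* = \inf\{ s : p(s) \in V_2 \}$, so $p(s^*) \in V_2$. By convexity of $V_1$, the subsegment $p([0,t^*]) \subseteq V_1$, and by convexity of $V_2$, $p([s^*,1]) \subseteq V_2$. If $s^* \le t^*$ then $[a,b] = p([0,t^*]) \cup p([s^*,1]) \subseteq V_1 \cup V_2$ and we are done. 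So assume for contradiction that $t^* < s^*$, meaning there is an open sub-interval $(t^*, s^*)$ of parameter values where $p(t) \notin V_1$ and $p(t) \notin V_2$. Now I invoke hypothesis (ii): for $t$ slightly larger than $t^*$, consider the structure near $p(t^*)$. The point $p(t^*) \in V_1$, and for $t > t^*$ the point $p(t) \notin V_1$; by (ii), $V_1 \subseteq V_2 \cup W_1$, so $p(t^*) \in V_2 \cup W_1$, and similarly approaching from the $V_2$ side, $p(s^*) \in V_1 \cup W_2$. I would argue that $p(t^*) \in W_1$ (if $p(t^*) \in V_2$, then because $V_2$ is convex and $p(s^*) \in V_2$ with $t^* < s^*$ we would get $p(t^*) \in V_2$ forces a contradiction with the definition of $s^*$ unless the whole middle segment is in $V_2$), and symmetrically $p(s^*) \in W_1$ as well via (ii) applied to points just right of $t^*$, which are in $V_2 \cup W_1$ but not in $V_2$, hence in $W_1$; pushing this, the closed set $W_1$ contains $p([t^*, s^*])$, and likewise $W_2$ contains $p([t^*,s^*])$. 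Meanwhile $V_1$ contains $p(t^*)$ and $V_2$ contains $p(t^*)$... — the precise bookkeeping needs care, and the cleanest route is: show $p(t^*) \in V_1 \cap V_2 \cap W_1 \cap W_2$ (or some point of the middle does), contradicting (iii).

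Let me restate the middle argument more carefully, as this is the main obstacle. Suppose $t^* < s^*$. For $t \in (t^*, s^*]$, $p(t) \notin V_1$, so by (ii) (the inclusion $V_2 \subseteq V_1 \cup W_2$) any such point that is in $V_2$ must be in... no --- better: for $t \in (t^*, s^*)$, $p(t) \notin V_1$ and $p(t) \notin V_2$. Take such a $t$. Since $p(t^*) \in V_1$ and $p(t) \notin V_1$, and $V_1 \subseteq V_2 \cup W_1$, we don't directly learn about $p(t)$. Instead use: $p(t^*) \in V_1 \subseteq V_2 \cup W_1$. If $p(t^*) \in V_2$, then together with $p(1) = b \in V_2$ and convexity, $p([t^*,1]) \subseteq V_2$, contradicting that $p(t) \notin V_2$ for $t \in (t^*, s^*)$. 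Hence $p(t^*) \in W_1$. Symmetrically $p(s^*) \in W_2$, and also $p(s^*) \in V_2 \subseteq V_1 \cup W_2$ forces (if $p(s^*) \in V_1$ then $p([0,s^*]) \subseteq V_1$ contradicting $t^* < s^*$) so indeed $p(s^*) \in W_2$; and running the same argument on the $W$ side, since $W_1$ is closed and contains $p(t^*)$... I would actually show $W_1 \supseteq p([t^*, s^*))$: for $t \in (t^*, s^*)$, $p(t)$ lies on the segment from $p(t^*) \in V_1$... hmm, this needs $V_1 \subseteq V_2 \cup W_1$ applied pointwise which only controls points of $V_1$. The truly clean finish: consider $q := p(t^*)$. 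We showed $q \in V_1 \cap W_1$. Now $q$ lies on segment $[a, p(s^*)]$ with endpoints in $V_1$ (as $a \in V_1$ and we'd need $p(s^*) \in V_1$ — false). Alternative: $q = p(t^*)$, and $p(s^*) \in V_2$; since $q \in V_1$, and we want $q \in V_2$: note $q$ lies between $p(0)=a$ and $p(s^*) \in V_2$; that doesn't put $q$ in $V_2$. I think the right move is a compactness/minimality argument: among all points of $[a,b] \cap V_1 \cap V_2$... but that set could be empty, which is exactly the degenerate case the lemma must handle.

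Given this, the honest assessment is that the crux is a careful case analysis establishing that under $t^* < s^*$ some point of the segment lies in all four sets, contradicting (iii); the convexity of $V_1, V_2$ is what lets us propagate membership along sub-segments, and hypothesis (ii) is what converts ``not in $V_1$'' near $p(t^*)$ into ``in $W_1$''. I expect the writeup to hinge on choosing the distinguished point as $p(t^*)$ (the last point of the segment in $V_1$), showing $p(t^*) \in W_1$ via (ii) and the definition of $s^*$, then showing $p(t^*) \in V_2$ and $p(t^*) \in W_2$ by a symmetric argument applied at the $V_2$-side together with a short argument that the ``gap'' $(t^*, s^*)$ can be taken to degenerate (e.g., by replacing $b$ with $p(s^*)$ and $a$ with $p(t^*)$ and iterating, or by a direct limiting argument), at which point $p(t^*) = p(s^*) \in V_1 \cap V_2 \cap W_1 \cap W_2$ contradicts (iii). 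This final reconciliation of the two one-sided arguments at a single point is the step I expect to require the most care.
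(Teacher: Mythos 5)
Your parametrization of the segment $[a,b]$, the definitions $t^* = \sup\{t : p(t) \in V_1\}$ and $s^* = \inf\{s : p(s) \in V_2\}$, and the deduction that $p(t^*) \in W_1$ and $p(s^*) \in W_2$ in the bad case $t^* < s^*$ are all correct. But the ``final reconciliation'' you flag as the delicate step is a genuine gap, and it cannot be closed within your framework. The difficulty is exactly the one you notice yourself: in the bad case, the segment $[a,b]$ misses $V_1 \cap V_2$ entirely (if some $r \in [a,b]$ lay in $V_1 \cap V_2$, then convexity of $V_1$ and $V_2$ would already give $[a,r] \subseteq V_1$ and $[r,b] \subseteq V_2$, and we would be done). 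Consequently $p(t^*) \notin V_2$ and $p(s^*) \notin V_1$ always hold in this case, so no point of $[a,b]$ can lie in $V_1 \cap V_2 \cap W_1 \cap W_2$, and the contradiction with hypothesis~(iii) is simply not available on the segment. Neither iterating on $[p(t^*), p(s^*)]$ nor a limiting argument will produce such a point, because the minimal sub-segment is already $[p(t^*), p(s^*)]$ and its endpoints never merge.

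The idea you are missing is to leave the segment and use hypothesis~(i) geometrically. Since $L := [a,b]$ is compact, $V_1 \cap V_2$ is closed, nonempty by~(i), and (as shown above) disjoint from $L$, there is a point $x \in V_1 \cap V_2$ realizing the positive minimum distance to $L$. The three points $a$, $b$, $x$ are not collinear, and on the segment $\overline{ax} \subseteq V_1$ every point other than $x$ is strictly closer to $L$ than $x$ (by convexity of the distance-to-$L$ function and $\operatorname{dist}(a,L)=0$), hence lies outside $V_2$ by minimality of $x$. Then the containment $V_1 \subseteq V_2 \cup W_1$ forces the closed set $W_1$ to cover all of $\overline{ax}$, so $x \in W_1$; by the symmetric argument on $\overline{bx}$, also $x \in W_2$. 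Thus $x \in V_1 \cap V_2 \cap W_1 \cap W_2$, contradicting~(iii). In short, the contradiction point lives in $V_1 \cap V_2$ away from the segment, not at $p(t^*)$ or $p(s^*)$; that is the piece your proposal lacks.
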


\begin{proof}
Let $p\in V_1$ and $q\in V_2$, and let $L = \overline{pq}$ be the line segment between $p$ and $q$. Assume for a contradiction that $L\not\subseteq V_1 \cup V_2$ (and in particular $p \notin V_2$ and $q \notin V_1$).

First, we claim that $L \cap V_1 \cap V_2 = \emptyset$. 
To see this, assume that there exists $r \in L \cap V_1 \cap V_2$. Then, by convexity of $V_1$ and $V_2$, we have $\overline{pr}\subseteq V_1$ and $\overline{qr}\subseteq V_2$.
%by the fact that $p,r\in V_1$, $q,r\in V_2$ and the convexity of the $U_i$'s. 
This implies that $L = \overline{pr} \cup \overline{qr}$ is contained in $V_1 \cup V_2$, which contradicts our hypothesis.  So, our claim is true.
%, we have that $L\subseteq V_1 \cup V_2$.

The line segment $L$ is compact. Also, $V_1 \cap V_2$ is closed and, by $(i)$, nonempty. Hence, there exists $x\in( V_1 \cap V_2)$ 
that realizes the (positive) distance between $L$ and $V_1 \cap V_2$.  That is, 
${\rm dist} (x,L) \leq {\rm dist} (y,L)$ for all $ y\in (V_1 \cap V_2)$.  

Define $L_1 = \overline{px}$ and $L_2= \overline{qx}$. 
We claim that the three points $p,q,x$ are not collinear. Indeed, none of the points is on the line segment defined by the other two: (1) $x\notin L$ because ${\rm dist} (x,L)>0$, 
(2) $p\not\in L_2$ because $p \notin V_2 \supseteq L_2$, and, symmetrically, (3) $q\not\in L_1$.  Hence, $p,q,x$ define %$L_1$, $L_2$, and $L$ 
a triangle, which we depict here together with $V_1$ and $V_2$: 

\begin{center}
    \begin{tikzpicture}[scale=1.5]
	\tikzstyle{vertex}=[circle, fill=black, inner sep = 2pt];
	\draw[blue,  fill=blue, fill opacity=0.2] 
	(0,0) -- (0.5,0) -- (1.75,-1.25) -- (1.25,-1.25) -- (0,0)  ;
    \draw[black,  fill=black, fill opacity=0.2] 
	(0,0) 
	-- (0.5,0) 
	-- (-0.75,-1.25) -- (-1.25,-1.25) -- (0,0) ;
	% LABEL THE TWO SETS
	\node[label=${\color{blue} V_2 }$] at (1, -0.5) {};
%	\node[label=${\color{red} U_{m}}$] at (0.2,-1.5) {};
	\node[label=$V_1$] at (-0.6, -0.5) {};
    % TRIANGLE VERTICES
    \node[vertex, label=below:$x$] at (0.25, -0.25) {};
    \node[vertex, label=below left:$p$] at (-0.5, -0.7) {};
    \node[vertex, label=below right:$q$] at (1, -0.7) {};
    % TRIANGLE
    \draw [thick] (0.25, -0.25) -- (-0.5, -0.7) -- (1, -0.7) -- (0.25, -0.25);
	\node[label=$L$] at (0.2,-1.2) {};
    \end{tikzpicture}
\end{center}

Next, we claim that $L_1 \cap V_2 = \{x\}$.  Indeed, any point $x \neq y \in L_1 \subseteq V_1$ satisfies ${\rm dist} (y,L) < {\rm dist} (x,L)$ , and so (by construction of $x$) such a point $y$ is not in $V_2$.  

The line segment $L_1$ is covered by $V_2$ and $W_1$, because $L_1 \subseteq V_1$ and by $(ii)$.  However, by the above claim, $V_2$ covers only an endpoint of $L_1$; so, (because $W_1$ is closed) $W_1$ must cover all of $L_1$.  We conclude that $x\in W_1$ and so (by symmetry) $x \in W_2$ also.  Thus,
%Recall that $L_1\subseteq V_1$, and that $x\in V_2$. If $L_1$ contains a point $x_1\neq x$ with $x_1 \in V_1 \cap V_2$ as well, we have a contradiction: $x_1$ is contained in the triangle $pqx$ {\color{orange} here the triangle includes both the edges and the interior}, where $x$ is a vertex and $L$ is the edge opposite to $x$. This means that $dist(y,L)< dist(x,L)$ for all $y\in pqx$, implying $dist(x_1,L)< dist(x,L)$, a contradiction since $x_1 \in V_1 \cap V_2$ and $x$ satisfies (\ref{cond:x}). Therefore, $L_1 \cap V_2 = \{x\}$. Since $V_1 \subseteq V_2 \cup W_1$, we have that $V_2$ and $W_1$ also cover $L_1$. Then furthermore, since $V_2$ and $W_1$ are both closed, $V_2 \cap L_1$ consists only of the singleton set $\{x\}$, and $L_1$ is connected, we have that in fact $L_2 \subseteq W_1$. In particular, $x\in W_1$. Using same reasoning as above, we have that $x\in W_2$ as well. Thus, 
$x \in V_1 \cap V_2 \cap W_1 \cap W_2 $, 
which contradicts $(iii)$.
\end{proof}

\begin{remark}\label{rem:lemma-false-if-open-sets}
Lemma~\ref{lem:hinge} becomes false if ``closed sets'' is replaced by ``open sets''.
\end{remark}

The following lemma pertains to when the codeword-containment graph is a path.
\begin{lemma} 
    \label{lem:path-interval-condition-and-alternating-containments}
    Let $\code$ be a code on $n$ neurons.
    Let $\Gamma$ be the codeword-containment graph of the code
    $\code \smallsetminus \{ \emptyset\}$. 
    Assume that $\Gamma$ is a path:
    \begin{center}
    \begin{tikzpicture} [scale=1]
	\tikzstyle{vertex}=[circle,fill=black, inner sep = 1pt]
	\node[vertex, label=$\sigma_1$] at (0,0) {};
	\node[vertex, label=$\sigma_2$] at (1,0) {};
	\node[vertex, label=$\sigma_3$] at (2,0) {};
	\node[ label=$\dots$] at (3,0) {};
	\node[vertex, label=$\sigma_q$] at (5,0) {};
	\node[label=$\Gamma $] at (6,-0.4) {};
    \draw (0,0) -- (5,0);    
\end{tikzpicture}
\end{center}
Then the following conditions hold:
\begin{enumerate}
    \item 
 {\bf Interval Condition.}  If $\code$ is closed-convex, then 
 for all $i \in [n]$, either:
 \begin{itemize}
     \item no codeword of $\code$ contains $i$, or  %(that is, $U_i = \emptyset$), or 
     \item there exists a ``left endpoint'' $L_i$ and a ``right endpoint'' $R_i$ with $1 \leq L_i \leq R_i \leq q$ such that 
     the codewords between the two endpoints are precisely those containing $i$ (that is,
     $ L_i \leq \ell \leq R_i \Longleftrightarrow i \in \sigma_{\ell}$).
 \end{itemize}
    \item {\bf Alternating-Containment Condition.} The containment relations between subsequent codewords alternate, so that one of the following holds:
\begin{align} \label{eq:containments-alternate}
 \sigma_1 \subsetneq \boldsymbol{\sigma_2 } \supsetneq \sigma_3 \subsetneq  \boldsymbol{\sigma_4 } \supsetneq  \dots  
            \quad \textrm{or} \quad
  \boldsymbol{\sigma_1 } \supsetneq \sigma_2 \subsetneq  \boldsymbol{\sigma_3 } \supsetneq {\sigma_4} \subsetneq \dots  
\end{align}
Here, the maximal codewords of $\code$ are indicated in bold.  
\end{enumerate}

\end{lemma}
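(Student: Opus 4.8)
The plan is to prove the two parts separately, both by exploiting that $\Gamma$ being a path severely constrains how neurons can "turn on and off" along the sequence $\sigma_1,\dots,\sigma_q$.

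\textbf{Alternating-Containment Condition.} This is purely combinatorial and does not use convexity. First I would recall that an edge $(\sigma_j,\sigma_{j+1})$ of $\Gamma$ means either $\sigma_j\subsetneq\sigma_{j+1}$ or $\sigma_j\supsetneq\sigma_{j+1}$; the goal is to rule out two consecutive strict containments pointing the same way, i.e.\ $\sigma_{j-1}\subsetneq\sigma_j\subsetneq\sigma_{j+1}$ or $\sigma_{j-1}\supsetneq\sigma_j\supsetneq\sigma_{j+1}$. In the first case, transitivity gives $\sigma_{j-1}\subsetneq\sigma_{j+1}$, so $(\sigma_{j-1},\sigma_{j+1})$ would be an edge of $\Gamma$; together with the edges $(\sigma_{j-1},\sigma_j)$ and $(\sigma_j,\sigma_{j+1})$ this produces a $3$-cycle in $\Gamma$, contradicting that $\Gamma$ is a path. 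The decreasing case is symmetric. Hence the containments must alternate in direction, which is exactly~\eqref{eq:containments-alternate}; the codeword at each "local maximum" of the alternation is maximal among $\{\sigma_1,\dots,\sigma_q\}$ with respect to inclusion (its two neighbors are properly contained in it), and one checks it is in fact maximal in $\code$ since any codeword properly containing it would be an extra neighbor in $\Gamma$. I would also note the two cases of~\eqref{eq:containments-alternate} correspond to whether $\sigma_1$ is a "valley" or a "peak", determined by the direction of the single edge $(\sigma_1,\sigma_2)$.

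\textbf{Interval Condition.} Here I would use the hypothesis that $\code$ is closed-convex: fix a closed-convex realization $\mathcal{U}=\{U_i\}_{i\in[n]}$ in some $\mathbb{R}^d$. For a neuron $i$ that lies in at least one codeword, consider the set $S_i:=\{\ell\in\{1,\dots,q\}\mid i\in\sigma_\ell\}$ of positions along the path whose codeword contains $i$; I want to show $S_i$ is a contiguous block $\{L_i,L_i+1,\dots,R_i\}$. Suppose not: then there are positions $a<b<c$ with $a,c\in S_i$ but $b\notin S_i$. The idea is to pick a point $p$ in the atom $\atomMacro{\mathcal{U}}{\sigma_a}$ and a point $r$ in the atom $\atomMacro{\mathcal{U}}{\sigma_c}$; both lie in the convex set $U_i$, so the segment $\overline{pr}\subseteq U_i$. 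By Lemma~\ref{lem:path-order-forcing-gph}, the sequence of atoms met by $\overline{pr}$ traces a walk in $\Gamma$ from $\sigma_a$ to $\sigma_c$; since $\Gamma$ is a path, that walk must pass through $\sigma_b$, so some point of the segment lies in $\atomMacro{\mathcal{U}}{\sigma_b}$ and hence is \emph{not} in $U_i$ (because $i\notin\sigma_b$). This contradicts $\overline{pr}\subseteq U_i$, so $S_i$ is contiguous, giving $L_i=\min S_i$ and $R_i=\max S_i$ with the stated characterization $L_i\le\ell\le R_i\iff i\in\sigma_\ell$.

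The main obstacle is making the "walk in $\Gamma$ must pass through $\sigma_b$" step airtight: I need to know the atoms $\atomMacro{\mathcal{U}}{\sigma_\ell}$ are the \emph{only} atoms available (so every atom the segment meets corresponds to some $\sigma_\ell$, which holds because $\code\smallsetminus\{\emptyset\}=\{\sigma_1,\dots,\sigma_q\}$ and the empty codeword's atom, if nonempty, is disjoint from every $U_i$ hence cannot lie between two points of $U_i$), and that consecutive distinct atoms along the segment give edges of $\Gamma$ (Lemma~\ref{lem:path-order-forcing-gph}), so the list of distinct atoms encountered is a walk in the path $\Gamma$ from the vertex $\sigma_a$ to the vertex $\sigma_c$; in a path graph every such walk contains every vertex strictly between its endpoints, and $\sigma_b$ is such a vertex since $a<b<c$ along the path. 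Everything else is bookkeeping. I would present the Interval Condition first (it is used in the proof of Proposition~\ref{prop:path-rigid}) or the Alternating Condition first since it is self-contained; either order works.
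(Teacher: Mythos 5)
Your proof is correct and takes essentially the same approach as the paper's: the Alternating-Containment Condition is the transitivity-forbids-chords argument, and the Interval Condition applies Lemma~\ref{lem:path-order-forcing-gph} to conclude that a segment joining two points of $U_i$ would have to cross an atom whose codeword omits $i$. You spell out two details that the published proof leaves implicit (that the atom of $\emptyset$ is never entered because the segment lies inside $U_i$, and that the ``peaks'' of the alternation are in fact maximal codewords of $\code$), but the underlying route is the same.
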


\begin{proof}
Assume that the Interval Condition does not hold.  
Then, 
there exists a closed-convex realization $\fullreal$ of $\code$, 
and, additionally,
for some neuron $i \in [n]$,
there exist three codewords $\sigma_{\ell_1}$,
$\sigma_{\ell_2}$, 
$\sigma_{\ell_3}$ with $\ell_1 < \ell_2 < \ell_3$ such that $i \in (\sigma_{\ell_1} \cap \sigma_{\ell_3})$ but $i \notin \sigma_{\ell_2}$.  Then, by Lemma~\ref{lem:path-order-forcing-gph} and the construction of $\Gamma$, 
every line segment from the atom of $\sigma_{\ell_1}$ to the atom of $\sigma_{\ell_3}$ is not completely contained in $U_i$. Hence, $U_i$ is non-convex, which is a contradiction.
%implies that $\code$ has no closed-convex realization {\color{red} do we need to say more here?}, and so Proposition~\ref{prop:path-rigid} is vacuously true.
%
%Accordingly, assume the Interval Condition for the rest of the proof.

Next, we verify the Alternating-Containment Condition. 
If 
the containments 
$\sigma_{r-1} \subseteq \sigma_r \subseteq \sigma_{r+1}$
or 
$\sigma_{r-1} \supseteq \sigma_r \supseteq \sigma_{r+1}$
hold, for some $r$, then $(\sigma_{r-1}, \sigma_{r+1})$ is an edge of $\Gamma$, which 
contradicts the fact that $\Gamma$ is a path.
\end{proof}

We use Lemmas~\ref{lem:hinge} and~\ref{lem:path-interval-condition-and-alternating-containments} to prove the 
next result, 

which is the special case of Proposition~\ref{prop:path-rigid} when the rigid structure $\rig$ is the set of all $n$ neurons.  In turn, Lemma~\ref{lem:case-of-path-prop-with-all-neurons} will be used to prove Proposition~\ref{prop:path-rigid}.

\begin{lemma} %[Criterion for rigidity with $\rig=\{1,2,\dots,n\}$] \label{lem:path-rigid}
    \label{lem:case-of-path-prop-with-all-neurons}
    Let $\code$ be a code on $n$ neurons.
    Let $\Gamma$ be the codeword-containment graph of the code
    $\code \smallsetminus \{ \emptyset\}$. 
    Assume that $\Gamma$ is a path:
    \begin{center}
    \begin{tikzpicture} [scale=1]
	\tikzstyle{vertex}=[circle,fill=black, inner sep = 1pt]
	\node[vertex, label=$\sigma_1$] at (0,0) {};
	\node[vertex, label=$\sigma_2$] at (1,0) {};
	\node[vertex, label=$\sigma_3$] at (2,0) {};
	\node[ label=$\dots$] at (3,0) {};
	\node[vertex, label=$\sigma_q$] at (5,0) {};
	\node[label=$\Gamma $] at (6,-0.4) {};
    \draw (0,0) -- (5,0);    
\end{tikzpicture}
\end{center}
If, 
   for 
    all $i=1,2,\dots,q-2$, 
    the intersection $\sigma_i \cap \sigma_{i+1} \cap \sigma_{i+2}$ is nonempty, 
then $([n],\cup)$ 
    is a rigid structure of~$\code$.  
\end{lemma}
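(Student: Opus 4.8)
The plan is to unwind the definition of a rigid structure and then show that, in any closed-convex realization of $\code$, the relevant union is convex, using Lemma~\ref{lem:path-interval-condition-and-alternating-containments} for the combinatorial scaffolding and Lemma~\ref{lem:hinge} as the geometric engine.

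First I would reduce. By Definition~\ref{def:rigid}(2) with $\sigma = \sigma' = [n]$, the pair $([n], \cup)$ is a rigid structure of $\code$ precisely when $\bigcup_{i \in [n]} U_i$ is convex in every closed-convex realization $\mathcal{U} = \{U_i\}_{i \in [n]}$ of $\code$. So fix such a realization and set $V_k := U_{\sigma_k} = \bigcap_{i \in \sigma_k} U_i$ for $k = 1, \dots, q$. Each $V_k$ is closed and convex (a finite intersection of closed convex sets) and nonempty, since it contains the atom $\atomMacro{\calU}{\sigma_k}$, which is nonempty as $\sigma_k \in \code$. Moreover $\bigcup_{i \in [n]} U_i = \bigcup_{k=1}^q V_k$: each $V_k$ lies inside $U_i$ for any $i \in \sigma_k$ (and $\sigma_k \neq \emptyset$), while any $x \in \bigcup_i U_i$ lies in the atom $\atomMacro{\calU}{\tau}$ of the nonempty codeword $\tau = \{j : x \in U_j\}$, hence in $V_k$ for the index $k$ with $\tau = \sigma_k$. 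Thus it suffices to show that $\bigcup_{k=1}^q V_k$ is convex.

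Next I would extract structure from Lemma~\ref{lem:path-interval-condition-and-alternating-containments}, which applies because the codeword-containment graph of $\code \smallsetminus \{\emptyset\}$ is the path $\sigma_1 - \dots - \sigma_q$, and $\code$ is closed-convex (it is realized by $\mathcal{U}$). The Alternating-Containment Condition makes consecutive codewords $\sigma_k, \sigma_{k+1}$ nested, hence $V_k$ and $V_{k+1}$ nested as well (the larger codeword giving the smaller intersection), with the nestings alternating along the chain. The Interval Condition says that for each neuron $i$ the codewords containing $i$ form a contiguous block $\sigma_{L_i}, \dots, \sigma_{R_i}$ of the path; combined with the decomposition $U_i = \bigcup \{ \atomMacro{\calU}{\sigma_k} : i \in \sigma_k \}$ and the containments $\atomMacro{\calU}{\sigma_k} \subseteq V_k \subseteq U_i$, this gives $U_i = \bigcup_{k = L_i}^{R_i} V_k$. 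So every ``contiguous block'' $\bigcup_{k=a}^{b} V_k$ that happens to equal some $U_i$ is convex, and any two consecutive indices $k, k+1$ are spanned by such a block, since $\sigma_k \cap \sigma_{k+1}$ equals the smaller of the two codewords and so is nonempty.

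Finally I would prove $\bigcup_{k=1}^q V_k$ convex by induction on $q$, the case $q \leq 2$ being immediate (for $q = 2$ the union is the larger of the nested sets $V_1, V_2$). For the inductive step, peel off one end: after possibly reversing the indexing, either $V_q \subseteq V_{q-1}$, so that $\bigcup_{k \leq q} V_k = \bigcup_{k \leq q-1} V_k$ is convex by the inductive hypothesis, or else $V_{q-1} \subseteq V_q$, and I would apply Lemma~\ref{lem:hinge} with $V_1 := \bigcup_{k=1}^{q-1} V_k$ (convex by the inductive hypothesis), $V_2 := V_q$, and $W_1, W_2$ chosen as appropriate contiguous sub-unions of the $V_k$'s so that hypothesis~(ii) of Lemma~\ref{lem:hinge} holds. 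Hypothesis~(i) holds because $V_{q-1} \cap V_q = V_{q-1} \neq \emptyset$; hypothesis~(iii) -- that no point lies in all four of $V_1, V_2, W_1, W_2$ -- is exactly where the triplewise-intersection hypothesis $\sigma_i \cap \sigma_{i+1} \cap \sigma_{i+2} \neq \emptyset$ enters, acting as the ``brace'' of Remark~\ref{rem:triplewise-intersection-condition} that keeps a point of the overlap $V_1 \cap V_2$ from escaping into the outer remainder sets, and it is also here that the interval condition and the convexity of the bridging sets $U_i$ are needed. I expect this last step -- choosing $W_1, W_2$ and verifying hypothesis~(iii), while tracking which $V_k$'s can occur in $V_1 \cap V_2$ and confirming the inductive hypothesis genuinely applies to the peeled chain -- to be the main obstacle. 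An alternative, closer in spirit to~\cite{cruz2019open, goldrup2020classification}, is a direct argument by contradiction: take a segment $L$ joining two points of $\bigcup_k V_k$ that leaves the union, locate the two ``lips'' of the first gap of $L$, show their codewords must be disjoint since each $U_i$ is convex, and then contradict the triplewise condition by the minimum-distance argument used to prove Lemma~\ref{lem:hinge}.
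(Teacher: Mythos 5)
Your outline correctly identifies the two supporting lemmas (Lemma~\ref{lem:path-interval-condition-and-alternating-containments} for the combinatorics, Lemma~\ref{lem:hinge} as the geometric engine) and makes a nice observation -- that $\bigcup_{i\in[n]} U_i = \bigcup_{k=1}^q V_k$ where $V_k := U_{\sigma_k}$, and consecutive $V_k$'s are nested in an alternating pattern. This codeword-indexed decomposition is genuinely different from the paper's, which instead carefully reorders a subfamily $U_1,\dots,U_m$ of the receptive fields themselves (using the triplewise condition to select each $U_{k+1}$ so that it straddles the right end of $U_k$) and then inducts on that subfamily. However, the core of your argument -- the inductive step applying Lemma~\ref{lem:hinge} -- is not carried out, and the one concrete suggestion you make for it is wrong.

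Specifically, you propose taking $W_1,W_2$ to be ``contiguous sub-unions of the $V_k$'s.'' This cannot work for $W_2$. In the nontrivial case $\sigma_q \subsetneq \sigma_{q-1}$, your $V_2' = V_q$ covers exactly the atoms of $\sigma_{q-1}$ and $\sigma_q$, and $V_2' \smallsetminus V_1'$ contains the atom of $\sigma_q$; but no $V_k$ with $k<q$ contains this atom, and $V_q$ itself contains the atom of $\sigma_{q-1}$ (which lies in $V_1'\cap V_2'$), so including $V_q$ in $W_2$ forces you into condition~(iii) failing as soon as $W_1$ touches the atom of $\sigma_{q-1}$. The paper circumvents this by building $W_1,W_2$ from individual \emph{neuron} sets $U_{i_\ell}$ and $U_{j_r}$, chosen -- via the Alternating-Containment Condition -- so that each has a prescribed one-sided endpoint; the interval structure of the $U_i$'s (not of the $V_k$'s) is what makes $W_1\cap W_2 = \emptyset$. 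Relatedly, you correctly flag that the triplewise-intersection hypothesis must enter somewhere, but your outline never says where. In the paper it is used during the relabeling step to guarantee the existence of the overlapping $U_{k+1}$; your $V_k$-decomposition bypasses that relabeling, so you would need a substitute use of the triplewise condition (most likely in proving that $W_1$, built from $U_i$'s with $R_i \le q-2$, actually covers all the atoms $\sigma_1,\dots,\sigma_{q-2}$). Until that step is supplied -- or the alternative minimum-distance argument you sketch at the end is fleshed out -- the proof has a genuine gap at exactly the place where the real work lies.
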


\begin{proof} 
Let $\fullreal$ be a closed-convex realization of $\code$.  We must show $\cup_{i \in [n] } U_i$ is convex. 

By Lemma~\ref{lem:path-interval-condition-and-alternating-containments}, 
for all $i \in [n]$, either $U_i$ is empty or there exist left and right endpoints, $L_i$ and $R_i$, respectively, with $1 \leq L_i \leq R_i \leq q$, such that 
     $ L_i \leq \ell \leq R_i \Longleftrightarrow i \in \sigma_{\ell}$.
Lemma~\ref{lem:path-interval-condition-and-alternating-containments}
also implies that subsequent codewords alternate, as in~\eqref{eq:containments-alternate}.
 
%Our first step, informally, is to assert that
% each $U_i$, for $i \in [n]$, is an ``interval'' in the graph~$\Gamma$. 
% To make this idea precise, consider the following condition: [INTERVAL CONDITION WAS HERE]. 

When we have containments $ \sigma_{r-1} \subsetneq \boldsymbol{\sigma_r } \supsetneq \sigma_{r+1} $,
%in~\eqref{eq:containments-alternate}, 
the index $r$ of the maximal codeword $\sigma_r$ is a right endpoint of all $U_i$ with $i \in (\sigma_r \smallsetminus \sigma_{r+1} )$ and is a 
left
endpoint of all $U_j$ with $j \in (\sigma_r \smallsetminus \sigma_{r-1} )$.  Furthermore, 
indices $\ell$ of non-maximal codewords $\sigma_{\ell}$ are not endpoints, unless $\ell = 1$ or $\ell= q$.  We will use these facts below.
  
%---------
% q = small
%---------
If $q=1$, then $U_i=U_j$ for all $i,j \in \sigma_1 $
and, additionally, $U_k = \emptyset$ for all $k \in [n] \smallsetminus \sigma_1$; hence the union $\cup_{i \in [n]} U_i$ is convex.  
 If $q=2$, then we have $\sigma_1 \subsetneq \boldsymbol{\sigma_2  }$ (or the reverse containment, which is symmetric), and so $\cup_{i \in [n]} U_i = U_j$ for any $j \in \sigma_1$; hence, this union is convex.
%for $j\in \rig \cap (\sigma_1 \smallsetminus \sigma_2)$ define $U_j$ to be closed interval $[0,2]$ in $\mathbb R$ and for $j\in \rig \cap \sigma_2$ define $U_j=[0,1]$ and for all other $j\in \rig$ define $U_j= \emptyset$ -- easy to check that this is a closed-convex realization }  
So, assume for the rest of the proof that $q \geq 3.$

%---------
% Relabel the U_i's
%---------
We relabel the $U_i$'s as follows:
%\sout{ so that $\rig=\{1,2,\dots, p\}$ and moreover}: 
\begin{itemize}
    \item Pick $U_1$ to be a ``leftmost'' receptive field (i.e., $L_1=1$) of maximal width (i.e., $R_1$ is maximal among all $U_i$ with $L_i=1$).  Notice that $R_1>1$ holds: the triplewise-intersection condition (and the $q\geq 3 $ assumption) imply that there is some $i \in (\sigma_1  \cap \sigma_2 \cap \sigma_3)$, and so $L_i=1$ and $R_i \geq 3$ for this $i$.  
    \begin{itemize}
        \item     If $R_1 = q$, then relabel all remaining $U_i$'s arbitrarily.  
        \item   Otherwise, continue to the next step.
    \end{itemize}
    \item Pick $U_2$ 
    to be some $U_i$ such that $i \in (\sigma_{R_1-1} \cap \sigma_{R_1} \cap \sigma_{R_1+1})$ 
    (the triplewise-intersection condition implies that such an $i$ exists, and 
    the inequalities $1 < R_1 < q$ guarantee that $1 \leq R_1-1$ and $R_1+1 \leq q$).  Thus, $L_2 < R_1 < R_2$.
    \begin{itemize}
        \item   If $R_2 = q$, then relabel all remaining $U_i$'s arbitrarily.  
        \item   Otherwise, continue to the next step.
    \end{itemize}
    \item Successively pick $U_3, U_4, \dots, U_m$ (for some $m\leq n$) in the same way (that is, $U_k$, for $k \geq 3$, is chosen to be some $U_i$ such that $i \in (\sigma_{R_{i-1}-1} \cap \sigma_{R_{i-1}} \cap \sigma_{R_{i-1}+1})$), so that $R_2 < R_3 < \dots < R_m = q$ and $L_3< R_2$, 
    $L_4< R_3, \dots$, 
    $L_m< R_{m-1}$.  Relabel the remaining $U_i$'s arbitrarily.
\end{itemize}

By construction, $\cup_{i=1}^m U_i = \cup_{i=1}^n U_i$.
%$U_1 \cup U_2 \cup \dots \cup U_m = U_1 \cup U_2 \cup \dots \cup U_n $
So, it suffices to prove (by induction) that the union $\cup_{i=1}^k U_i$ is convex for all $k=1,2,\dots, m$.

%---------
% Base case
%---------
The base case ($k=1$) is true by assumption.  
%---------
% Induction step
%---------
For induction, assume that $\cup_{i=1}^k U_i$ is convex for some $1 \leq k \leq m-1$.  We will show that $\cup_{i=1}^{k+1} U_i$ is convex by applying Lemma~\ref{lem:hinge}.  To this end, let 
$V_1= \cup_{i=1}^k U_i$ and 
$V_2=U_{k+1}$.  Both $V_1$ and $V_2$ are closed and convex by hypothesis, and we depict them  schematically here:

\begin{center}
    \begin{tikzpicture}[scale=1]
    \fill[black,  fill=black, fill opacity=0.2] (0,0) rectangle (7,1);
    \fill[blue,  fill=blue, fill opacity=0.2] (3,0) rectangle (10,1);
	% LABEL THE TWO SETS
	\node[label={${\color{blue} V_2 =  U_{k+1} }$}] at (7, 1) {};
	\node[label={$V_1 = \cup_{i=1}^k U_i $}] at (3,1) {};
	\node[label={$L_1=1$}] at (0.8, 0) {};
	\node[label={$L_{k+1}$}] at (3.7, 0) {};
	\node[label={$R_{k}$}] at (6.3, 0) {};
	\node[label={$R_{k+1}$}] at (9.5, 0) {};
    \end{tikzpicture}
\end{center}

By construction, $L_1=1< L_{k+1} < R_k< R_{k+1}$, so $V_1 \nsubseteq V_2$ and $V_1 \nsupseteq V_2$.  The intersection $V_1 \cap V_2$ is nonempty: it consists of all atoms between the endpoints $L_{k+1}$ and $R_k$.  

%---------
% W_1
%---------
Next, we construct a closed set $W_1$ that covers all atoms to the left of $L_{k+1}$.  Let 
\[
    \mathcal{L} ~=~ \{ \ell \mid 1 \leq \ell \leq L_{k+1} ~\mathrm{and}~ \sigma_{\ell}  \textrm{ is a maximal codeword of } \code\}~.
\]
Let $\ell \in \mathcal{L}$. If $\ell=1$ (so, $\sigma_1$ is a maximal codeword), pick $i_{\ell} \in (\sigma_1 \smallsetminus \sigma_2)$; hence, $U_{i_\ell}$ contains the atom of $\sigma_1$. 
If $\ell \neq 1$, proceed as follows.  Pick $i_{\ell} \in  (\sigma_{\ell-1} \smallsetminus \sigma_{\ell + 1} ) $ (which exists because otherwise $\sigma_{\ell -1 } \subseteq \sigma_{\ell+1}$, 
which would contradict the fact that $\Gamma$ is a path).
Hence, $i_{\ell} \in  \sigma_{\ell-1}  \subseteq \sigma_{\ell}$ and so $U_{i_{\ell}}$ contains the atoms of $\sigma_{\ell-1}$ and $\sigma_{\ell}$.  
On the other hand, $i_{\ell} \notin \sigma_{\ell+1 }$ implies that the right endpoint of 
$U_{i_{\ell}}$  is $R_{i_{\ell}} = \ell \leq L_{k+1}$.

It follows that $W_1:= \cup_{\ell \in \mathcal{L}} U_{i_{\ell}}$ is a closed set such that $V_1 \subseteq ( V_2 \cup W_1)$.

%---------
% W_2
%---------
We define $W_2$ similarly.  Let 
\[
    \mathcal{R} ~=~ \{ r \mid R_k \leq r \leq R_{k+1} ~\mathrm{and}~ \sigma_{r}  \textrm{ is a maximal codeword of } \code\}~.
\]
Let $r \in \mathcal{R}$. 
If $r=q$, pick $j_{r} \in (\sigma_q \smallsetminus \sigma_{q-1})$; hence, $U_{j_r}$ contains the atom of $\sigma_r$. 
If $r< q$, proceed as follows.  
Pick $j_{r} \in  (\sigma_{r+1} \smallsetminus \sigma_{r - 1} ) $ (which exists because otherwise $\sigma_{r+1 } \subseteq \sigma_{r-1}$, which is a contradiction).
Hence, $j_{r} \in  \sigma_{r+1}  \subseteq \sigma_{r}$ and so $U_{j_{r}}$ contains the atoms of $\sigma_{r}$ and $\sigma_{r+1}$.  
On the other hand, $j_{r} \notin \sigma_{r-1 }$ implies that $L_{j_{r}} = r \geq R_{k+1}$. 
Hence, $W_2:= \cup_{r \in \mathcal{R}} U_{j_{r}}$ is a closed set such that $V_2 \subseteq ( V_1 \cup W_2)$.

Now $V_1,V_2,W_1,W_2$ satisfy hypotheses $(i)$ and $(ii)$ of 
Lemma~\ref{lem:hinge}.  It remains only to check~$(iii)$. 
We saw above that the right endpoint of every $U_{i_{\ell}}$ is at most $L_{k+1}$, and the left endpoint of every $U_{j_r}$ is at least $R_k$.  Also, $L_{k+1} < R_k$. Thus, 
$(W_1 \cap W_2)
=
(\cup_{\ell \in \mathcal{L}} U_{i_{\ell} } ) 
\cap 
(\cup_{r \in \mathcal{L}} U_{j_{r}} )
= \emptyset
$,  and so $(iii)$ holds. 
Thus, %by Lemma~\ref{lem:hinge}, 
the union $\cup_{i=1}^{k+1} U_i$ is convex.
\end{proof}

One might hope to now prove Proposition~\ref{prop:path-rigid} by applying Lemma~\ref{lem:case-of-path-prop-with-all-neurons} to a restricted code $\code|_{\rig}$.  However, this does not always work.  For instance, for the code $\CruzCode$ with $\rig=1456$, the restricted code $\CruzCode |_{\rig}$ generates a codeword-containment graph $\Gamma$ that is not a path: 
the codewords $12, 126, 16, 156$, which form a path in $\CruzCode$, become $ 1, 16, 16, 156$ in $\CruzCode|_{\rig}$, and so a vertex is lost and a non-path edge is added in $\Gamma$.  To circumvent this problem, we introduce redundant neurons $i_{12}$ and $ i_{126}$ that ``remember'' the original atoms so that, with the redundant neurons, the restricted codewords -- 
$1 i_{12},  16 i_{126} i_{12}, 16 , 156$ -- again form a path.

The following definition explains how to add redundant neurons 
(this is in contrast to the focus, in~\cite{jeffs2020morphisms}, on removing redundant neurons).

\begin{definition} \label{def:redundant}
Let $\code$ be a code on $n$ neurons, and let $\sigma \subseteq [n]$.  
The {\em code obtained by adding a redundant neuron $i_0$ for $\sigma$} is the code on $n+1$ neurons obtained from $\code$ by replacing each codeword $\tau \in \code$ that contains $\sigma$ with $\tau \cup\{i_0\}$.
\end{definition}

\begin{lemma}[Adding redundant neurons preserves convexity and codeword-containment] \label{lem:redundant}
Let $\code$ be a code on $n$ neurons, and let $\sigma \subseteq [n]$.  Let $\widetilde{\code}$ be obtained from $\code$ by adding a redundant neuron $i_{0}$ for $\sigma$. 
%{\color{blue}may need to state a version for adding more than 1 redundant neuron}.  
Then:
\begin{enumerate}
    \item 
    If 
    $\{U_i\}_{i \in [n]}$
    is a realization (or closed-convex realization) of $\code$, 
    then 
    $\{U_i\}_{i \in [n]} \cup \{U_{i_0}:=U_{\sigma} \}$
    is a realization (or closed-convex realization) of $ \widetilde{\code}$.
    Conversely, 
    if 
    $\{U_i\}_{i \in [n]} \cup \{U_{i_0}\}$
    is a realization (or closed-convex realization) of $ \widetilde{\code}$, 
    then 
    $\{U_i\}_{i \in [n]}$
    is a realization (or closed-convex realization) of $\code$.
    \item The following is an inclusion-preserving bijection:
    %between the codewords of $\widetilde{\code}$ and those of $\code$:
    \begin{align*}
            \widetilde{\code} & \to \code \\
            \widetilde{\tau}& \mapsto (\widetilde{\tau} \smallsetminus \{i_0\})
    \end{align*}
\end{enumerate}
%{\color{violet}    Adding a redundant neuron preserves (closed) convexity~\cite{jeffs2020morphisms}, the containments of codewords (and hence the triplewise-intersection condition is preserved), and rigidity of a subset $\rig$.}
\end{lemma}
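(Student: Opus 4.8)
The plan is to prove part (2) first, since it is purely combinatorial and will be used in part (1), and then to establish part (1) by a direct atom computation in the forward direction and a short atom-decomposition argument in the converse direction.

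For part (2), I would begin by recording the structural feature of $\widetilde{\code}$ coming from Definition~\ref{def:redundant}: every codeword $\widetilde{\tau} \in \widetilde{\code}$ satisfies $i_0 \in \widetilde{\tau}$ if and only if $\sigma \subseteq (\widetilde{\tau} \smallsetminus \{i_0\})$ (call this ``consistency''), and $\widetilde{\tau} \in \widetilde{\code}$ precisely when $\widetilde{\tau}$ is consistent and $\widetilde{\tau} \smallsetminus \{i_0\} \in \code$. From this, the map $\widetilde{\tau} \mapsto \widetilde{\tau} \smallsetminus \{i_0\}$ is well-defined into $\code$, and it is a bijection with inverse sending $\tau \in \code$ to $\tau \cup \{i_0\}$ if $\sigma \subseteq \tau$ and to $\tau$ otherwise. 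For inclusion-preservation, the implication $\widetilde{\tau}_1 \subseteq \widetilde{\tau}_2 \Rightarrow \widetilde{\tau}_1 \smallsetminus \{i_0\} \subseteq \widetilde{\tau}_2 \smallsetminus \{i_0\}$ is immediate; conversely, if $\widetilde{\tau}_1 \smallsetminus \{i_0\} \subseteq \widetilde{\tau}_2 \smallsetminus \{i_0\}$ and $i_0 \in \widetilde{\tau}_1$, then $\sigma \subseteq \widetilde{\tau}_1 \smallsetminus \{i_0\} \subseteq \widetilde{\tau}_2 \smallsetminus \{i_0\}$, so consistency of $\widetilde{\tau}_2$ forces $i_0 \in \widetilde{\tau}_2$, whence $\widetilde{\tau}_1 \subseteq \widetilde{\tau}_2$.

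For the forward direction of part (1), given a realization $\{U_i\}_{i \in [n]}$ of $\code$ in a stimulus space $X$, set $U_{i_0} := U_\sigma = \bigcap_{i \in \sigma} U_i$ and $\widetilde{\mathcal{U}} := \{U_i\}_{i \in [n]} \cup \{U_{i_0}\}$. I would compute the atom $A^{\widetilde{\mathcal{U}}}_{\widetilde{\tau}}$ for every $\widetilde{\tau} \subseteq [n] \cup \{i_0\}$, splitting on whether $i_0 \in \widetilde{\tau}$ and using the two elementary facts that $U_\sigma \supseteq \bigcap_{i \in \tau} U_i$ when $\sigma \subseteq \tau$, while $U_\sigma \subseteq U_k$ for every $k \in \sigma$. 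A short case check then shows that $A^{\widetilde{\mathcal{U}}}_{\widetilde{\tau}} = \emptyset$ whenever $\widetilde{\tau}$ is inconsistent, and $A^{\widetilde{\mathcal{U}}}_{\widetilde{\tau}} = A^{\{U_i\}_{i\in[n]}}_{\widetilde{\tau} \smallsetminus \{i_0\}}$ whenever $\widetilde{\tau}$ is consistent. Combining this with the description of $\widetilde{\code}$ from part (2) gives ${\rm code}(\widetilde{\mathcal{U}}, X) = \widetilde{\code}$, and $\bigcup_i U_i$ is unchanged since $U_{i_0} = U_\sigma \subseteq U_k$; because $U_{i_0}$ is a finite intersection of the $U_i$, it is closed and convex whenever they are, so the closed-convex case follows at once.

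For the converse direction of part (1), suppose $\{U_i\}_{i \in [n]} \cup \{U_{i_0}\}$ realizes $\widetilde{\code}$, and fix $\tau \subseteq [n]$. A point of the atom $A^{\{U_i\}_{i\in[n]}}_{\tau}$ either lies in $U_{i_0}$ or it does not, so $A^{\{U_i\}_{i\in[n]}}_{\tau}$ is the disjoint union of $A^{\widetilde{\mathcal{U}}}_{\tau}$ and $A^{\widetilde{\mathcal{U}}}_{\tau \cup \{i_0\}}$; hence it is nonempty if and only if $\tau \in \widetilde{\code}$ or $\tau \cup \{i_0\} \in \widetilde{\code}$. By the structure of $\widetilde{\code}$ recorded in part (2), at most one of $\tau$ and $\tau \cup \{i_0\}$ can lie in $\widetilde{\code}$ -- the candidate is $\tau \cup \{i_0\}$ when $\sigma \subseteq \tau$ and $\tau$ otherwise -- and in either case the candidate lies in $\widetilde{\code}$ exactly when $\tau \in \code$. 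Therefore $A^{\{U_i\}_{i\in[n]}}_{\tau} \neq \emptyset \iff \tau \in \code$, i.e.\ $\{U_i\}_{i \in [n]}$ realizes $\code$ (and it is a closed-convex realization if the original one was). The only genuinely delicate point in the argument is the atom bookkeeping in the forward direction: one must keep straight that $U_{i_0} = U_\sigma$ behaves as a superset of $\bigcap_{i \in \tau} U_i$ precisely when $\sigma \subseteq \tau$ (so that it kills atoms whose label contains $\sigma$ but omits $i_0$) and as a subset of some $U_k$ otherwise (so that it kills atoms whose label contains $i_0$ but misses an element of $\sigma$); everything else is a direct unwinding of the definitions.
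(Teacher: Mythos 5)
Your proposal is correct, and it supplies details that the paper itself leaves out: the paper's ``proof'' of this lemma simply cites Jeffs for part (1) and calls part (2) ``straightforward to check.'' Your atom computation in the forward direction (showing $A^{\widetilde{\mathcal U}}_{\widetilde\tau}$ vanishes for inconsistent $\widetilde\tau$ and equals $A^{\mathcal U}_{\widetilde\tau\smallsetminus\{i_0\}}$ for consistent $\widetilde\tau$, using $U_\sigma\supseteq\bigcap_{i\in\tau}U_i$ when $\sigma\subseteq\tau$ and $U_\sigma\subseteq U_k$ for $k\in\sigma$) is exactly the kind of direct argument one would expect behind Jeffs's result, and your disjoint-decomposition $A^{\{U_i\}}_{\tau}=A^{\widetilde{\mathcal U}}_{\tau}\sqcup A^{\widetilde{\mathcal U}}_{\tau\cup\{i_0\}}$ handles the converse cleanly, which is a nice observation because there $U_{i_0}$ is \emph{not} assumed to equal $U_\sigma$ -- you correctly avoid needing that. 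Your proof of part (2), establishing the ``consistency'' characterization of $\widetilde{\code}$ and using it both for the bijection and for inclusion-preservation, is also correct and is in fact the right reusable skeleton for part (1). One minor point worth keeping in mind (not a flaw in your argument, but a hidden assumption shared with the paper) is that the statement implicitly assumes $\sigma\neq\emptyset$: if $\sigma=\emptyset$ then $U_\sigma=X$ and $i_0$ is added to \emph{every} codeword including $\emptyset$, which would conflict with Assumption~\ref{assumption:emtpy-codeword}; all the uses of this lemma in the paper have $\sigma\neq\emptyset$, so this is harmless, but you might note it.
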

\begin{proof}
Part~(1) is due to Jeffs~\cite{jeffs2020morphisms}, and part~(2) is straightforward to check. \end{proof}

\begin{lemma}[Redundant neurons and rigid structures] \label{lem:redundant-rigid}
Let $\code$ be a code on $n$ neurons.  
Consider subsets $\rig \subseteq [n]$ and $\sigma \subseteq [n]$ such that
%Let $\rig \subseteq [n]$. Let $\sigma \subseteq [n]$ be such that 
$\sigma \cap \rig \neq \emptyset$.  
Let $\widetilde{\code}$ be obtained from $\code$ by adding a redundant neuron $i_{0}$ for $\sigma$.
%{\color{blue}may need to state a version for adding more than 1 redundant neuron}.   
Then $\rig$ is a rigid structure of $\code$ if and only if $\rig \cup \{ i_0\}$ is a rigid structure of $\widetilde{\code}$.
\end{lemma}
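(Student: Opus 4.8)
The plan is to unwind both directions using Lemma~\ref{lem:redundant}, which already tells us that adding a redundant neuron is a reversible operation that preserves both closed-convex realizability and the codeword-containment (inclusion) structure. The key observation is that $\rig$ being a rigid structure is defined in terms of convexity of $\bigcup_{i \in \rig} U_i$ over all closed-convex realizations of a restricted code $\code|_{\sigma'}$, and we need to match up these unions under the bijection of Lemma~\ref{lem:redundant}(2). Since $\sigma \cap \rig \neq \emptyset$, the redundant neuron $i_0$ satisfies $U_{i_0} = U_\sigma \subseteq U_j$ for some $j \in \sigma \cap \rig$, so adding $i_0$ to $\rig$ does not change the union: $\bigcup_{i \in \rig \cup \{i_0\}} \widetilde U_i = \bigcup_{i \in \rig} U_i$ whenever $\widetilde U_{i_0} = U_\sigma$. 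This is the crucial point that makes the equivalence work.

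First I would fix a witnessing set $\sigma'$ for rigidity: recall from Definition~\ref{def:rigid}(2) and Remark~\ref{rem:sigma-vs-sigma'} that $\rig$ is a rigid structure of $\code$ iff there exists $\sigma'$ with $\rig \subseteq \sigma' \subseteq [n]$ such that $\bigcup_{i \in \rig} U_i$ is convex in every closed-convex realization $\{U_i\}_{i \in \sigma'}$ of $\code|_{\sigma'}$; and one may always take $\sigma' = [n]$. So it suffices to work with $\sigma' = [n]$ throughout (both $\code$ and $\widetilde\code$). For the forward direction, suppose $\rig$ is rigid in $\code$, and let $\{\widetilde U_i\}_{i \in [n] \cup \{i_0\}}$ be any closed-convex realization of $\widetilde\code$. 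By Lemma~\ref{lem:redundant}(1), $\{\widetilde U_i\}_{i \in [n]}$ is a closed-convex realization of $\code$, so by rigidity $\bigcup_{i \in \rig} \widetilde U_i$ is convex. Now I need $\bigcup_{i \in \rig \cup \{i_0\}} \widetilde U_i = \bigcup_{i \in \rig} \widetilde U_i$; this follows because $\widetilde U_{i_0}$ realizes the redundant neuron for $\sigma$, hence (in the realization of $\widetilde\code$) $\widetilde U_{i_0} \subseteq \widetilde U_j$ for every $j \in \sigma$, and $\sigma \cap \rig \neq \emptyset$ gives such a $j$ in $\rig$. For the converse, suppose $\rig \cup \{i_0\}$ is rigid in $\widetilde\code$, and let $\{U_i\}_{i \in [n]}$ be a closed-convex realization of $\code$. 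By Lemma~\ref{lem:redundant}(1), $\{U_i\}_{i \in [n]} \cup \{U_{i_0} := U_\sigma\}$ is a closed-convex realization of $\widetilde\code$, so $\bigcup_{i \in \rig \cup \{i_0\}} U_i$ is convex; and as before this union equals $\bigcup_{i \in \rig} U_i$ since $U_{i_0} = U_\sigma \subseteq U_j$ for some $j \in \sigma \cap \rig$. Hence $\rig$ is rigid in $\code$.

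One subtlety I would address carefully: the definition of ``closed-convex realization of $\code$'' in Definition~\ref{def:convex} involves a stimulus space $X$ containing the union; when invoking Lemma~\ref{lem:redundant}(1) in both directions, the stimulus space can be kept fixed (or adjusted by the standard Assumption~\ref{assumption:emtpy-codeword} reduction to $X = \mathbb{R}^d$), and one should check that Lemma~\ref{lem:redundant} as stated already handles this bookkeeping, which it does. A second point worth a sentence: the hypothesis $\sigma \cap \rig \neq \emptyset$ is essential and is exactly what lets the redundant-neuron union absorb into the $\rig$-union; without it, adding $i_0$ to $\rig$ could genuinely enlarge the union and the equivalence could fail.

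The main obstacle, such as it is, is purely a matter of matching conventions: verifying that ``$\widetilde U_{i_0}$ is contained in $\widetilde U_j$ for $j \in \sigma$'' holds in an \emph{arbitrary} closed-convex realization of $\widetilde\code$ (not just the canonical one built from $U_\sigma$). This follows from Lemma~\ref{lem:receptivefields}: since every codeword of $\widetilde\code$ containing $i_0$ also contains $\sigma$ by construction (Definition~\ref{def:redundant}), the receptive-field relationship $\widetilde U_{i_0} \subseteq \widetilde U_j$ holds for each $j \in \sigma$ in any realization of $\widetilde\code$. Once that is in hand, the two implications are each a two-line argument, and no further estimates are needed.
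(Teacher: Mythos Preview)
Your proposal is correct and follows essentially the same route as the paper's proof: both reduce to the observation that $\sigma \cap \rig \neq \emptyset$ forces $U_{i_0} \cup \bigcup_{j \in \rig} U_j = \bigcup_{j \in \rig} U_j$ in every realization of $\widetilde\code$, and then invoke Lemma~\ref{lem:redundant}(1). Your treatment is in fact more careful than the paper's one-line version, since you explicitly justify the containment $\widetilde U_{i_0} \subseteq \widetilde U_j$ in an \emph{arbitrary} realization via Lemma~\ref{lem:receptivefields}, and you spell out the reduction to $\sigma' = [n]$.
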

\begin{proof}
The assumption $\sigma \cap \rig \neq \emptyset$ implies that, 
in every realization $\{U_i\}_{i \in [n] } \cup \{ U_{i_0} \}$ of $\widetilde{\code}$, we have $\cup_{j \in \rig} U_j = U_{i_0} \cup (\cup_{j \in \rig} U_j)$. 
Now the desired result follows from Lemma~\ref{lem:redundant}(1). \end{proof}

We are now ready to prove our criterion for rigidity (Proposition~\ref{prop:path-rigid}).

\begin{proof}[Proof of Proposition~\ref{prop:path-rigid}]
%Let $\fullreal$ be a closed-convex realization of $\code$.  We must show $\cup_{i \in \rig} U_i$ is convex.  
%
Let $\widetilde{\code}$ be obtained from $\code$ by adding a redundant neuron $i_{\omega}$ for every codeword $\omega \in \code$ that contains 
at least one neuron of $\rig$ (that is, $\omega \cap \rig \neq \emptyset$)
and 
at least one neuron outside of $\rig$ (that is, $\omega \nsubseteq \rig$). 
Let 
$\mathcal{I}$ be the set of all such redundant neurons $i_{\omega}$ added to $\code$.
%$\mathcal{O}$ be the set of all such codewords $\tau \in \code$.

Consider the following functions between subsets of the codes $\code$, $\widetilde{\code}$, and $\widetilde{\code}|_{ \rig \cup \mathcal{I}}$:
\begin{align} \label{eq:bijections}
    \{ \tau \in \code \mid \tau \cap \rig \neq \emptyset \}
    \xleftarrow{\phi}
    \{ \widetilde{ \tau} \in \widetilde{\code} \mid \widetilde{\tau} \cap \rig \neq \emptyset \}
        \xrightarrow{\psi}
        &
    \{ \widetilde{ \tau} \cap (\rig \cup \mathcal{I})  \mid 
    \widetilde{ \tau} \in \widetilde{\code} \textrm{ and }
    \widetilde{\tau} \cap \rig \neq \emptyset \}    
    \\
     & \quad 
     ~=~ \left( \widetilde{\code}|_{ \rig \cup \mathcal{I} } \right) \smallsetminus \{ \emptyset \}~,
     \label{eq:equality-codes}
\end{align}
given by $\phi( \widetilde{\tau}) := (\widetilde{\tau} \cap [n])$, and 
$\psi(\widetilde{ \tau}) := \widetilde{ \tau} \cap (\rig \cup \mathcal{I})$.  (We will prove the equality~\eqref{eq:equality-codes}  below.)

The map $\phi$ is an inclusion-preserving bijection by (repeated application of) Lemma~\ref{lem:redundant}(2).
The map $\psi$ is inclusion-preserving and surjective by construction.  We claim that $\psi$ is also injective. 
To show this, it suffices to show (as $\phi$ is bijective) that $\psi \circ \phi^{-1}$ is injective.  To this end, let $\tau_1, \tau_2 \in \code$ be such that 
$\tau_1 \cap \rig \neq \emptyset$, 
$\tau_2 \cap \rig \neq \emptyset$, 
and
$\tau_1 \neq \tau_2$.  We must show that 
$\widetilde{\tau}_1 \cap ( \rig \cup \mathcal{I}) \neq \widetilde{\tau}_2 \cap ( \rig \cup \mathcal{I}) $, 
where (for $j=1,2$) $\widetilde{\tau}_j$ denotes the codeword in $\widetilde{\code}$ that corresponds to $\tau_j$: 
\begin{align*}
    \widetilde{\tau}_j ~:=~ \tau_j \cup \{ i_{\omega} \mid i_{\omega} \in \mathcal{I} \textrm{ and } \omega \subseteq \tau_j \}~.
\end{align*}
The assumption $\tau_1 \neq \tau_2$ implies that there exists a neuron $k\in [n]$ such that 
$k \in (\tau_1 \smallsetminus \tau_2)$
or
$k \in (\tau_2 \smallsetminus \tau_1)$.  By relabeling if necessary, we may assume that $k \in (\tau_1 \smallsetminus \tau_2)$.  We consider two cases, based on whether $k \in \rig$.  
% Case 1
If $k\in \rig$, then $k \in \left(\widetilde{\tau}_1 \cap ( \rig \cup \mathcal{I})\right) \smallsetminus \left( \widetilde{\tau}_2 \cap ( \rig \cup \mathcal{I}) \right) $ 
(here, we also use the fact that $k \in (\tau_1 \smallsetminus \tau_2) \subseteq ( \widetilde{\tau}_1 \smallsetminus \widetilde{\tau}_2) $).   
% Case 2
We now consider the remaining case, when $k \notin \rig$.
Then 
$i_{\tau_1} \in  \widetilde{\tau}_1 $ (as $\tau_1 \cap \rig \neq \emptyset$), and
$i_{\tau_1} \notin  \widetilde{\tau}_2 $ (because $\tau_1 \nsubseteq \tau_2$).
Hence, 
%$i_{\tau_1} \in ( \widetilde{\tau}_1 \smallsetminus \widetilde{\tau}_2) $ 
%and so 
as $i_{\tau_1} \in \mathcal{I}$, we have
$i_{\tau_1} \in \left(\widetilde{\tau}_1 \cap ( \rig \cup \mathcal{I})\right) \smallsetminus \left( \widetilde{\tau}_2 \cap ( \rig \cup \mathcal{I}) \right) $.  Thus, our claim holds.

Next, we prove the 
equality asserted in~\eqref{eq:equality-codes}.  
The containment $\subseteq$ is straightforward: 
if $\widetilde{\tau} \cap \rig \neq \emptyset$, 
then $\widetilde{\tau} \cap (\rig \cup \mathcal{I}) \neq \emptyset$.  
As for the reverse containment $\supseteq$, 
redundant neurons $i_{\omega}$ were added only for certain $\omega \in \code$
for which $\omega \cap \rig \neq \emptyset$; hence, 
$\widetilde{\tau} \cap (\rig \cup \mathcal{I}) \neq \emptyset$ implies that
$\widetilde{\tau} \cap \rig \neq \emptyset$.

To summarize, we have proven that the maps in~\eqref{eq:bijections} are inclusion-preserving bijections and that the equality in~\eqref{eq:equality-codes} holds.  
Hence, the codeword-containment graph of $\widetilde{\code}|_{\rig \cup \mathcal{I} } \smallsetminus \{ \emptyset \}$ is isomorphic to the codeword-containment graph of $\{ \tau \in \code \mid \tau \cap \rig \neq \emptyset \}$, which by hypothesis is a path. So, by  Lemma~\ref{lem:case-of-path-prop-with-all-neurons}, 
$(\rig \cup \mathcal{I}, \cup)$ is a rigid structure of 
 $\widetilde{\code}|_{\rig \cup \mathcal{I} }$ and thus (by definition) is also a rigid structure of $\widetilde{\code}$.  So, by repeated application of Lemma~\ref{lem:redundant-rigid}, $(\rig, \cup)$ is a rigid structure of the original code $\code$.
%
%{\color{red} Maybe we need to define $\widetilde{U}_j := U_j \cap (\cup_{i \in \rig} U_i)$, for all $j\in [n]$.  Then we just need to show $\cup_{j \in [n]} \widetilde{U}_j $ is convex.  Some of the $U_j$'s below need to be replaced with $\widetilde{U}_j$.}
\end{proof}

\begin{remark}[Converse of Proposition~\ref{prop:path-rigid} is false] \label{rem:conj-converse-prop-path-rigid}
It is natural to ask about the converse to the proposition -- that is, when $\Gamma$ is a path graph, if $(\rig, \cup) $ is a rigid structure, must the triplewise-intersection condition hold?  
The answer is ``no''.  For instance, when $\Gamma$ is the path $ 1 - 124 - 2 - 234$, we have that $(\{1,2,3,4\}, \cup)$ is rigid for trivial reasons: this code has no closed-convex realization (because the Interval Condition is violated by neuron $4$).  Yet, the triplewise-intersection condition fails: $1 \cap 124 \cap 2$ is empty.
%{\color{blue} May need to also add the Interval Condition hypothesis here!  This is to handle some graphs, e.g. $12 - 123 - 23 - 234 - 34- {\mathbf 1}34$, for which the proposition is vacuously true (in the example, $U_1$ is non-convex).}
On the other hand, requiring the Interval Condition makes the converse true; this is shown in forthcoming work of 
Luis Gomez, Loan Tran, and Elijah Washington.
\end{remark}

\subsection{Proof of Theorem~\ref{thm:rigid-when-gph-is-cycle}} \label{sec:proof-cycle}
We use Proposition~\ref{prop:path-rigid} to prove the result, stated earlier, asserting the non-closed-convexity of codes for which the codeword-containment graph is a cycle and the triplewise-intersection condition holds.

\begin{proof}[Proof of Theorem~\ref{thm:rigid-when-gph-is-cycle}]
Assume that the codeword-containment graph of $\code \smallsetminus \{ \emptyset \}$ is a cycle $\sigma_1 - \sigma_2 - \dots - \sigma_q - \sigma_1$ (so, $q \geq 3$) 
and that the triplewise intersections $\sigma_i \cap \sigma_{i+1} \cap \sigma_{i+2}$ are nonempty. We first rule out the $q=3$ case.  In this case, the three (nonempty) codewords $\sigma_i$ are linearly ordered, that is, after relabeling if needed, $\sigma_1 \supsetneq \sigma_2 \supsetneq \sigma_3$.  Hence, $\sigma_1 \cap \sigma_2 \cap \sigma_3 = \sigma_3 \neq \emptyset$, which violates the triplewise-intersection condition.

Assume that $q \geq 4$.
Relabel, if necessary, so that $\sigma_1$ is a maximal codeword.  
Hence, $\sigma_1 \supsetneq \sigma_2$.  Also, $\sigma_2 \subsetneq \sigma_3$, because if instead we had $\sigma_2 \supsetneq \sigma_3$, then $\sigma_1 - \sigma_3$ would be an edge of the codeword-containment graph, which is impossible as the graph is a cycle of length $q \geq 4$. 
We claim that
%Using the fact that the containment relationships between subsequent codewords alternate, {\color{red} FIX THIS! as in~\eqref{eq:containments-alternate} from an earlier proof}, 
(i) no nonempty codeword of $\code$ is properly contained in $\sigma_2$, and (ii) $\sigma_1$ and $\sigma_3$ are the only codewords that properly contain $\sigma_2$.  Indeed, either were violated, the codeword-containment graph would contain an edge from $\sigma_2$ to some $\sigma_k$, with $k \neq 1,3$, and hence would not be a cycle.
Facts (i) and (ii) will be used below.

Let $\rig:= \sigma_2$ and $\conn:= [n] \smallsetminus \rig$.  The pair $(\rig, \cap) $ is automatically a rigid structure, and we now claim that $(\conn, \cup)$ is also a rigid structure.  To see this, note that no nonempty codeword is properly contained in $\rig=\sigma_2$ and each such codeword contains at least one neuron from $\conn$.  Thus, the codeword-containment graph of $\{ \tau \in \code \mid \tau \cap \conn \neq \emptyset \}$ is the path $\sigma_3 - \sigma_4 - \dots - \sigma_q - \sigma_1$.  
The triplewise-intersection property for this path holds, by hypothesis. 
Hence, Proposition~\ref{prop:path-rigid} implies that $(\conn, \cup) $ is a rigid structure, as desired.

The distinguished subcode arising from $(\rig, \cap) $ and $(\conn, \cup)$ is $\{ \tau \in \code \mid \rig=\sigma_2 \subseteq \tau ~{\rm and}~ \tau \cap \conn \neq \emptyset \} = \{\sigma_1,\sigma_3\}$, and the resulting codeword-containment graph is disconnected ($\sigma_1 \nsubseteq \sigma_3$ and $\sigma_1 \nsupseteq \sigma_3$).  So, by Theorem~\ref{thm:rigid}, $\code$ is not closed-convex.
\end{proof}

\begin{remark} \label{rem:many-choices-of-r-and-c}
From the proof of Theorem~\ref{thm:rigid-when-gph-is-cycle}, 
we see that 
codes satisfying the hypotheses of that theorem
have many choices of rigid structures $(\rig, \cap)$ and $([n] \smallsetminus \rig, \cup)$ that can be used to prove non-closed-convexity.  For instance, for the code $C15$ (from Example~\ref{ex:cycle}), $\rig$ can be chosen to be any of the following subsets of $\{1,2,3,4,5\}$: $12$, $15$, $45$, $34$, and $23$. 
\end{remark}

%-----
%   EXAMPLES
%-----
\subsection{Examples} \label{sec:examples-rigid}
Here we show that four of the five known non-closed-convex codes that are open-convex (those listed in Table~\ref{tab:5-codes}), can be handled by Theorem~\ref{thm:rigid}.  The rigid structures that we use are listed in Table~\ref{tab:5-codes} and some are shown schematically in Figures~\ref{fig:C6-rigid} and~\ref{fig:C10-rigid}.
%--\ref{fig:cruz-et-al-code-rigid}.

% TABLE
\begin{table}[ht]
\begin{tabular}{lccc}
\hline 
Code      & $(\rig, \bigcirc)$ & $(\conn, \bigtriangleup)$ & Distinguished subcode     \\
%Code      & $(\rig, *_{\rig})$ & $(\conn, *_{\conn})$ & Distinguished subcode     \\
\hline 
$C6$        & $(1235, \cup)$         & 4      & $\{145, 234\}$        \\
$C10$       & $(34, \cup)$ %(with $\sigma' = 1234$)      
                        & 5 & $\{ 135,245 \}$              \\
$C15$       & $(345,\cup)$   & $(12,\cap)$  & $\{ 123, 125 \}$ \\
% CODE FROM CRUZ ET AL.
$\CruzCode$ & $(1456, \cup)$               & $(23, \cap)$  & $\{ 123, 234\}$   \\
%$\code_8$ & $(78,\cup)$ (with $\sigma' = 12378$)  & 4   & $\{ 1457 ,3468 \}$           \\
\hline
\end{tabular}
\caption{
For each code, we list two rigid structures % $(\rig, *_{\rig})$ and $(\conn, *_{\conn})$ 
%are given, 
and the resulting distinguished subcode.  
%(When $\sigma'$ is not given for some $(\rig, \cup)$, we mean $\sigma'=\rig$.) 
%As each such subcode has disconnected codeword-containment graph, Theorem~\ref{thm:rigid} implies that the four codes are non-closed-convex.
See Proposition~\ref{prop:examples-rigid}.
} \label{tab:5-codes}
\end{table}

\begin{proposition} \label{prop:examples-rigid}
    The codes $C6$, $C10$, $C15$, and $ \CruzCode$ (from Example~\ref{ex:goldrup-phillipson}) satisfy the hypotheses of Theorem~\ref{thm:rigid}, where the rigid structures are those in Table~\ref{tab:5-codes}, and are therefore non-closed-convex.
\end{proposition}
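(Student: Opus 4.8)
The plan is to treat the four codes one at a time, and for each to (a) confirm that the two entries of Table~\ref{tab:5-codes} are rigid structures of $\code$, (b) compute the distinguished subcode $\code'$ from Definition~\ref{def:subcode-from-r-and-c}, and (c) observe that the codeword-containment graph of $\code'$ is disconnected; Theorem~\ref{thm:rigid} then immediately yields that $\code$ is not closed-convex. Steps (b) and (c) are quick: a codeword $\sigma \in \code$ lies in $\code'$ exactly when it meets $\rig$ or contains $\rig$ (according to $\bigcirc$) and likewise for $\conn$ (according to $\bigtriangleup$), and in all four cases this picks out precisely two $\subsetneq$-incomparable codewords — $\{145,234\}$ for $C6$, $\{135,245\}$ for $C10$, $\{123,125\}$ for $C15$, and $\{123,234\}$ for $\CruzCode$ — so the codeword-containment graph of $\code'$ consists of two isolated vertices and is disconnected. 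The singleton entries ``$4$'' (for $C6$) and ``$5$'' (for $C10$) are rigid structures for free: for a single neuron $j$ the union $\cup_{i\in\{j\}}U_i=U_j$ is convex in any convex realization, and conditions~(1)--(2) of Definition~\ref{def:subcode-from-r-and-c} both collapse to ``$j\in\sigma$''. Similarly $(12,\cap)$ and $(23,\cap)$ are rigid with no work by Definition~\ref{def:rigid}(1).

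The substantive step is to verify that $(1235,\cup)$, $(34,\cup)$, $(345,\cup)$, and $(1456,\cup)$ are rigid structures, for which I would apply Proposition~\ref{prop:path-rigid}: for each $\rig$, form the subcode $\{\tau\in\code\mid\tau\cap\rig\neq\emptyset\}$, check its codeword-containment graph is a path, and verify the triplewise-intersection condition along that path. For $C15$ and $\CruzCode$, whose codeword-containment graphs are the length-$10$ and length-$12$ cycles drawn in Example~\ref{ex:cycle}, deleting the single codeword disjoint from $\rig$ — namely $12$ for $C15$, respectively $23$ for $\CruzCode$ — breaks the cycle into a path, and the consecutive triple intersections needed are among those already checked in Example~\ref{ex:cycle}. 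For $C6$ one obtains the path $145 - 15 - 125 - 12 - 123 - 23 - 234$ (or, restricting to $\{1,2,3,5\}$ as in Example~\ref{ex:goldphil-C6-rig}, the shorter path $15 - 125 - 12 - 123 - 23$), with consecutive triple intersections $15,\,1,\,12,\,2,\,23$; for $C10$ one obtains $135 - 13 - 134 - 34 - 234 - 24 - 245$, with consecutive triple intersections $13,\,3,\,34,\,4,\,24$. All of these are nonempty, so Proposition~\ref{prop:path-rigid} applies in each case.

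The main (and essentially only) obstacle is the graph-theoretic bookkeeping in the previous paragraph: confirming that each of the four subcodes $\{\tau\in\code\mid\tau\cap\rig\neq\emptyset\}$ really has a path as its codeword-containment graph — that is, that no chord has been overlooked and no vertex is left with the wrong degree. These graphs are small, and for $C15$ and $\CruzCode$ they are literally ``cycle minus one vertex'', so the check is routine. Once it is carried out, Theorem~\ref{thm:rigid} applied to the rigid structures of Table~\ref{tab:5-codes} and the corresponding distinguished subcodes shows that $C6$, $C10$, $C15$, and $\CruzCode$ are all non-closed-convex.
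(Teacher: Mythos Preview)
Your proposal is correct and follows essentially the same approach as the paper: verify the $\cup$-rigid structures via Proposition~\ref{prop:path-rigid}, note the $\cap$-structures are automatic, compute each two-element distinguished subcode, and invoke Theorem~\ref{thm:rigid}. The only minor difference is that for $C6$ and $C10$ you apply Proposition~\ref{prop:path-rigid} to the full code (obtaining the seven-vertex paths $145 - 15 - 125 - 12 - 123 - 23 - 234$ and $135 - 13 - 134 - 34 - 234 - 24 - 245$), whereas the paper first restricts to $C6|_{1235}$ and $C10|_{1234}$ to get the shorter five-vertex paths; both routes are valid.
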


\begin{proof}
For the code $C6$, we saw in Example~\ref{ex:goldphil-C6-rig} that 
$(\rig, \bigcirc ) := (1235, \cup)$ is a rigid structure.  We also consider the rigid structure 
$(\conn, \bigtriangleup):= (4, \cap)$.  The resulting distinguished subcode is 
$C6' = \{ 145, 234 \}$, which has disconnected codeword-containment graph.
%has 2 connected components (indeed, $145 \nsubseteq 234$ and $145 \nsupseteq 234$).  
So, by Theorem~\ref{thm:rigid}, the code $C6$ is not closed-convex.

Next, we consider the code $C10$.  Applying Proposition~\ref{prop:path-rigid}
to $C10|_{1234}$ and $\rig = \{3,4\}$, the codeword-containment graph $\Gamma$ is the 
path $13 - 134 - 34 - 234 - 24$, and the intersections 
 $13 \cap 134 \cap 34  = 3$, 
  $134 \cap 34 \cap 234  =34$, and 
   $ 34 \cap 234 \cap 24=4$ are nonempty.  We conclude that $(34, \cup) $ is a rigid structure of $C10$.  The other rigid structure we consider is $(5, \cap)$.  The resulting distinguished subcode is $\{135, 245\}$, and so, as above, the code $C10$ is not closed-convex.

Finally, the codes $C15$ and $\CruzCode$ were already analyzed in Example~\ref{ex:cycle} (see also the proof of Theorem~\ref{thm:rigid-when-gph-is-cycle}).
%
%For the code $C15$, 
%we apply Proposition~\ref{prop:path-rigid}
%to $C10$ and $\rig=\{3,4,5\}$.  
%Here the graph $\Gamma$ is the path $ 125 - 15 - 145  - 45 - 345 - 34 - 234 - 23 - 123$, 
%and we see that the relevant triplewise intersections are nonempty.  Thus, $(345, \cup)$ is a rigid structure.  Together with the rigid structure $(12, \cap)$, we obtain the distinguished subcode $\{123, 125\}$, and therefore, as before, $C15$ is not closed-convex.
%
%Finally, we apply Proposition~\ref{prop:path-rigid}
%to $\CruzCode$ and $\rig=\{1,4,5,6\}$, which yields
%the path graph $123 - 12 - 126 - 16 - 156 - 56 - 456 - 45 - 345 - 34 - 234$.  The relevant triplewise intersections are nonempty, and so $(1456, \cup)$ is a rigid structure.  We also use the rigid structure $(23, \cap)$ to get the distinguished subcode $\{123, 234\}$, and so $\CruzCode$ is not closed-convex.
\end{proof}

    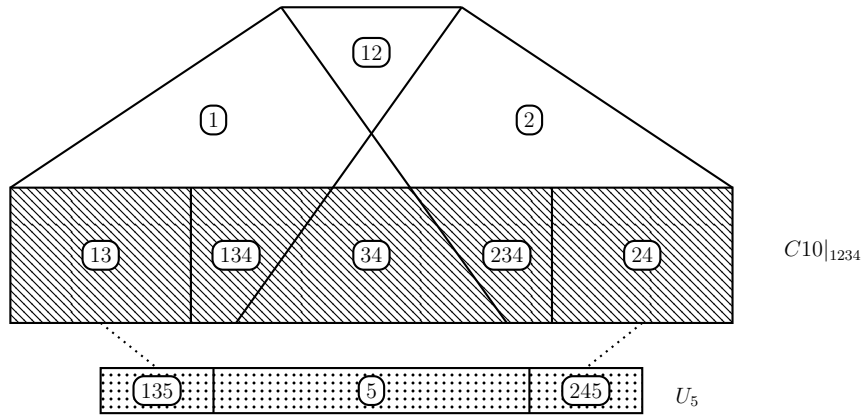
\begin{figure}[ht]
        \centering
         \begin{tikzpicture}[thick,scale=.6,every node/.style={scale=0.7}]
            \begin{scope}
             	\node[label=$C10 | _ {1234} $] at (18,1) {};
                \draw[pattern=north west lines] (0,0) rectangle (16,3){};
                \draw (4,0)--(4,3) {};
                \draw (12,0) -- (12,3) {};
                \draw (6,7) -- (10,7) {};
                \draw (0,3) -- (6,7) {};
                \draw (16,3) -- (10,7) {};
                \draw (5,0)--(10,7){};
                \draw (11,0)--(6,7){};
                \draw(2,1.5) node [fill=white,draw,rounded corners]{13};
                \draw(5,1.5) node [fill=white,draw,rounded corners]{$134$};
                \draw(8,1.5) node [fill=white,draw,rounded corners]{$34$};
                \draw(11,1.5) node [fill=white,draw,rounded corners]{$234$};
                \draw(14,1.5) node [fill=white,draw,rounded corners]{$24$};
                \draw(4.5,4.5) node [fill=white,draw,rounded corners]{$1$};
                \draw(11.5,4.5) node [fill=white,draw,rounded corners]{$2$};
                \draw(8,6) node [fill=white,draw,rounded corners]{$12$};
                
             	\node[label=$U_5 $] at (15,-2.2) {};
                \draw[pattern = dots] (2,-1) rectangle (14,-2){};
                \draw(3.25,-1.5) node [fill=white,draw,rounded corners]{$135$};
                \draw(12.75,-1.5) node [fill=white,draw,rounded corners]{$245$};
                \draw(8,-1.5) node [fill=white,draw,rounded corners]{$5$};
                \draw(4.5,-1)--(4.5,-2){};
                \draw(11.5,-1)--(11.5,-2){};

                \draw[dotted] (2,0) -- (3.25,-1);
                \draw[dotted] (14,0) -- (12.75,-1);
            \end{scope}
        \end{tikzpicture}
         \caption{The code $C10$, with two rigid structures (shaded).  Dotted lines indicate pairs of codewords arising from distinct rigid structures, with one contained in the other.}
                \label{fig:C10-rigid}
    \end{figure}

Next, we give the first infinite family of codes that are open-convex but non-closed-convex.

\begin{theorem} \label{thm:infinite-family-non-closed}
For $n \geq 5$, the following neural code is open-convex but not closed-convex:
\begin{align}
\label{eq:infinite-fam-1}
\ncfamn ~=~ & 
            \{12,~ \mathbf{123},~ 23,~ \mathbf{234},~ 34,~ \dots,~ 
                \mathbf{ (n-2)(n-1)n},~ (n-1)n, \\     
            & \quad \mathbf{ 12(n+1)},~ n+1, ~
            \mathbf{
            (n-1)n(n+1)}, ~ \emptyset \}. \notag
            %\label{eq:infinite-fam-2}
\end{align}
%The collection $\{\ncfamn\}_{n=5}^{\infty}$ is a family of non-closed-convex codes, where 
\end{theorem}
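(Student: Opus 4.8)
The plan is to prove the two halves separately: non-closed-convexity via our rigid-structure criterion, and open-convexity via an explicit planar ``pinwheel.''  For the non-closed half, a direct check shows the codeword-containment graph of $\ncfamn \smallsetminus \{\emptyset\}$ is the cycle
\[
12(n+1) - 12 - 123 - 23 - 234 - 34 - \cdots - (n-2)(n-1)n - (n-1)n - (n-1)n(n+1) - (n+1) - 12(n+1),
\]
obtained by interleaving the pairs $i(i+1)$ ($1 \le i \le n-1$), the triples $i(i+1)(i+2)$ ($1 \le i \le n-2$), and the three extra codewords.  The cycle criterion (Theorem~\ref{thm:rigid-when-gph-is-cycle}) does not apply directly, since its triplewise-intersection hypothesis fails at the consecutive triple $12,\ 12(n+1),\ (n+1)$ --- and symmetrically at $(n-1)n,\ (n-1)n(n+1),\ (n+1)$ --- because $12 \cap (n+1) = \emptyset$.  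So instead I would invoke Theorem~\ref{thm:rigid} with the rigid structures $(\rig, \bigcirc) := (\{1,\dots,n\}, \cup)$ and $(\conn, \bigtriangleup) := (\{n+1\}, \cap)$.

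The pair $(\{n+1\}, \cap)$ is a rigid structure by Definition~\ref{def:rigid}(1).  For $(\{1,\dots,n\}, \cup)$ I would apply Proposition~\ref{prop:path-rigid} with $\rig = \{1,\dots,n\}$: the code $\{\tau \in \ncfamn \mid \tau \cap \{1,\dots,n\} \neq \emptyset\}$ equals $\ncfamn \smallsetminus \{\emptyset, \{n+1\}\}$, whose codeword-containment graph is the cycle above with the vertex $(n+1)$ deleted, namely the path $12(n+1) - 12 - 123 - 23 - \cdots - (n-2)(n-1)n - (n-1)n - (n-1)n(n+1)$, and one checks that every consecutive triple along \emph{this} path has nonempty intersection (inside the ``chain'' a consecutive triple meets in one of its pairs or singletons, while at the two ends $12(n+1) \cap 12 \cap 123 = 12$ and $(n-2)(n-1)n \cap (n-1)n \cap (n-1)n(n+1) = (n-1)n$; the offending triples through $(n+1)$ have been removed).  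Hence $(\{1,\dots,n\},\cup)$ is a rigid structure of $\ncfamn$.  By Definition~\ref{def:subcode-from-r-and-c}, the \distinguished subcode arising from these two rigid structures is $\{\,\sigma \in \ncfamn \mid \sigma \cap \{1,\dots,n\} \neq \emptyset \text{ and } n+1 \in \sigma\,\} = \{12(n+1),\ (n-1)n(n+1)\}$; since $n \ge 5$ forces $\{1,2\} \cap \{n-1,n\} = \emptyset$, these two codewords are incomparable, so the codeword-containment graph of the \distinguished subcode is disconnected.  Theorem~\ref{thm:rigid} then gives that $\ncfamn$ is not closed-convex.

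For open-convexity, I would exhibit an explicit realization of $\ncfamn$ by open convex sets in $\mathbb{R}^2$, arranged as a pinwheel with $n+1$ blades (Figure~\ref{fig:pinwheel-infinte-family}): the sets $U_1,\dots,U_n$ form a fan around a central region so that the atoms of $12, 123, 23, 234, \dots, (n-1)n$ occur in that linear order, while $U_{n+1}$ is a convex set bridging the two ends of the fan, meeting only the $12$-atom and the $(n-1)n$-atom and thereby creating exactly the three further atoms $12(n+1)$, $(n+1)$, and $(n-1)n(n+1)$.  Verifying that this collection realizes $\ncfamn$ is a routine inspection of which intersections of the $U_i$ are nonempty.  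I expect the open realization to be the main obstacle: the pinwheel must be laid out uniformly in $n$ so that each of $U_2,\dots,U_{n-1}$ --- which must contain five consecutive atoms of the cycle --- stays convex while no spurious atom appears, the tightest constraints arising for small $n$.  By contrast, the non-closed half is essentially bookkeeping on top of Theorem~\ref{thm:rigid} and Proposition~\ref{prop:path-rigid}.  Finally, it is worth recording (cf.\ Remark~\ref{rmk:pinwheel}) that this same pinwheel realizes the code with open sets but, by the above, cannot with closed sets --- exactly the discrepancy the rigid-structure criterion is designed to detect.
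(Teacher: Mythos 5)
Your proposal is correct and takes essentially the same route as the paper: Proposition~\ref{prop:path-rigid} with $\rig=\{1,\dots,n\}$ to obtain the rigid structure $([n],\cup)$, paired with the automatic rigid structure $(\{n+1\},\cap)$, giving the disconnected distinguished subcode $\{12(n+1),\,(n-1)n(n+1)\}$ and thus non-closed-convexity via Theorem~\ref{thm:rigid}, together with the explicit planar pinwheel of Figure~\ref{fig:pinwheel-infinte-family} for open-convexity. If anything you are slightly more careful than the paper's prose, which describes the path $\Gamma$ as running only ``from $12$ to $(n-1)n$'' and thereby appears to omit its two endpoints $12(n+1)$ and $(n-1)n(n+1)$, which by the definition of $\Gamma$ in Proposition~\ref{prop:path-rigid} do belong to the graph and require the end-of-path triplewise checks you supply.
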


\begin{proof}
It is straightforward to check that the open-convex realization of $\mathcal{D}_5$ shown in Figure~\ref{fig:pinwheel-infinte-family} extends to all $\mathcal{D}_n$, for $n \geq 5$.

        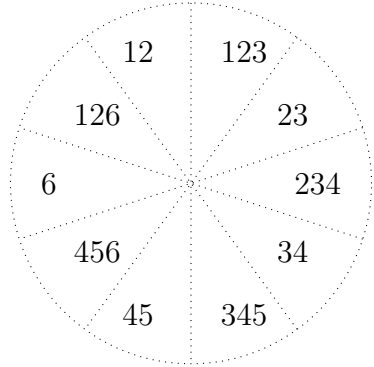
\begin{figure}[ht]
        \centering
    \begin{tikzpicture}[scale=2.4]
    \draw[black,  dotted] (0,1) -- ({sin(5*\z)},{cos(5*\z)});
    \draw[black,  dotted]  ({sin(1*\z)},{cos( 1*\z)}) -- ({sin( 6*\z)},{cos( 6*\z)}) ;
    \draw[black,  dotted]  ({sin(2*\z)},{cos( 2*\z)}) -- ({sin( 7*\z)},{cos( 7*\z)}) ;
    \draw[black,  dotted]  ({sin(3*\z)},{cos( 3*\z)}) -- ({sin( 8*\z)},{cos( 8*\z)}) ;
    \draw[black,  dotted]  ({sin(4*\z)},{cos( 4*\z)}) -- ({sin( 9*\z)},{cos( 9*\z)}) ;
    % CIRCLE
    \draw[black, dotted] circle(1);
    \node[label=left:$234$] at ({sin(2*\z)/2+sin(3*\z)/2},{cos(2*\z)/2+cos(3*\z)/2}){};
    \node[label=above left:$34 $] at ({sin(3*\z)/2+sin(4*\z)/2},{cos(3*\z)/2+cos(4*\z)/2}){};
    \node[label=$ 345$] at ({sin(4*\z)/2+sin(5*\z)/2},{cos(4*\z)/2+cos(5*\z)/2}){};
    \node[label=$45 $] at ({sin(5*\z)/2+sin(6*\z)/2},{cos(5*\z)/2+cos(6*\z)/2}){};
    \node[label=above right: $456 $] at ({sin(6*\z)/2+sin(7*\z)/2},{cos(6*\z)/2+cos(7*\z)/2}){};
    \node[label=right:$ 6$] at ({sin(7*\z)/2+sin(8*\z)/2},{cos(7*\z)/2+cos(8*\z)/2}){};
    \node[label=below right:$126 $] at ({sin(8*\z)/2+sin(9*\z)/2},{cos(8*\z)/2+cos(9*\z)/2}){};
    \node[label=below:$12 $] at ({sin(9*\z)/2+sin(10*\z)/2},{cos(9*\z)/2+cos(10*\z)/2}){};
    \node[label=below:$ 123$] at ({sin(10*\z)/2+sin(1*\z)/2},{cos(10*\z)/2+cos(1*\z)/2}){};
    \node[label=below left:$23 $] at ({sin(1*\z)/2+sin(2*\z)/2},{cos(1*\z)/2+cos(2*\z)/2}){};
    \end{tikzpicture}
        \caption{An open-convex realization of $\mathcal{D}_5$ in $\mathbb{R}^2$.}
        \label{fig:pinwheel-infinte-family}
        \end{figure}

%{\color{red} draw figure showing $\rig$ and $\conn$ like the ones Patrick drew, for the $n=6$ code?}
We apply  Proposition~\ref{prop:path-rigid}
to the code $\mathcal{D}_n$ and $\rig = \{1,2,\dots,n\}$.  The resulting graph $\Gamma$ is the 
path on the codewords -- in that order, from $12$ to $(n-1)n$ -- shown in line~\eqref{eq:infinite-fam-1}.  
The consecutive triplewise intersections are easily seen to be nonempty, and so $([n], \cup) $ is a rigid structure of the code.  
See Figure~\ref{fig:D_n-rigid}.
The other rigid structure we consider is $(n+1, \cap)$.  The resulting distinguished subcode is $\{ 12(n+1),
            (n-1)n(n+1) \}$, and so by Theorem~\ref{thm:rigid},  $\mathcal{D}_n$ is not closed-convex.  
\end{proof}

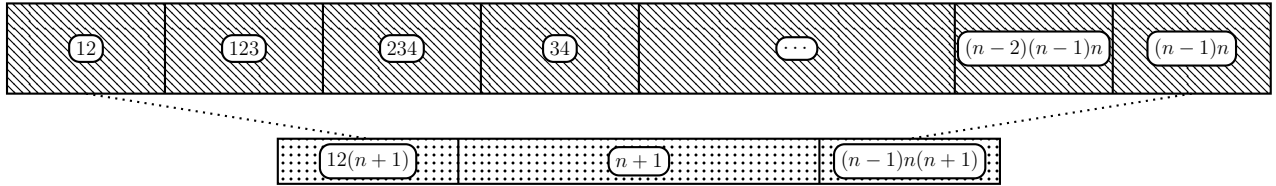
\begin{figure}[ht]
        \centering
        \begin{tikzpicture}[thick,scale=.6, every node/.style={scale=0.65}]
            \begin{scope}
             	%\node[label={$\mathcal{D}_n | _ {123\cdots n}$}] at (29.5,0.2) {};
                \draw[pattern=north west lines] (0,0) rectangle (28,2){};
                \draw (3.5,0)--(3.5,2) {};
                \draw (7,0)--(7,2) {};
                \draw (10.5,0)--(10.5,2) {};
                \draw (14,0)--(14,2) {};
                \draw (21,0)--(21,2) {};
                \draw (24.5,0)--(24.5,2) {};
                \draw(1.75,1) node [fill=white,draw,rounded corners]{$12$};
                \draw(5.25,1) node [fill=white,draw,rounded corners]{$123$};
                \draw(8.75,1) node [fill=white,draw,rounded corners]{$234$};
                \draw(12.25,1) node [fill=white,draw,rounded corners]{$34$};
                \draw(17.5,1) node [fill=white,draw,rounded corners]{$\cdots$};
                \draw(22.75,1) node [fill=white,draw,rounded corners]{$(n-2)(n-1)n$};
                \draw(26.25,1) node [fill=white,draw,rounded corners]{$(n-1)n$};
                %
             	%\node[label={$\mathcal{D}_n | _{(n+1)} \cup \{12(n+ 1),(n−1)n(n+ 1)\}$}] at (26,-2.2) {};
                \draw[pattern= dots] (6,-1) rectangle (22,-2){};
                \draw(8,-1.5) node [fill=white,draw,rounded corners]{$12(n+1)$};
                \draw(14,-1.5) node [fill=white,draw,rounded corners]{$n+1$};
                \draw(20,-1.5) node [fill=white,draw,rounded corners]{$(n-1)n(n+1)$};
                \draw(10,-1)--(10,-2){};
                \draw(18,-1)--(18,-2){};
                
                \draw[dotted] (1.75,0) -- (8,-1);
                \draw[dotted] (26.25,0) -- (20, -1);
            \end{scope}
        \end{tikzpicture}
        \caption{ The code $\mathcal{D}_n$, viewed as the union of two rigid structures. Dotted lines indicate pairs of codewords arising from distinct rigid structures, with one contained in the other.}
                \label{fig:D_n-rigid}
    \end{figure}

\begin{remark} \label{rem:partial-order}
Recall that closed-convexity is ``minor-closed'' with respect to the partial order defined in~\cite{jeffs2020morphisms} (see \cite[Proposition~9.3]{jeffs2019embedding}).  
It would therefore be interesting to see whether,
with respect to this partial order, 
the codes $\ncfamn$ are minimally non-closed-convex.
\end{remark}

\begin{remark} \label{rem:scan-codes}
Another direction for future work is to use the results in this section to scan codes on 6 neurons for non-closed-convexity, like was done for open-convexity in the work of Ruys de Perez, Matusevich, and Shiu~\cite{wheels}.
\end{remark}

The next example is a non-closed-convex code that, as far as we know, does not satisfy the hypotheses of Theorem~\ref{thm:rigid}, but may indicate a way to extend our results.

        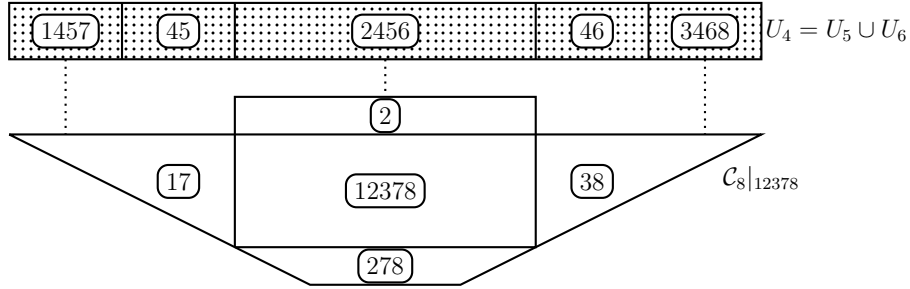
\begin{figure}[ht]
        \centering
        \begin{tikzpicture}[thick,scale=.5,every node/.style={scale=0.8}]
            \begin{scope}
             	\node[label={$U_4= U_5 \cup U_6$}] at (22,6) {};
             	\node[label={$\code_8 |_{12378}$}] at (20,2) {};
                \draw(0,4)--(8,0)--(12,0)--(20,4)--(0,4){}; %pattern=north west lines
                \draw (6,1)--(14,1)--(14,5)--(6,5)--(6,1) {};
                \draw[pattern = dots] (0,6) rectangle (20,7.5){};
                \draw(3,6)--(3,7.5){};
                \draw(6,6)--(6,7.5){};
                \draw(14,6)--(14,7.5){};
                \draw(17,6)--(17,7.5){};
                \draw(4.5,2.75) node [fill=white,draw,rounded corners]{$17$};
                \draw(10,2.5) node [fill=white,draw,rounded corners]{$12378$};
                \draw(15.5,2.75) node [fill=white,draw,rounded corners]{$38$};
                \draw(10,.5) node [fill=white,draw,rounded corners]{$278$};
                \draw(10,4.5) node [fill=white,draw,rounded corners]{$2$};
                
                \draw(1.5,6.75) node [fill=white,draw,rounded corners]{$1457$};
                \draw(10,6.75) node [fill=white,draw,rounded corners]{$2456$};
                \draw(4.5,6.75) node [fill=white,draw,rounded corners]{$45$};
                \draw(15.5,6.75) node [fill=white,draw,rounded corners]{$46$};
                \draw(18.5,6.75) node [fill=white,draw,rounded corners]{$3468$};
                
                \draw[dotted] (1.5,6)-- (1.5,4);
                \draw[dotted] (18.5,6)-- (18.5,4);
                \draw[dotted] (10,6)-- (10,5);
            \end{scope}
        \end{tikzpicture}
         \caption{The code $\code_8$.  The shaded region at the top is rigid.  Dotted lines indicate pairs of codewords, one contained in the other, arising from distinct regions.}
                \label{fig:non-monotone-code-rigid}
    \end{figure}

\begin{example} \label{ex:non-monotone-code-rigid}
We revisit the non-closed-convex code 
$\code_8$ from Example~\ref{ex:non-monotone-code}.
It has a rigid structure (shown in Figure~\ref{fig:non-monotone-code-rigid}). Informally, 
this structure must ``bend'' to intersect the required atoms in the rest of the code, and so 
$\code_8$ is non-closed-convex. 
It would be interesting in the future to generalize our results to accommodate this code.
%Patrick's thoughts: {\em Note: Doesn't fit definition of rigid. The issue is not that $\mathcal{C}_8$ restricted to $\{U_1,U_2,U_3,U_7,U_8\}$ is rigid and can't bend, but rather that it can not bend in the right direction without forcing $U_2$ to be non-convex once adding back in $\mathfrak{C}$.}  Also: The figure is meant to {\em provide some other geometric reason as to why it is non-closed. Notice that for the shaded region to be non-convex it had to "push" $A_{U_2}$ towards the "rear-end" of the hinge (seen in your figure). If we reintroduced the connector, since $U_2$ is also a receptive field of the connector, either the shaded structure is bending the wrong way or $U_2$ would be non-convex.}
\end{example}

We end this section by showing how rigid structures can be used to preclude closed-convexity, even when the results in this section do not directly apply (or are difficult to apply).  
The idea, which we illustrate in the following example, is to replace a rigid structure (which is necessarily convex) by a new neuron.

\begin{example} \label{ex:simplifaction-through-rigidity}
Consider the following code on 8 neurons: 
\begin{align*}
        \mathcal{D} ~&=~ \{\mathbf{12467},\mathbf{2678},\mathbf{123},
        \mathbf{138},\mathbf{345},
        \mathbf{456},
         \\
        & \quad \quad 
        1246,1267,
        2467,267,
        467,12,13,45,46,67,
        3,8, \emptyset \}~.
\end{align*}
By applying  Lemma~\ref{lem:case-of-path-prop-with-all-neurons}
to the restricted code $\mathcal{D}|_{4567}$, we conclude that $(4567,\cup)$ is a rigid structure of 
$\mathcal{D}|_{4567}$ and thus (by definition) is also a rigid structure of
$\mathcal{D}$, shown with shading here:

\begin{center}
        \begin{tikzpicture}[thick,scale=.45,every node/.style={scale=0.625}]
            \begin{scope}
                \draw[] (0,0)--(26,0)--(26,3)--(0,3)--(0,0){}; %pattern=north west lines
                \draw (0,0)--(0,3)--(6,12)--(14,12)--(4,0) {};
                \draw [red,  fill=black, fill opacity=0.15] (26,0)--(26,3)--(14,12)--(6,12)--(18,0) {};
                \draw [] (26,0)--(26,3)--(14,12)--(6,12)--(18,0) {}; %pattern=north east lines
                \draw[] (1,-3.5) rectangle (25,-2){}; %pattern = dots
                \draw(3.5,0)--(3.5,3){};
                \draw(19,0)--(19,3){};
                \draw(12,6)--(16,10.5){};
                \draw(14,4)--(18,9){};
                \draw(16,2)--(20,7.5){};
                \draw(18,0)--(22,6){};
                %\draw(22,0)--(24,4.5){};
                \draw(5,-3.5)--(5,-2){};
                \draw(21,-3.5)--(21,-2){};
                \draw(1.75,1.5) node [fill=white,draw,rounded corners]{$13$};
                \draw(4.25,1.5) node [fill=white,draw,rounded corners]{$123$};
                \draw(10,1.5) node [fill=white,draw,rounded corners]{$12$};
                \draw(4.5,6) node [fill=white,draw,rounded corners]{$3$};
                \draw(10,10) node [fill=white,draw,rounded corners]{$345$};
                \draw(13,9) node [fill=white,draw,rounded corners]{$45$};
                \draw(15,7.5) node [fill=white,draw,rounded corners]{$456$};
                \draw(17,5.5) node [fill=white,draw,rounded corners]{$46$};
                \draw(19,4) node [fill=white,draw,rounded corners]{$467$};
                \draw(22.75,3.75) node [fill=white,draw,rounded corners]{$67$};
                \draw(15.25,2.45) node [fill=white,draw,rounded corners]{$1246$};
                \draw(17.75,2) node [fill=white,draw,rounded corners]{$12467$};
                \draw(17.75,.55) node [fill=white,draw,rounded corners]{$1267$};
                \draw(19.75,2.45) node [fill=white,draw,rounded corners]{$2467$};
                \draw(22.75,1.5) node [fill=white,draw,rounded corners]{$267$};
                \draw(13,-2.75) node [fill=white,draw,rounded corners]{$8$};
                \draw(3,-2.75) node [fill=white,draw,rounded corners]{$138$};
                \draw(23,-2.75) node [fill=white,draw,rounded corners]{$2678$};
                \draw[dotted] (3,-2)-- (1.75,0);
                \draw[dotted] (23,-2)-- (24,0);
            \end{scope}
        \end{tikzpicture}
\end{center}

Next, as 
$U_4 \cup U_5 \cup U_6 \cup U_7$ 
%$\bigcup^7_{i=4}U_i$ 
is convex (in every closed-convex realization of $\mathcal{D}$), 
we can introduce a new neuron $B$ for which $U_B= U_4 \cup U_5 \cup U_6 \cup U_7$. %=\bigcup^7_{i=4}U_i$. 
More precisely, let $\widetilde{\mathcal{D}}$ be the code obtained from $\mathcal{D}$ by adding neuron $B$ to every codeword that contains at least one neuron from the set $\{4,5,6,7\}$.  
It follows that $\mathcal{D}$ and $\widetilde{\mathcal{D}}$ are equivalent (from the point of view of analyzing closed-convexity). The following restricted code
is (up to relabeling neurons) the code $C10$:
\begin{align*}
\widetilde{\mathcal D}|_{1238B} ~=~ \{ \mathbf{123}, \mathbf{12B}, \mathbf{138}, \mathbf{28B}, \mathbf{3B}, 12,13,2B,3,8,B, \emptyset \}~.
\end{align*}
Results in this section (and also those in the next section) show that $C10$ is non-closed-convex, and thus so is $\widetilde{\mathcal{D}}$, and hence $\mathcal{D}$ as well.
\end{example}

\begin{remark}
We have already used rigid structures to show that the code $C6$ is non-closed-convex
(Proposition~\ref{prop:examples-rigid}). 
Another way to see this uses the ``new neuron'' idea in the previous example.  Namely, as $(1235, \cup) $ is a rigid structure, we let $R$ denote a new neuron with $U_R=U_1 \cup U_2 \cup U_3 \cup U_5$.  Then, one can check that this new code has (at $R$) a ``local obstruction'' to convexity (which precludes  closed-convexity), as in~\cite{what-makes,no-go}.
\end{remark}

% SECTION
\section{Precluding closed-convexity using RF relationships} \label{sec:criterion-RF}
In this section, we introduce a second criterion for precluding closed-convexity, which, unlike 
the one in the previous section (Theorem~\ref{thm:rigid}), 
can be checked directly from a neural code or its neural ideal (Theorem~\ref{thm:criterion-non-closed-convex-via-RF}).  
The conditions in our criterion, which are listed in Table~\ref{tab:criterion}, capture the RF relationships that lead to non-closed-convexity in some codes in the literature.
Accordingly, our proof unifies arguments made in~\cite{goldrup2020classification}. %cruz2019open
Additionally, we obtain a criterion that can be checked from the canonical form (Corollary~\ref{cor:criterion-CF}) and therefore is the first algebraic signature for non-closed-convexity (besides signatures of local obstructions~\cite{curto2018algebraic}).

%We show that this criterion also can handle the codes $C6, C10, C15$, $\CruzCode$ {\color{red} clarify}.

%We use the notation $U_{\sigma}:=\cap_{i \in \sigma} U_i$.

\begin{table}[ht]
\begin{tabular}{lcc}
\hline
Property of neural code                                    & RF relationship                    & Neural ideal                            \\
\hline
%%%%
Some codeword contains $\{i,j,k\}$                           & $U_{ijk} \neq \emptyset$            & $x_i x_j x_k \notin J_{\code}$          \\
Some codeword contains $\{i,j,m\}$                           & $U_{ijm} \neq \emptyset$            & $x_i x_j x_m \notin J_{\code}$          \\
Some codeword contains $\{k,\ell, m\}$                       & $U_{k\ell m} \neq \emptyset$        & $x_k x_{\ell} x_m \notin J_{\code}$     \\
%%%%
No codeword contains $\{i,j,k, \ell \}$                           & $U_{ijk \ell } = \emptyset$              & $x_i x_j x_k x_{\ell} \in J_{\code}$      \\
No codeword contains $\{i,j,k,m\}$                           & $U_{ijkm} = \emptyset$              & $x_i x_j x_k x_m \in J_{\code}$      \\
%%%%
Each codeword containing $k$ also contains $j$ or $\ell$ & $U_k \subseteq (U_j \cup U_{\ell})$ & $x_k (x_j+1)(x_{\ell}+1) \in J_{\code}$ \\
Each codeword containing $j$ also contains $i$ or $k$    & $U_j \subseteq (U_i \cup U_{k})$    & $x_j (x_i+1)(x_{k}+1) \in J_{\code}$   \\
\hline
\end{tabular}
\caption{Properties of a neural code $\code$, the equivalent receptive field relationships, and the equivalent conditions on the neural ideal $J_{\code}$.   \label{tab:criterion}}
\end{table}

\begin{theorem}[Criterion for precluding closed-convexity] \label{thm:criterion-non-closed-convex-via-RF}
Let $\code$ be a code on $n$ neurons. 
Let $i,j,k,l,m \in [n]$.  If $\code$ satisfies the properties in the first column of Table~\ref{tab:criterion} or, equivalently, the neural ideal $J_{\code}$ satisfies the properties in the third column, 
then $\code$
 is not closed-convex.
\end{theorem}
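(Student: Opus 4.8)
The plan is to derive this from the rigid-structure criterion (Theorem~\ref{thm:rigid}) by producing two rigid structures of $\code$ whose \distinguished subcode has a disconnected codeword-containment graph. The preliminary step is to record the equivalence of the three columns of Table~\ref{tab:criterion}. By Lemma~\ref{lem:receptivefields} and the definitions, for all $\sigma,\tau\subseteq[n]$ we have $x_\sigma\in J_{\code}$ iff $U_\sigma=\emptyset$ iff no codeword of $\code$ contains $\sigma$, and $x_\sigma\prod_{h\in\tau}(1+x_h)\in J_{\code}$ iff $U_\sigma\subseteq\bigcup_{h\in\tau}U_h$ iff every codeword of $\code$ containing $\sigma$ also meets $\tau$. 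Reading these off for the relevant $\sigma,\tau$ shows that the column-1 properties of $\code$, the column-3 conditions on $J_{\code}$, and the validity in every realization of $\code$ of the column-2 receptive-field relations are all equivalent; so I may assume the receptive-field relations and prove that $\code$ is not closed-convex.

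First I would show that $(\{j,k\},\cup)$ is a rigid structure of $\code$. Let $\fullreal$ be any closed-convex realization of $\code$, and apply Lemma~\ref{lem:hinge} with $V_1=U_j$, $V_2=U_k$, $W_1=U_i$, $W_2=U_\ell$ (all closed, with $V_1,V_2$ convex). Hypothesis~(i), $U_j\cap U_k\neq\emptyset$, follows from $U_{ijk}\neq\emptyset$; hypothesis~(ii), $U_j\subseteq U_i\cup U_k$ together with $U_k\subseteq U_j\cup U_\ell$, is exactly two of our relations; and hypothesis~(iii), $U_i\cap U_j\cap U_k\cap U_\ell=\emptyset$, is the relation $U_{ijk\ell}=\emptyset$. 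So Lemma~\ref{lem:hinge} gives that $U_j\cup U_k$ is convex; as this holds in every closed-convex realization, $(\{j,k\},\cup)$ is a rigid structure (Definition~\ref{def:rigid}(2), with $\sigma'=[n]$). The second rigid structure is $(\{m\},\cap)$, which is automatically rigid (Definition~\ref{def:rigid}(1)).

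By Definition~\ref{def:subcode-from-r-and-c}, the \distinguished subcode arising from $(\{j,k\},\cup)$ and $(\{m\},\cap)$ is
\[
\code'~=~\{\,\sigma\in\code \mid (j\in\sigma \text{ or } k\in\sigma)\ \text{and}\ m\in\sigma\,\}.
\]
I would prove that its codeword-containment graph is disconnected by splitting $\code'$ into the codewords that contain $k$ (type~B) and those that do not (type~A). A type~A codeword $\sigma$ must contain $j$ (since $\sigma\in\code'$ and $k\notin\sigma$) and hence also $i$ (by $U_j\subseteq U_i\cup U_k$), so $\{i,j,m\}\subseteq\sigma$. There is then no edge between a type~A codeword $\sigma$ and a type~B codeword $\sigma'$: the containment $\sigma'\subsetneq\sigma$ is impossible because $k\in\sigma'\setminus\sigma$, while $\sigma\subsetneq\sigma'$ would force $\{i,j,k,m\}\subseteq\sigma'$, hence $U_{ijkm}\neq\emptyset$, contradicting $U_{ijkm}=\emptyset$. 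Finally, both types occur: a codeword of $\code$ containing $\{i,j,m\}$ exists (since $U_{ijm}\neq\emptyset$) and cannot contain $k$ (again because $U_{ijkm}=\emptyset$), so it is type~A; and a codeword of $\code$ containing $\{k,\ell,m\}$ exists (since $U_{k\ell m}\neq\emptyset$) and is type~B. Therefore the codeword-containment graph of $\code'$ has at least two connected components, and Theorem~\ref{thm:rigid} gives that $\code$ is not closed-convex.

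The column-equivalence bookkeeping is routine. The only place that needs care is matching the asymmetric roles of $i,j,k,\ell$ to the hypotheses of Lemma~\ref{lem:hinge} (taking $V_1=U_j$, $V_2=U_k$ rather than, say, $V_1=U_i$), and then checking that the type~A/type~B bipartition of $\code'$ admits no crossing edge --- which is exactly where the relations $U_{ijkm}=\emptyset$ and $U_j\subseteq U_i\cup U_k$ are used. I do not anticipate a genuine obstacle: the argument is a clean instance of the ``two rigid structures that cannot be fitted together'' mechanism of Section~\ref{sec:rigid}, with the rigidity of $(\{j,k\},\cup)$ coming directly from Lemma~\ref{lem:hinge} rather than from Proposition~\ref{prop:path-rigid}.
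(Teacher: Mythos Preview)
Your proof is correct, and it takes a genuinely different route from the paper's. The paper gives a direct geometric argument: it assumes a closed-convex realization, picks points $y_{ijm}\in U_{ijm}$ and $y_{k\ell m}\in U_{k\ell m}$, lets $y_{ijk}\in U_{ijk}$ be a point achieving minimum distance to the segment $L_m=\overline{y_{ijm}y_{k\ell m}}$, and then argues along the sides of the resulting triangle to produce a point $y'_{ijk}\in U_{ijk}$ strictly closer to $L_m$, a contradiction. Your approach instead reduces the theorem to the rigid-structure machinery: you observe that the RF relations $U_j\subseteq U_i\cup U_k$, $U_k\subseteq U_j\cup U_\ell$, $U_{ijk}\neq\emptyset$, and $U_{ijk\ell}=\emptyset$ are exactly the hypotheses of Lemma~\ref{lem:hinge} for $V_1=U_j$, $V_2=U_k$, $W_1=U_i$, $W_2=U_\ell$, so $(\{j,k\},\cup)$ is rigid; then you pair it with $(\{m\},\cap)$ and use $U_{ijm}\neq\emptyset$, $U_{k\ell m}\neq\emptyset$, and $U_{ijkm}=\emptyset$ to disconnect the distinguished subcode. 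This is a nice observation: it shows that Theorem~\ref{thm:criterion-non-closed-convex-via-RF} is in fact a corollary of Theorem~\ref{thm:rigid}, with the rigidity supplied by Lemma~\ref{lem:hinge} directly rather than through Proposition~\ref{prop:path-rigid}. The paper treats the two criteria as parallel (noting in Remark~\ref{rem:cruz-criterion-RF} only that both proofs use minimum-distance arguments), whereas your argument makes the logical dependence explicit; conversely, the paper's self-contained proof does not require the reader to pass through the rigid-structure framework.
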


\begin{proof}
The equivalence of columns two and three of Table~\ref{tab:criterion} 
is due to Lemma~\ref{lem:receptivefields}, and the equivalence of columns one and two is straightforward.

Assume for contradiction that $\{U_1,U_2,\dots, U_n\}$ is a closed-convex realization of $\code$.  By intersecting each $U_i$ with a sufficiently large closed ball (the same ball for all $i$), we may assume that each $U_i$ is compact.

The first four RF relationships in Table~\ref{tab:criterion} imply that, for the three sets $U_{ij}, U_k, U_m$, all pairwise intersections are nonempty but the triplewise intersection 
$U_{ij} \cap U_k \cap U_m = U_{ijkm}$
is empty.  We depict this schematically here: 

%\begin{center}
%    \begin{tikzpicture}[scale=1.75]
%	\tikzstyle{vertex}=[circle,fill=black, inner sep = 1pt]
%	\coordinate (A) at (-.5,-.5);
%	\coordinate (B) at (.5,-.5);
%	\coordinate (C) at (0,-1);
%	\draw[blue] (-0.35,0.3) ellipse (30pt and 8pt);
%%	\draw[red] (0.7,0) circle (5pt);
%%	\draw[red] (-0.7,0) ellipse (5pt);
%%	\draw[red] (0,-0.7) circle (6pt);
%	\draw[rotate=45, red] (-0.25, -0.2) circle (30pt and 8pt);
%	\draw[rotate=-45] (-0.25, -0.7) circle (30pt and 8pt);
%	\node[label=${\color{blue} U_{k}}$] at (-0.3, 0) {};
%	\node[label=${\color{red} U_{m}}$] at (0,-0.6) {};
%	\node[label=$U_{ij}$] at (-0.7,-0.6) {};
%%	\node[label=$U_{ijk}$] at (0.7,-0.2) {};
%%	\node[label=$U_{klm}$] at (-0.7,-0.2) {};
%%	\node[label=$U_{ijm}$] at (0,-0.9) {};
%\end{tikzpicture}
%\end{center}

\begin{center}
    \begin{tikzpicture}[scale=1.3]
    \draw[blue,  fill=blue, fill opacity=0.3] 
	(0,0) -- (0.5,0) -- (1.5,-1) -- (1,-1) -- (0,0)  ;
    \draw[black,  fill=black, fill opacity=0.2] 
	(0,0) 
	-- (0.5,0) 
	-- (-0.5,-1) -- (-1,-1) -- (0,0) ;
    \draw[red,  fill=red, fill opacity=0.3] 
	 (-1,-1) -- (1.5,-1) -- (1.2,-0.7) -- (-0.7, -0.7) -- (-1, -1) ;
	\node[label=${\color{blue} U_{k}}$] at (1, -0.5) {};
	\node[label=${\color{red} U_{m}}$] at (0.2,-1.6) {};
	\node[label=$U_{ij}$] at (-0.6, -0.5) {};
    \end{tikzpicture}
\end{center}

We can therefore pick distinct points $y_{ijm} \in U_{ijm}$ and $y_{k \ell m} \in U_{k \ell m}$ (recall from Table~\ref{tab:criterion} that $U_{k \ell m} \neq \emptyset$). 
Let $L_m$ be the line segment joining the two points.  We have $L_m \subseteq U_m$, because both endpoints are in the convex set $U_m $.  Next, $L_m$ and $U_{ijk}$ are compact and disjoint sets (recall that $U_{ijkm} = \emptyset$), so there exists a point $y_{ijk} \in U_{ijk}$ that achieves the minimum distance from $U_{ijk}$ to $L_m$ (and this distance is positive).  

Letting $L_{ij}$ be the line segment from $y_{ijm}$ to $y_{ijk}$ (so, $L_{ij} \subseteq U_{ij}$ by convexity), and $L_{k}$ the line segment from $y_{ijk}$ to $y_{k \ell m}$ (so, $L_k \subseteq U_k$ by convexity), we have the following triangle:

\begin{center}
    \begin{tikzpicture}[scale=1.3]
	\tikzstyle{vertex}=[circle,fill=black, inner sep = 1pt]
	\coordinate (A) at (-.5,-.5);
	\coordinate (B) at (.5,-.5);
	\coordinate (C) at (0,-1);
	\draw[black] (-1,-1)--(0,0);
	\draw[blue] (0,0)--(1,-1);
	\draw[red] (1,-1)--(-1,-1) ;
%	\draw[black] (A)--(B);
	\node[vertex, label=$y_{ijk}$] (nodeijk) at (0,0) {};
	\node[vertex, label=left:$y_{ijm}$] (node145) at (-1,-1) {};
	\node[vertex, label=right:$y_{klm}$] (node345) at (1,-1) {};
%	\node[vertex, label=left:$y_{ij}$] (node125) at (A) {};
%	\node[vertex, label=right:$y_{jkl}$] (node234) at (B) {};
	\node[label=left:$L_{ij}$] (node125) at (-.25,-.25) {};
	\node[label=right:${\color{blue} L_k}$] (node234) at (.25,-.25) {};
%	\node[vertex, label=$y'_{ijk}$] (node234) at (0,-.5) {};
	\node[label=below:${\color{red} L_m}$] at (0,-1){};
%	\node[label=below:$M$] at (0.25,-0.5){};
\end{tikzpicture}
\end{center}

Recall again that $U_{ijkm} = \emptyset$, so the line segment $L_{ij} \subseteq U_{ij}$ must exit $U_m$ before entering $U_k$.  Hence, there exists a point $y_{ij}$ in the relative interior of the line segment $L_{ij}$ such that $y_{ij} \in U_{ij} \smallsetminus (U_m \cup U_k)$.

Next,  $L_k \subseteq U_k $ and 
(from Table~\ref{tab:criterion}) 
$U_k \subseteq (U_j \cup U_{\ell})$.  Also, the endpoints of $L_k$ are $y_{ijk} \in U_j$ and $y_{k \ell m} \in U_{\ell}$.  So, $L_k$ is a connected set covered by the nonempty closed sets 
$L_k \cap U_j$ and $L_k \cap U_{\ell}$.  
We conclude that there exists a point $y_{j k \ell}$ on $L_k$ that is in $U_{jk \ell}$.  Additionally,
$y_{j k \ell} \notin U_i$ (and in particular $y_{j k \ell} \neq y_{ijk}$) because $U_{ijk\ell} = \emptyset$ (see Table~\ref{tab:criterion}).

We conclude that the line segment from $y_{ij}$ to $y_{jk \ell}$, which we denote by $L_j$, is contained in $U_j$ and (except for the endpoints) lies in the interior of the triangle.
From Table~\ref{tab:criterion}, we have $U_j \subseteq (U_i \cup U_k)$.  Also, recall that $y_{ij} \in (U_i \smallsetminus U_k)$ and $y_{jk \ell} \in (U_k \smallsetminus U_i)$.  
Thus, there is a point $y'_{ijk}$ in the relative interior of $L_j$ (and thus in the interior of the triangle) and also in $U_{ijk}$.  We depict this here:

\begin{center}
    \begin{tikzpicture}[scale=2]
	\tikzstyle{vertex}=[circle,fill=black, inner sep = 1pt]
	\coordinate (A) at (-.5,-.5);
	\coordinate (B) at (.5,-.5);
	\coordinate (C) at (0,-1);
	\draw[black] (-1,-1)--(0,0);
	\draw[blue] (0,0)--(1,-1);
	\draw[red] (1,-1)--(-1,-1) ;
	\draw[orange] (A)--(B);
	\node[vertex, label=$y_{ijk}$] (nodeijk) at (0,0) {};
	\node[vertex, label=left:$y_{ijm}$] (node145) at (-1,-1) {};
	\node[vertex, label=right:$y_{klm}$] (node345) at (1,-1) {};
	\node[vertex, label=left:$y_{ij}$] (node125) at (A) {};
	\node[vertex, label=right:$y_{jkl}$] (node234) at (B) {};
%	\node[label=left:$L_{ij}$] (node125) at (-.25,-.25) {};
%	\node[label=right:${\color{blue} L_k}$] (node234) at (.25,-.25) {};
	\node[vertex, label=$y'_{ijk}$] (node234) at (0,-.5) {};
	\node[label=below:${\color{red} L_m}$] at (0,-1){};
	\node[label=below:${\color{orange} L_{j}}$] at (0.25,-0.4){};
\end{tikzpicture}
\end{center}

Thus, $y'_{ijk}$ is a point in $U_{ijk}$ that is closer to $L_m$ than $y_{ijk}$ is, which is a contradiction.
\end{proof}

%\begin{remark} \label{rem:proof-cruz-goldrup-ph}
%The proof of Theorem~\ref{thm:criterion-non-closed-convex-via-RF}
%closely follows the related proofs of Cruz {\em et al.} for $\CruzCode$~\cite[Lemma 2.9]{cruz2019open}
%and Goldrup and Phillipson for $C6,C10,C15$~\cite[Theorem 4.1]{goldrup2020classification}.  
%\end{remark}

\begin{table}[ht]
\begin{tabular}{lc}
\hline
Code  & $(i,j,k,\ell, m)$ \\
\hline
$C6 =~ \{ {\bf 123}, {\bf 125}, {\bf 145}, {\bf 234},
            12, 15, 23,
            4, 
            \emptyset\}$ 
            & $(3,2,1,5,4)$     \\
$C10 = \{ {\bf 134}, {\bf 135},  {\bf 234}, {\bf 245},
            {\bf 12}, 13, 24, 34,
            1,2,5,
            \emptyset \}$
            & $(1,3,4,2,5)$     \\
$C15 = \{ {\bf 123}, {\bf 125},
            {\bf 145} ,{\bf  234}, {\bf 345},
            12,15,23,34,45, \emptyset \}$ 
            & $(1,2,3,4,5)$     \\
\hline
\end{tabular}
\caption{Three codes and indices $(i,j, k, \ell, m)$ for which the codes satisfy the hypotheses of Theorem~\ref{thm:criterion-non-closed-convex-via-RF}. \label{tab:ijklm}}
\end{table}

\begin{example} \label{ex:3-codes-via-RF-criterion}
Theorem~\ref{thm:criterion-non-closed-convex-via-RF} applies to the codes $C6$, $C10$, and $C15$ from Example~\ref{ex:goldrup-phillipson}, where the values of $(i,j, k, \ell, m)$ are as listed in Table~\ref{tab:ijklm}.  Hence, these codes are non-closed-convex, and our result unifies the proofs of non-closed-convexity due to Goldrup and Phillipson, who first analyzed these codes~\cite{goldrup2020classification}. See also another such proof for $C15$, which uses ``order-forcing'', in~\cite[Example 2.13]{order-forcing}.
\end{example}

\begin{remark}[Minimum-distance arguments] \label{rem:cruz-criterion-RF}
Like the earlier proof of Lemma~\ref{lem:hinge}, the proof of Theorem~\ref{thm:criterion-non-closed-convex-via-RF} relies on minimum-distance arguments.  Indeed, our proof is similar to that of Cruz {\em et al.}~\cite[Lemma 2.9]{cruz2019open}
for the code $\CruzCode$
 from Example~\ref{ex:goldrup-phillipson}, but
Theorem~\ref{thm:criterion-non-closed-convex-via-RF} does not apply %(it is straightforward to check) 
to that code. 
It may be interesting in the future to prove a result that is similar to Theorem~\ref{thm:criterion-non-closed-convex-via-RF} and does apply to $\CruzCode$.
\end{remark}

\begin{remark}[Adding codewords that preserve non-closed-convexity] \label{rem:add-codewords}
For a non-closed-convex code $\code$, Theorem~\ref{thm:criterion-non-closed-convex-via-RF} can be used to generate a list of non-maximal codewords that, when added to $\code$, results in a code that is still non-closed-convex.  For instance, any subset of the following codewords can be added to $C6$ without making the code become closed-convex: 
$3, 34, 45, 5$.
\end{remark}

\begin{remark}[Replacing $i,j,k,\ell,m$ by sets]
Theorem~\ref{thm:criterion-non-closed-convex-via-RF} allows the following generalization: the properties of neural codes listed in Table~\ref{tab:criterion} involving individual neurons $i,j,k,\ell,m$ can be replaced by nonempty sets of neurons $I,J,K,L,M$ (e.g., ``Some codeword contains $\{i,j,k\}$'' becomes ``Some codeword contains $I\cup J \cup K$'').  
The proof of Theorem~\ref{thm:criterion-non-closed-convex-via-RF} readily accommodates this extension.
An alternate proof is through Jeffs's theory of morphisms of codes~\cite{jeffs2020morphisms}. 
Two key ingredients for this proof are as follows: codes that have a surjective morphism to a non-closed-convex code are non-closed-convex~\cite[Remark~1.8]{jeffs2020morphisms}), and the properties of neural codes in Table~\ref{tab:criterion} can be restated in terms of what Jeffs calls ``trunks''.
\end{remark}

We expect that we can not remove any of the hypotheses of  Theorem~\ref{thm:criterion-non-closed-convex-via-RF} (the seven properties listed in Table~\ref{tab:criterion}).  The following example shows what can happen with one missing hypothesis.

\begin{example} \label{ex:remove-1-hypothesis-from-RF-criterion}
Consider the following code:
$\code = \{ {\bf 123}, {\bf 124}, {\bf 235}, {\bf 45}, 12,14,23,35,4,5,\emptyset \}$.  
This code is closed-convex 
(this follows from results in~\cite{cruz2019open,goldrup2020classification}), but was erroneously asserted to be non-closed-convex in a preliminary version of~\cite{non-monotonicity}.  It is easy to check that, when $(i,j,k,\ell,m)=(1,2,3,5,4)$, the code $\code$ satisfies all properties listed in Table~\ref{tab:criterion} except the one in the third row.
\end{example}

Theorem~\ref{thm:criterion-non-closed-convex-via-RF} provides a sufficient condition for being non-closed-convex.  As noted above, the code $\CruzCode$ shows that this condition is not necessary, even for codes without local obstructions. Another such code is in the following result.

\begin{proposition}\label{prop:ex:c8-does-not-stsf}
The code $\mathcal C_8 = 
    \{
     \mathbf{12378}, 
    \mathbf{1457}, \mathbf{2456}, \mathbf{3468},
    278, % new codeword here
    17, 38, 45, 46, 
    2, 
    \emptyset
\} $ from Example~\ref{ex:goldrup-phillipson} 
%Despite being 
is non-closed-convex but fails to satisfy the hypotheses of Theorem~\ref{thm:criterion-non-closed-convex-via-RF}. %{\color{blue} To do: Add a proof or a computer check or say it is straightforward to check}.
\end{proposition}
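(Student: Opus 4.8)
The first assertion of the proposition, that $\mathcal{C}_8$ is non-closed-convex, is already known: it is recorded in Example~\ref{ex:non-monotone-code} (and is \cite[Theorem~3.2]{non-monotonicity}). So the substance is the second assertion, namely that no choice of indices $i,j,k,\ell,m \in [8]$ makes $\mathcal{C}_8$ satisfy all seven properties in the first column of Table~\ref{tab:criterion}. The plan is to prove this by a structured finite case analysis: a few normalizing observations cut the a priori space of ordered $5$-tuples down to a short list of candidates, each of which is then eliminated by inspecting the four maximal codewords $12378$, $1457$, $2456$, $3468$. (The claim can also be checked mechanically, e.g.\ with the software of~\cite{neural-ideal-sage}, but the structured argument is brief and explanatory.)

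\emph{Step 1 (normalizations).} First I would reduce to the case in which $i,j,k,\ell,m$ are pairwise distinct. The requirements that some codeword contain each of $\{i,j,k\}$, $\{i,j,m\}$, $\{k,\ell,m\}$ while no codeword contains $\{i,j,k,\ell\}$ immediately rule out $\ell \in \{i,j,k\}$, and the remaining coincidences (a repetition inside $\{i,j,k\}$ or inside $\{i,j,m\}$, or $\ell=m$) are dispatched by short ad hoc checks of the same flavour; so all five indices differ. Next I would note that $j \neq 2$ and $k \neq 2$: the singleton codeword $\{2\}$ of $\mathcal{C}_8$ contains neuron $2$ and no other neuron, so by Lemma~\ref{lem:receptivefields} we cannot have $U_2 \subseteq U_a \cup U_b$ for any $a,b \neq 2$, which contradicts the relation $U_j \subseteq U_i \cup U_k$ (respectively $U_k \subseteq U_j \cup U_\ell$) as soon as $j=2$ (respectively $k=2$).

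\emph{Step 2 (case analysis on $j$).} The key structural fact about $\mathcal{C}_8$ is that a subset of $[8]$ of size at most $4$ lies in some codeword precisely when it lies in one of the four maximal codewords (the only non-maximal codeword of size $\geq 3$ is $278 \subseteq 12378$). For each of the seven remaining values of $j$, I would use $U_j \subseteq U_i \cup U_k$ --- equivalently, every codeword containing $j$ also contains $i$ or $k$ --- to pin $\{i,k\}$ down to a short list: for instance $j=4$ forces $\{i,k\}=\{5,6\}$; for $j \in \{1,3,5,6\}$ a single neuron ($7,8,4,4$ respectively) is forced into $\{i,k\}$; and for $j=7$, respectively $j=8$, one gets $\{i,k\} \in \bigl\{\{1,2\},\{1,8\}\bigr\}$, respectively $\{i,k\} \in \bigl\{\{2,3\},\{3,7\}\bigr\}$. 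For each resulting ordered triple $(i,j,k)$ I would then apply $U_k \subseteq U_j \cup U_\ell$ to restrict $\ell$, and use the conditions that some codeword contain $\{k,\ell,m\}$ and some contain $\{i,j,m\}$ to restrict $m$. Every surviving tuple is then killed in a line or two: either $\{i,j,k\}$ lies in no codeword, or $\{i,j,m\}$ lies in no codeword, or $\{i,j,k,\ell\}$ is a subset of (indeed, often equal to) a maximal codeword, contradicting the requirement that no codeword contain it. Running through all seven values of $j$ exhausts the candidates.

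\emph{Step 3 (conclusion).} Since every candidate tuple fails, $\mathcal{C}_8$ does not satisfy the hypotheses of Theorem~\ref{thm:criterion-non-closed-convex-via-RF}, which together with Example~\ref{ex:non-monotone-code} proves the proposition. The main obstacle here is organizational rather than conceptual: the verification is moderately long but entirely mechanical, and the only place demanding care is Step~1, where the degenerate-index cases must be handled cleanly so that Step~2 may safely assume distinctness and exploit the singleton codeword $\{2\}$.
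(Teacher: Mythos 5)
Your proposal is correct but takes a genuinely different route from the paper's. The paper observes that rows 1, 2, and 5 of Table~\ref{tab:criterion} together force $\{i,j\}$ to lie in the intersection of two \emph{distinct} maximal codewords of $\mathcal{C}_8$; since the four maximal codewords of $\mathcal{C}_8$ have only a handful of pairwise intersections of size $\ge 2$, and since $\mathcal{C}_8$ is invariant under swapping $\{1,5,7\}$ with $\{3,6,8\}$, this cuts the analysis down to just two candidate pairs $\{i,j\} \in \bigl\{\{4,5\},\{3,8\}\bigr\}$, each of which is then eliminated using the minimal RF relationships read off the canonical form. Your approach instead cases on the single index $j$ (seven values once $j=2$ is discarded), uses the row-7 relation $U_j \subseteq U_i \cup U_k$ to pin down $\{i,k\}$, and proceeds from there; this avoids both the symmetry reduction and the canonical-form computation, at the cost of roughly three to four times as many cases. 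Both arguments share the $\{2\}$-codeword observation (you use it to exclude $j=2$ and $k=2$; the paper only needs $k\neq 2$) and both ultimately reduce to inspecting the four maximal codewords. One thing to be careful about: your Step~1 claim that the degenerate index coincidences are ``dispatched by short ad hoc checks of the same flavour'' is doing real work you haven't exhibited — for example, your argument that $j\neq 2$ via $U_j \subseteq U_i \cup U_k$ silently assumes $i\neq j$, and the case $i=j=2$ makes that relation trivially true, so it must be killed by a separate argument (it can be, via rows~4 and~6, but it's not ``of the same flavour''). The paper's proof sidesteps this entirely by implicitly treating $i,j,k,\ell,m$ as distinct, which is itself a gap; but since you explicitly raised the issue, you should actually close it rather than defer it.
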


\begin{proof}
As noted earlier, non-closed-convexity was proven in~\cite{non-monotonicity}.

It will be useful in our proof to examine the code's canonical form, which is as follows:  
\begin{align*}
&\{ x_1(x_7 + 1), ~ x_3(x_8 + 1),~ x_3x_5,~ (x_4 + 1)x_5,~ x_5x_8,~ x_1x_6,~\\
&(x_4 + 1)x_6,~ x_6x_7,~ x_4(x_5 + 1)(x_6 + 1),~ (x_1 + 1)(x_2 + 1)x_7,~ \\
&(x_1 + 1)x_7(x_8 + 1),~ x_2x_7(x_8 + 1),~ x_2(x_7 + 1)x_8,~ (x_2
+1)(x_3 + 1)x_8,~\\
&(x_3 + 1)(x_7 + 1)x_8,~ (x_2 + 1)x_7x_8,~ x_1x_4(x_5 + 1),~ (x_1 + 1)x_4x_7,~\\
&x_4(x_5 + 1)x_7,~ (x_1 + 1)x_5x_7,~ x_2x_3(x_5 + 1),~ x_2x_4(x_6 + 1),~ x_2x_4x_7,~\\
&x_2x_5(x_6 + 1),~ x_2x_5x_7,~ x_2(x_5 + 1)x_6,~ (x_2 + 1)x_5x_6,~ x_3x_4(x_6 + 1),~\\
&x_2x_4x_8,~ (x_3 + 1)x_4x_8,~ x_4(x_6 + 1)x_8,~ x_4x_7x_8,~ x_2x_6x_8,~ (x_3 + 1)x_6x_8,~\\
&x_1x_2(x_3 + 1),~ x_1x_2x_4,~ x_1x_2x_5,~ x_1x_2(x_8 + 1),~ x_1(x_2 + 1)x_3,~ x_1x_3x_4,~\\
&(x_1 + 1)x_2x_3,~ x_2x_3x_4,~ x_2x_3x_6,~ x_2x_3(x_7 + 1),~ (x_1 + 1)x_3x_7,~\\
&(x_2 + 1)x_3x_7,~ x_3x_4x_7,~ x_1(x_2 + 1)x_8,~ x_1(x_3 + 1)x_8,~ x_1x_4x_8\}
\end{align*}
Recall from Lemma~\ref{lem:receptivefields} that a pseudomonomial in the canonical form, such as $x_1(x_7 + 1)$, indicates a minimal RF relationship, such as $U_1 \subseteq U_7$.

We first note that $\code_8$ is symmetric under switching the neurons $1,5,7$ with, respectively, $3,6,8$.  We must show that no 
neurons play the roles of $i,j,k,l,$ and $m$
in Theorem~\ref{thm:criterion-non-closed-convex-via-RF}.  We focus on the conditions in rows 1, 2, and 5 in Table~\ref{tab:ijklm}, which are together equivalent to the existence of maximal codewords $\sigma_1$ and $\sigma_2$ that 
(i)
contain (respectively) $ijk$ and $ijm$ and 
(ii) are such that $\sigma_1 \neq \sigma_2$ 
(so that no codeword contains $ijkm$).  
As $\{i,j\} \subseteq (\sigma_1 \cap \sigma_2)$, we restrict our attention to pairs of maximal codewords of $\code_8$ whose intersection has size at least 2.  Up to the symmetry mentioned above, there are only two options:
\begin{itemize}
    \item {\bf Case A}: $\{i,j\} = \{4,5\} = 1457 \cap 2456 =\sigma_1 \cap \sigma_2 $ 
    (in which case, $m \in \{2,6\}$ and $k \in \{1,7\}$, or vice-versa), or
    \item {\bf Case B}: $\{i,j\} = \{3,8\} =12378 \cap 3468 =\sigma_1 \cap \sigma_2 $
    (in which case, $m \in \{1,2,7\}$ and $k \in \{4,6\}$, or vice-versa).
\end{itemize}
Before considering the two cases, we first claim that $k \neq 2$.  Indeed, $2 \in \code_8$ and so there is no RF relationship with $k=2$ of the following form %$U_2=
required by row~6 of Table~\ref{tab:ijklm}:
\begin{align} \label{eq:row-6-of-tab-ijklm}
    U_k \subseteq (U_j \cup U_{\ell})~.
\end{align}

We begin with Case A.  We claim that $k\neq 6$ and $m \neq 6$.  Indeed, the condition in row 3 of Table~\ref{tab:ijklm} requires that some codeword contain $k \ell m$, and there is no maximal codeword containing $16$ or $67$.   Therefore, it must be that $k \in \{1,7\}$ and $m=2$.

We first consider $k=1$.  The only minimal RF relationship of the form $U_1 \subseteq \cup_{p \in \tau} U_p$ is $U_1 \subseteq U_7$
(this can be checked directly or from the canonical form),
so the required condition~\eqref{eq:row-6-of-tab-ijklm} 
(with $j \in \{4,5\}$) implies that $\ell = 7$.  Thus, $ijk \ell = \{1,4,5,7\}$, which is itself a maximal codeword. 
Thus, the condition in row~4 of Table~\ref{tab:ijklm} fails.

The remaining subcase is when $k=7$.  Only two minimal RF relationship are of the form 
$U_7 \subseteq \cup_{p \in \tau}U_p$, namely, $U_7 \subseteq U_1 \cup U_8$ and $U_7 \subseteq U_1 \cup U_2$.  So, as $j \in \{4,5\}$, there is no RF relationship with $k=7$ of the form in~\eqref{eq:row-6-of-tab-ijklm}.  So, this subcase also does not fulfill the hypotheses of the theorem.

We turn now to Case~B.  The possible values for $k$ are $1,4,6,7$.  For $k=1$, there is a single minimal RF relationship of the form $U_1 \subseteq \cup_{p \in \tau}U_p$, namely, $U_1 \subseteq U_7$.  So, we can again use~\eqref{eq:row-6-of-tab-ijklm} (and the assumption $\{i,j\} = \{3,8\}$) to see that $\ell=7$.  Thus, $ijkl = \{1,3,7,8\}$, which is 
contained in the maximal codeword $12378$ and so contradicts the row-4 condition.

For $k=4$, there is one minimal RF relationship of the form $U_4 \subseteq \cup_{p \in \tau}U_p$, namely, $U_4 \subseteq U_5 \cup U_6$.  This relationship does not involve $j \in \{3,8\}$ and so  condition~\eqref{eq:row-6-of-tab-ijklm} does not hold.

For $k=6$, 
only one minimal RF relationship has the form $U_6 \subseteq \cup_{p \in \tau}U_p$, namely, $U_6 \subseteq U_4$.  Hence, $\ell=4$ and so $ijkl = \{3,4,6,8\}$ is a maximal codeword, again violating 
the row-4 condition.

Finally, consider
$k=7$.  Only two RF relationships have the form $U_7 \subseteq \cup_{p \in \tau}U_p$, namely, 
$U_7 \subseteq U_1 \cup U_2$ and 
$U_7 \subseteq U_1 \cup U_8$.  It must therefore be that $j=8$ and $\ell=1$.  So, $ijk \ell = \{1,3,7,8\} \subseteq \{1,2,3,7,8\}$, again contradicting the row-4 condition.  Our proof is now complete.
\end{proof}

%\begin{proof}
%{\color{violet} Thanks for putting in this proof, Joe!  It looks like it handles well the criterion using the canonical form (Corollary~\ref{cor:criterion-CF}).  Can you modify the proof to handle the stronger criterion, Theorem~\ref{thm:criterion-non-closed-convex-via-RF}?}

%We examine this code to see if it fulfills the hypotheses of Theorem 5.1. 
%We first determine which neurons play the roles of $i,j,k,l,$ and $m$ in this code. We do this by looking for neurons $j$ and $k$. The only two possibilities for these neurons are $7$ or $8$. Assuming $j=8$ gives us that $i=1$, $k=7$, and $l=3$. As per the criterion, no codeword containing $\{1,j,k,l\}$ can be in the code, but, in this code, we have codeword, $12378$. The same happens when we assume the opposite, that $j=7$ and $k=8$. Thus, the code does not meet the hypotheses of the criterion.

%We begin by looking for neuron $k$ using the fact that, according to the criterion, $k$ will be such that $x_k(x_j+1)(x_l+1),x_j)(x_i+1)(x_k+1) \in CF$. Scanning over polynomials of that type, we find that either $k=7$ or $k=8$. The former gives us $(i,j,k,l) = (3,8,7,1)$, the latter $(i,j,k,l) = (1,8,7,3)$. In either case, we need $x_1x_3x_7x_8 \in J_C$ to satisfy our criterion. But, $x_1x_3x_7x_8 \in J_C$ if and only if no codeword contains $\{1,3,7,8\}$. Since $12378\in C$, $x_1x_3x_7x_8\not\in J_C$. Therefore, the code does not satisfy the hypotheses of the criterion.
%
%\end{proof}

\begin{corollary}[Precluding closed-convexity using the canonical form] \label{cor:criterion-CF}
Let $\code$ be a code on $n$ neurons. Let $i,j,k,l,m \in [n]$. Suppose that the canonical form of the neural ideal of $\code$ contains the following pseudo-monomials:
\begin{itemize}
    \item[(i)] $x_ix_k(x_j+1),$ $x_jx_m(x_i+1)$, and $x_k x_m(x_{\ell}+1)$;
    \item[(ii)] $x_{\sigma}$ and $x_{\tau}$, for some $\sigma \subseteq \{i,j,k,\ell\}$ and $\tau \subseteq \{i,j,k,m\}$; and
    \item[(iii)] 
    $x_k (x_j+1) (x_{\ell}+1)$ and 
    $ x_j(x_i+1)(x_k+1)$.
 \end{itemize}
%\begin{align*}
%\{  &x_ix_k(x_j+1),~x_jx_m(x_i+1),~x_kx_m(x_l+1),~ {\color{blue} x_mx_kx_i},~\\
%    & {\color{blue} x_mx_kx_j},~x_k(x_l+1)(x_j+1),~x_j(x_i+1)(x_k+1)
%    \}
%\end{align*}
Then $\code$ is not closed-convex.
\end{corollary}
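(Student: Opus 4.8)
The plan is to derive the corollary directly from Theorem~\ref{thm:criterion-non-closed-convex-via-RF}: I will show that hypotheses (i)--(iii) on the canonical form of $J_\code$ imply all seven properties of $\code$ recorded in the first column of Table~\ref{tab:criterion}, or equivalently the seven membership/non-membership conditions on $J_\code$ in its third column. Two elementary facts about the canonical form carry the argument. First, ${\rm CF}(J_\code)$ generates $J_\code$, so every pseudo-monomial listed in (i) and (iii) actually lies in $J_\code$; combining this with Lemma~\ref{lem:receptivefields} (applied to any realization $\mathcal{U}$ of $\code$) turns such a membership into an RF containment $U_\sigma \subseteq \bigcup_{p\in\tau}U_p$. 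Second, each element of ${\rm CF}(J_\code)$ is a \emph{minimal} pseudo-monomial of $J_\code$, so none of its proper pseudo-monomial factors lies in $J_\code$; this is exactly what supplies the ``nonemptiness'' conclusions needed for rows 1--3 of the table.

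First I would dispose of (ii) and (iii). The two pseudo-monomials in (iii) are precisely the generators appearing in rows 6 and 7 of the third column of Table~\ref{tab:criterion}, so those two conditions hold immediately. For (ii): since $J_\code$ is an ideal and $\sigma \subseteq \{i,j,k,\ell\}$, the monomial $x_i x_j x_k x_\ell$ is a multiple of $x_\sigma \in J_\code$ and hence lies in $J_\code$ (row 4); the same argument with $x_\tau$ and $\tau \subseteq \{i,j,k,m\}$ gives $x_i x_j x_k x_m \in J_\code$ (row 5). Next I would handle (i), which must yield rows 1--3. Fixing a realization $\mathcal{U}$ of $\code$: because $x_i x_k(x_j+1) \in {\rm CF}(J_\code)$ is minimal, its proper pseudo-monomial factor $x_i x_k$ is not in $J_\code$, so $U_{ik} \neq \emptyset$ by Lemma~\ref{lem:receptivefields}; and because $x_i x_k(x_j+1) \in J_\code$, the same lemma gives $U_{ik} \subseteq U_j$, whence $U_{ijk} = U_{ik} \cap U_j = U_{ik} \neq \emptyset$, which is the row-1 condition $x_i x_j x_k \notin J_\code$. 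The identical argument applied to $x_j x_m(x_i+1)$ produces $U_{jm} \neq \emptyset$ and $U_{jm} \subseteq U_i$, hence $U_{ijm} \neq \emptyset$ (row 2), and applied to $x_k x_m(x_\ell+1)$ produces $U_{km} \neq \emptyset$ and $U_{km} \subseteq U_\ell$, hence $U_{k\ell m} \neq \emptyset$ (row 3).

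Having verified all seven entries of Table~\ref{tab:criterion}, I would invoke Theorem~\ref{thm:criterion-non-closed-convex-via-RF} to conclude that $\code$ is not closed-convex. The one point that requires care is keeping straight which feature of the hypotheses is used where: the assertion ``$f \in {\rm CF}(J_\code)$'' is exploited in two distinct ways, once through ${\rm CF}(J_\code) \subseteq J_\code$ to obtain the RF containments in rows 6--7 and those inside (i), and once through the minimality of $f$ to obtain the three nonemptiness statements in rows 1--3; conflating these would break the deduction. Apart from that bookkeeping, the proof is a routine translation with no geometric content beyond what is already established in Theorem~\ref{thm:criterion-non-closed-convex-via-RF}.
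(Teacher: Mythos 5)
Your proof is correct and follows essentially the same strategy as the paper's: verify that (i)--(iii) yield all seven entries of Table~\ref{tab:criterion} and then invoke Theorem~\ref{thm:criterion-non-closed-convex-via-RF}. The only small divergence is in how you derive rows 1--3: you argue that minimality of $x_ix_k(x_j+1)$ forbids its proper factor $x_ix_k$ from lying in $J_{\code}$, then pass through Lemma~\ref{lem:receptivefields} and the containment $U_{ik}\subseteq U_j$ to get $U_{ijk}=U_{ik}\neq\emptyset$; the paper instead argues purely algebraically, noting that $x_ix_jx_k\in J_{\code}$ would force $x_ix_k = x_ix_k(x_j+1)+x_ix_jx_k\in J_{\code}$, contradicting minimality directly. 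Both are sound, and the difference is purely cosmetic.
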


\begin{proof}
Recall that the canonical form is the set of minimal pseudo-monomials in the neural ideal.  Thus, the presence of the pseudo-monomials in $(ii)$ and $(iii)$ imply the properties in rows~4--7 in Table~\ref{tab:criterion}.  

Next, we claim that $x_ix_k(x_j+1) \in {\rm CF}(J_{\code})$ implies the properties in the first row in Table~\ref{tab:criterion}, specifically, $x_i x_j x_k  \notin J_{\code}$.  To see this, note that otherwise the sum $x_ix_k(x_j+1) + x_i x_j x_k = x_i x_k$ would be in $J_{\code}$, which contradicts the fact that $x_ix_k(x_j+1)$ is minimal.  So, by symmetry, the three pseudo-monomials in $(i)$ imply the properties in rows~1--3 in Table~\ref{tab:criterion}.

The corollary now follows from Theorem~\ref{thm:criterion-non-closed-convex-via-RF}.
\end{proof}

\begin{example} \label{ex:3-codes-via-RF-criterion-again}
We return to the codes $C10$ and $C15$. 
Recall that their canonical forms are shown in Example~\ref{ex:canonical-form}.  
Corollary~\ref{cor:criterion-CF} applies to show that these codes are non-closed-convex, where the indices $(i,j,k,\ell, m)$ are as in Table~\ref{tab:ijklm}, and 
$\sigma=\{i,j,\ell\}$ 
and $\tau=\{i,k,m\}$.
\end{example}

% Ideas from Joe:
% HOW TO ADAPT THE COROLLARY TO ACCOMODATE CODE C6
% -UPDATE (i) in the corollary to instead say:
% For p, q, r ∈ {i, j, k}, {i, j, m}, {k, `, m}, we have one of the two types offollowing polynomials
%xp(xr + 1) ∈CF (JC ) and xpxq 6∈CF (JC ), or
%xpxq(xr + 1) ∈CF (JC )

\newpage

\section{closed-convexity of sunflower codes} \label{sec:sunflower}
In this section, we focus on a family of codes, called sunflower codes (Definition~\ref{def:sunflower-code}).  These codes were introduced by Jeffs, 
who showed that they have no local obstructions and yet are non-open-convex\footnote{In fact, these codes are minimally non-open-convex with respect to the partial order alluded to earlier in Remark~\ref{rem:partial-order}.}~\cite{jeffs2019sunflowers}.  Nevertheless, we show that these codes are closed-convex and moreover can be realized in $\R^3$ (Theorem~\ref{thm:sunflower-code}).

\begin{definition} \label{def:sunflower-code}
Let $n\geq 2$. The {\em sunflower code}, denoted by $S_n$, 
%\subset 2^{[2n+2]}$, 
is the neural code on $2n+2$ neurons that consists
of the following codewords:
\begin{enumerate}
\item the empty codeword $\emptyset$,
\item the ``circle-edge'' codeword $\sigma \cup \{ n + 1\}$, for all nonempty proper subsets $\sigma$ of $[n]$,
\item the ``petal'' codewords $\{n+2\},~\{n+3\},~\dots,~\{2n+2\}$, 
    %$n+1+j$, for all $1 \leq j \leq n+1$,
\item the ``petal-end'' codeword $ \{1,2,\dots,i-1\} \cup \{i+1,i+2,\dots, n+1\} \cup \{ n+1+i \}$, for all $1 \leq i \leq n$,
    %{\color{brown} $(1 \cdots (i-1)(i+1) \cdots n)(n+1)(n+1+i)$,} for all $1 \leq i \leq n$,
\item the ``polygon'' codeword $\{1,2,\dots,n+1\} \cup \{2n+2\}$, and
    %$1 \cdots n(n + 1)(2n+2)$, and
\item the ``center'' codeword $\{n+2,n+3,\dots,2n+2\}$. %$(n+2)(n+3) \cdots (2n+2)$.
\end{enumerate}
\end{definition}
``Petals'' and ``center'' are terms introduced by Jeffs~\cite{jeffs2019sunflowers} (the ``petal'' and ``center'' codewords generate a ``sunflower''). 
The meaning behind these and the other names of the codewords will be shown in the proof of Theorem~\ref{thm:sunflower-code}.

\begin{example}  \label{ex:sunflower-codes}
The first two sunflower codes, with codewords listed in the order given in Definition~\ref{def:sunflower-code}, are as follows:
\begin{align*}
S_2=\{&\emptyset,~   13,~ 23,~4,~ 5,~ 6,~ \mathbf{234},~ \mathbf{135},~ \mathbf{1236},~ \mathbf{456} \}~,\\
S_3=\{&\emptyset,~ 14,~ 24,~  34,~ 124,~134,~ 234,~ 5,~ 6,~ 7,~ 8,~  \mathbf{2345},~  \mathbf{1247},~  \mathbf{1346},~  \mathbf{12348},~  \mathbf{5678} \}~.
%\\
%S_4=\{&\emptyset,6,7,8,9,(10),15,25,125,35,135,235,1235,45,145,245,1245,345,\\
%&1345,2345,13457,12359,12458,23456,12345(10),6789(10)\}
\end{align*}
A closed-convex realization of $S_2$ is shown in Figure~\ref{fig:Sunflower-2} (another such a realization is in~\cite[Figure 2]{non-monotonicity}).
\end{example}

\begin{figure}[htb]
\begin{minipage}{0.5\textwidth}
\centering
\begin{tikzpicture}
    \tikzstyle{vertex}=[circle,fill=red, inner sep = 3pt]
	\coordinate (A) at (-2,0);
	\coordinate (B) at (2,0);
	\coordinate (C) at (0,-2);
	\coordinate (D) at (0,0);
	\draw[line width=1mm] (D)--(C);
	\draw[green, line width=1mm] (-2,.225)--(2,.225);
	\draw[red,line width=1mm] (-2,.1)--(0.05,.1);
	\draw[blue,line width=1mm] (2,.35)--(-0.05,.35);
	\draw[line width=1mm] (C)--(A);
	\draw[line width=1mm] (B)--(C);
	\node[label=below:$~$] at (C) {};
	\node[label=left:$U_4$] at (-1,-1) {};
	\node[label=right:$U_5$] at (1,-1) {};
	\node[label=left:$U_6$] at (0.1,-1) {};
	\node[label=$\color{blue}{U_{1}}$] at (1,.325) {};
	\node[label=below:$\color{red}{U_{2}}$] at (-1,.1) {};
	\node[label=above:${\color{green}U_{3}}$] at (-1,.225) {};
\end{tikzpicture}

\end{minipage}%
\begin{minipage}{0.5\textwidth}
\centering
%\subfloat[]{
\begin{tikzpicture}
    \tikzstyle{vertex}=[circle, fill=black, inner sep = 3pt]
	\coordinate (A) at (-2,0);
	\coordinate (B) at (2,0);
	\coordinate (C) at (0,-2);
	\coordinate (D) at (0,0);
	\draw[line width=1mm] (D)--(C);
	\draw[line width=1mm] (A)--(B)--(C)--(A);
	\vertex[label=${1236}$] at (D) {};
	\vertex[label=${234}$] at (A) {};
	\vertex[label=${135}$] at (B) {};
	\vertex[label=below:${456}$] at (C) {};
	\node[label=left:$4$] at (-1,-1) {};
	\node[label=right:$5$] at (1,-1) {};
	\node[label=left:$6$] at (0.2,-1) {};
	\node[label=${13}$] at (1,0) {};
	\node[label=${23}$] at (-1,0) {};
\end{tikzpicture}
%}
\end{minipage}
\vspace{-0.2cm}
%\captionsetup{width=0.98\textwidth}
\caption{A closed-convex realization $\calU=\{U_1,U_2,\dots, U_6\}$ of the sunflower code $S_2$;
depicted on the right are atoms labeled by codewords.}
\label{fig:Sunflower-2}
\vspace{-0.2cm}
\end{figure}
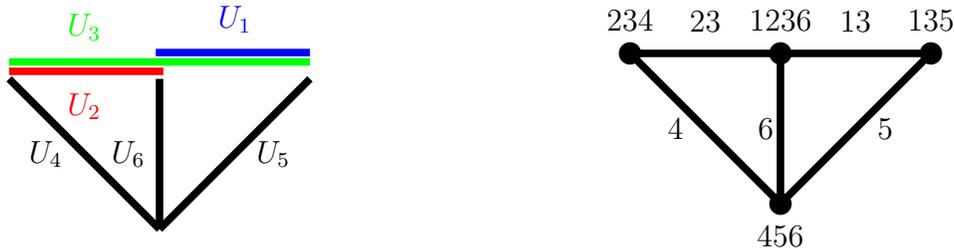

%  CODE FOR COMPUTING CF OF SUNFLOWER CODE
%load('neuralcode.py')
%
%load('iterative_canonical.spyx')
%
%def convert_to_binary(string,length):
%    L = [elm.strip() for elm in string.split(",")]
%    out = ['0'*length]
%    for elm in L :
%        out_str = ''
%        for i in range (1 , length +1) :
%            out_str += '1' if str ( i ) in elm else '0'
%        out.append ( out_str )
%    return out
%
%convert_to_binary("'4','5','6','13','23','234','135','1236','456'",6)
%
%A=NeuralCode(['000000', '000100', '000010', '000001', '101000', '011000', '011100', '101010', '111001', '000111'])
%
%A.factored_canonical()

\begin{theorem}[Closed-convexity of sunflowers] \label{thm:sunflower-code}
The sunflower code $S_2$ is closed-convex in~$\R^2$.
The sunflower code $S_n$, for $n \geq 3$, is closed-convex in $\R^3$.
\end{theorem}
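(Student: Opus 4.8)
The plan is to produce explicit closed-convex realizations. The case $n=2$ is already settled by Figure~\ref{fig:Sunflower-2}: one checks directly that the six closed convex sets drawn there realize $S_2$, i.e.\ that every codeword of Definition~\ref{def:sunflower-code} labels a nonempty atom (as the figure indicates) and that no other subset of $[6]$ does. The remaining, and main, case is $n\ge 3$, realized in $\R^3$.

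For $n\ge 3$, I would first extract from Definition~\ref{def:sunflower-code} the structure any realization must have, and then build the simplest geometry exhibiting it. Since $\{n+1\}$ is not a codeword and neurons $1,\dots,n$ occur only together with $n+1$, Lemma~\ref{lem:receptivefields} forces $U_i\subseteq U_{n+1}$ for $i\in[n]$ and $U_{n+1}=\bigcup_{j\in[n]}U_j$, so $\bigcup_{j}U_j$ must be convex; moreover, inside $U_{n+1}$ the sets $U_1,\dots,U_n$ must realize \emph{every} nonempty pattern $\sigma\subseteq[n]$, the full pattern $[n]$ must be contained in $U_{2n+2}$ (polygon codeword), each pattern $[n]\smallsetminus\{i\}$ must be partly covered by $U_{n+1+i}$ (petal-end codeword), and the petals $U_{n+2},\dots,U_{2n+2}$ must form a sunflower whose common core is the center codeword and which have private ``petal'' parts. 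Accordingly I would (a) build the \emph{hub}: a realization of the restricted code $S_n|_{[n+1]}$ by closed convex sets $U_1,\dots,U_{n+1}$ with $U_{n+1}=\bigcup_{j\in[n]}U_j$, lying in the half-space $\{z\le 0\}\subseteq\R^3$ and meeting $\{z=0\}$ in a face of each of its cells --- here one uses convex sets of growing combinatorial complexity to fit all $2^n-1$ patterns; (b) add the \emph{petals}: fix a small ``core'' ball $c$ on the positive $z$-axis above the hub and, for $i\in[n]$, let $U_{n+1+i}:=\mathrm{conv}\bigl(c\cup A_i\bigr)$, where $A_i$ is a small disk lying in $\{z=0\}$ strictly inside the cell $\bigcap_{j\ne i}U_j$, and let $U_{2n+2}:=\mathrm{conv}\bigl(c\cup(\bigcap_{j\in[n]}U_j)\bigr)$. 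Each petal contains $c$, so all $n+1$ of them share the core; and since each petal is the convex hull of a point at height $z=1$ with a set in $\{z=0\}$, it meets $\{z\le 0\}$ only in that set and hence touches the hub only in its designated cell. This last feature is exactly where the third dimension is indispensable: in $\R^2$ a petal could only reach the deep cell $\bigcap_{j\ne i}U_j$ by crossing shallower cells, creating codewords such as $\{i',\,n+1+i\}$ that do not lie in $S_n$.

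The verification splits into (a) every codeword of $S_n$ has a nonempty atom --- immediate from the witnesses built into each layer (a point of each hub cell outside all $A_j$ and outside $U_{2n+2}$ gives the circle-edge codewords; a point of $A_i$ gives the petal-end codeword $(\,[n+1]\smallsetminus\{i\}\,)\cup\{n+1+i\}$; the cell $\bigcap_{j\in[n]}U_j$, now inside $U_{2n+2}$, gives the polygon codeword; a point of a petal near its far end but off $\{z=0\}$ and off $c$ gives a petal singleton; and a point of $c$ gives the center codeword) --- and (b) no other atom is nonempty. Part (b) is the crux and the main obstacle: one must choose the hub, the disks $A_i$, the core $c$, and the petal directions so that simultaneously (i) distinct petals meet only inside $c$, so that no codeword contains two but not all petals, and in particular $U_p\cap U_q=\bigcap_r U_r$ for petals $p,q$; (ii) each petal meets the hub only in its intended cell; and (iii) no point off the hub and off the core acquires extra memberships among $U_1,\dots,U_{n+1}$. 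The clean way to force (i)--(iii) is to send the $n+1$ segments from $c$ to the well-separated targets $A_1,\dots,A_n$ and $\bigcap_{j\in[n]}U_j$ in pairwise distinct directions and then make every petal thin; thinness yields (i)--(iii) at once, while convexity and closedness are automatic for closed neighborhoods of segments in $\R^3$. Since this realization lives in $\R^3$, we get $\cdim(S_n)\le 3$, which, together with the known growth of the open embedding dimension of these codes~\cite{jeffs2019sunflowers,jeffs2019embedding}, furnishes the examples promised in Remark~\ref{rem:differ-emb-dim}.
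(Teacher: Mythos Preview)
Your proposal follows the same architecture as the paper's proof: a planar ``hub'' realizing $S_n|_{[n+1]}$, with $n+1$ petals rising into a third dimension and meeting at a common core. The difference is in the execution. You work with full-dimensional objects (a 3D hub, a ball $c$ for the core, disks $A_i$, cone-shaped petals $\mathrm{conv}(c\cup A_i)$), which then forces you into a ``thinness'' argument to secure conditions (i)--(iii); you also leave the hub construction itself unspecified. The paper instead makes all of these objects degenerate: the hub is genuinely 2-dimensional (a disk $U_{n+1}$ with an inscribed $(2^n-2)$-gon $P$, each $U_i$ obtained by slicing off the appropriate circle-edge slivers, citing~\cite{what-makes}); the core is a \emph{single point} $q$ above the plane; each petal $U_{n+1+i}$ is the \emph{line segment} from $q$ to one point in the circle-edge atom of $[n+1]\smallsetminus\{i\}$; and $U_{2n+2}$ is the pyramid $\mathrm{conv}(\{q\}\cup P)$. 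With these choices, your conditions (i)--(iii) hold on the nose---distinct segments from $q$ meet only at $q$, each segment meets the plane only at its far endpoint, and a pyramid over $P$ meets the plane in exactly $P$---so no thinness or genericity is needed and the verification is immediate. In short, your approach is correct, and shrinking everything to its limiting degenerate form recovers the paper's cleaner construction.
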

\begin{proof} A closed-convex realization for $S_2$ is shown in Figure~\ref{fig:Sunflower-2}.
%\begin{equation*}
%S_2=\{\emptyset, 4,5,6,13,23,234,135,1236,456\}
%\end{equation*}
For  $n\geq 3$, we construct a closed-convex realization in three steps, as follows.

\noindent
{\bf Step One.}
The restriction of the code $S_n$ to the neurons $\{1,2,\dots, n+1\}$ is 
the code obtained by taking all subsets of $\{1,2,\dots,n\}$ and then adding the neuron $n+1$ to all nonempty subsets.  This restricted code has a unique maximal codeword (namely, $\{1,2, \dots, n+1 \}$, which is the restriction of the ``polygon'' codeword).  Thus, the restricted code can be realized by closed-convex sets in $\mathbb{R}^2$ as a polygon $P$ (with $(2^n-2)$ sides) inscribed in a circle so that each ``circle-edge'' atom lies between the circle and an edge of the polygon~\cite{what-makes}.  This realization is shown for $S_3$ in  Figure~\ref{fig:Sunflower-3}.  

Specifically, $U_{n+1}$ is the filled-in circle, and each of $U_1,U_2,\dots, U_n$ is obtained from this circle by ``slicing off'' some ``circle-edge'' atoms.  Thus, the codewords we have obtained so far are $\{1,2\dots, n+1 \}$ (the subset of the ``polygon'' codeword) and all ``circle-edge'' codewords.

\begin{figure}[ht]
\centering
\begin{tikzpicture}[scale=2.8]
\draw[red, opacity=0.3] (0,1)--({sin(\y)},{cos(\y)})--({sin(2*\y)},{cos(2*\y)})--({sin(3*\y)},{cos(3*\y)})--({sin(4*\y)},{cos(4*\y)})--({sin(5*\y)},{cos(5*\y)})--({sin(6*\y)},{cos(6*\y)});
\draw[red, opacity=0.3] circle(1);
\node[label=$234$] at ({sin(\y)/2+sin(6*\y)/2},{cos(\y)/2+cos(6*\y)/2}){};
\node[label=$34$] at ({sin(2*\y)/2+sin(\y)/2},{cos(\y)/2+cos(2*\y)/2}){};
\node[label=below:$134$] at ({sin(2*\y)/2+sin(3*\y)/2},{cos(2*\y)/2+cos(3*\y)/2}){};
\node[label=below:$14$] at ({sin(3*\y)/2+sin(4*\y)/2},{cos(3*\y)/2+cos(4*\y)/2}){};
\node[label=$124$] at ({sin(4*\y)/2+sin(5*\y)/2},{cos(4*\y)/2+cos(5*\y)/2}){};
\node[label=$24$] at ({sin(5*\y)/2+sin(6*\y)/2},{cos(5*\y)/2+cos(6*\y)/2}){};
\node[label=$1234$] at (0,0){};
\end{tikzpicture}
\caption{Result of Step One for the sunflower code $S_3$.} \label{fig:Sunflower-3}
\end{figure}
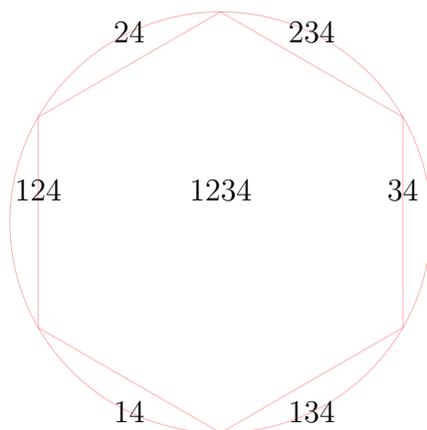

\noindent
{\bf Step Two.}
We pick a point $q $
in a plane parallel to the one in which the polygon $P$ lies, and define $U_{2n+2}$ to be the pyramid with base equal to $P$ and top point $q$.
This is shown in Figure~\ref{fig:Sunflower-3-step-2} for $S_3$.
In this way, the codeword $\{1,2\dots, n+1 \}$ from Step One becomes 
the full ``polygon'' codeword $\{1,2\dots, n+1 \} \cup \{2n+2\}$, and all other codewords from Step One are unaffected.  We also obtain the ``petal'' codeword $\{2n+2\}$.

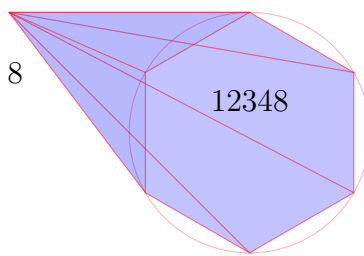
\begin{figure}[ht]
\centering
\begin{tikzpicture}[scale=1.6]
    % polygon
\draw[red, opacity=0.3, fill=blue, fill opacity=0.05] (0,1)--({sin(\y)},{cos(\y)})--({sin(2*\y)},{cos(2*\y)})--({sin(3*\y)},{cos(3*\y)})--({sin(4*\y)},{cos(4*\y)})--({sin(5*\y)},{cos(5*\y)})--({sin(6*\y)},{cos(6*\y)});
\draw[red, opacity=0.3] circle(1);
\draw[red, opacity=0.3, fill=blue, fill opacity=0.2] 
	(0,1) -- ({sin(\y)},{cos(\y)}) -- (-2,1) --(0,1) ;
\draw[red, opacity=0.3, fill=blue, fill opacity=0.2] 
	(-2,1) -- ({sin(\y)},{cos(\y)}) -- ({sin(2*\y)},{cos(2*\y)}) -- (-2,1)  ;
\draw[red, opacity=0.3, fill=blue, fill opacity=0.2] 
	(-2,1) -- ({sin(2*\y)},{cos(2*\y)}) -- ({sin(3*\y)},{cos(3*\y)}) -- (-2,1)  ;
\draw[red, opacity=0.3, fill=blue, fill opacity=0.2] 
	(-2,1) -- ({sin(3*\y)},{cos(3*\y)}) -- ({sin(4*\y)},{cos(4*\y)}) -- (-2,1)  ;
\draw[red, opacity=0.3, fill=blue, fill opacity=0.1] 
	(-2,1) -- ({sin(4*\y)},{cos(4*\y)}) -- ({sin(5*\y)},{cos(5*\y)}) -- (-2,1)  ;
\draw[red, opacity=0.3, fill=blue, fill opacity=0.1] 
	(-2,1) -- ({sin(5*\y)},{cos(5*\y)}) -- ({sin(6*\y)},{cos(6*\y)}) -- (-2,1)  ;
\draw[red, opacity=0.3, fill=blue, fill opacity=0.1] 
	(-2,1) -- (0,1) -- ({sin(6*\y)},{cos(6*\y)}) -- (-2,1)  ;
\node[label=$12348$] at (0,0){};
\node[label=left:$8$] at (-1.7, 0.5){};
%\node[label=left:$q$] at (-2,1){};
\end{tikzpicture}
\caption{Result of Step Two for the sunflower code $S_3$.  The codeword $1234$ from Step One is now $12348$ (the corresponding atom is the hexagon).} \label{fig:Sunflower-3-step-2}
\end{figure}

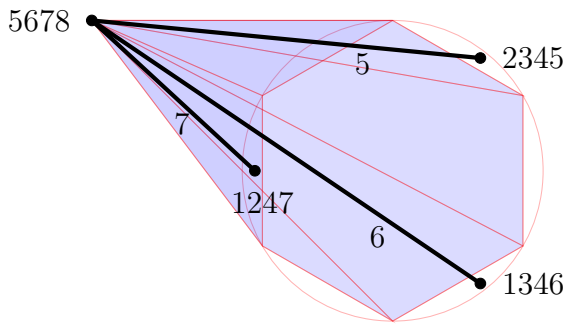
\begin{figure}[ht]
\centering
\begin{tikzpicture}[scale=2]
    % polygon
\draw[red, opacity=0.3, fill=blue, fill opacity=0.05] (0,1)--({sin(\y)},{cos(\y)})--({sin(2*\y)},{cos(2*\y)})--({sin(3*\y)},{cos(3*\y)})--({sin(4*\y)},{cos(4*\y)})--({sin(5*\y)},{cos(5*\y)})--({sin(6*\y)},{cos(6*\y)});
\draw[red, opacity=0.3] circle(1);
\draw[red, opacity=0.3, fill=blue, fill opacity=0.1] 
	(0,1) -- ({sin(\y)},{cos(\y)}) -- (-2,1) --(0,1) ;
\draw[red, opacity=0.3, fill=blue, fill opacity=0.1] 
	(-2,1) -- ({sin(\y)},{cos(\y)}) -- ({sin(2*\y)},{cos(2*\y)}) -- (-2,1)  ;
\draw[red, opacity=0.3, fill=blue, fill opacity=0.1] 
	(-2,1) -- ({sin(2*\y)},{cos(2*\y)}) -- ({sin(3*\y)},{cos(3*\y)}) -- (-2,1)  ;
\draw[red, opacity=0.3, fill=blue, fill opacity=0.1] 
	(-2,1) -- ({sin(3*\y)},{cos(3*\y)}) -- ({sin(4*\y)},{cos(4*\y)}) -- (-2,1)  ;
\draw[red, opacity=0.3, fill=blue, fill opacity=0.1] 
	(-2,1) -- ({sin(4*\y)},{cos(4*\y)}) -- ({sin(5*\y)},{cos(5*\y)}) -- (-2,1)  ;
\draw[red, opacity=0.3, fill=blue, fill opacity=0.1] 
	(-2,1) -- ({sin(5*\y)},{cos(5*\y)}) -- ({sin(6*\y)},{cos(6*\y)}) -- (-2,1)  ;
\draw[red, opacity=0.3, fill=blue, fill opacity=0.1] 
	(-2,1) -- (0,1) -- ({sin(6*\y)},{cos(6*\y)}) -- (-2,1)  ;
%\node[label=$12348$] at (0,0){};
% label codewords
% 5678
\node[label=left:$5678$] at (-2,1){};
% 2345
\node[label=right:$2345$] at ({sin(\y)/2+sin(6*\y)/2 +0.15 },{cos(\y)/2+cos(6*\y)/2}) {};
% 1346
\node[label=right:$1346$] at ({sin(2*\y)/2+sin(3*\y)/2 + 0.15 },{cos(2*\y)/2+cos(3*\y)/2}){};
% 1247
\node[label=below:$1247$] at ({sin(4*\y)/2+sin(5*\y)/2},{cos(4*\y)/2+cos(5*\y)/2}){};
%
% label codeowrds 5, 6, 7
\node[label=$5$] at (-0.2, 0.5){};
\node[label=$6$] at (-0.1, -0.65){};
\node[label=$7$] at (-1.4, 0.1 ){};
% line segment to 2345
\draw [ultra thick] (-2,1) -- 	
    ({sin(\y)/2+sin(6*\y)/2 + 0.15}  , {cos(\y)/2+cos(6*\y)/2}) ;
% line segment to  1346
\draw [ultra thick] (-2,1) --  
    ({sin(2*\y)/2+sin(3*\y)/2 + 0.15 },{cos(2*\y)/2+cos(3*\y)/2}) ;
% line segment to  1247
\draw [ultra thick] (-2,1) --  
    ({sin(4*\y)/2+sin(5*\y)/2 -0.05 },{cos(4*\y)/2+cos(5*\y)/2}) ;
%vertices
\filldraw[black] (-2,1) circle (1pt);
%2345
\filldraw[black] ({sin(\y)/2+sin(6*\y)/2 + 0.15} , {cos(\y)/2+cos(6*\y)/2}) circle (1pt);
%1346
\filldraw[black] ({sin(2*\y)/2+sin(3*\y)/2 + 0.15 },{cos(2*\y)/2+cos(3*\y)/2}) circle (1pt);
%1247
\filldraw[black]     ({sin(4*\y)/2+sin(5*\y)/2 -0.05 },{cos(4*\y)/2+cos(5*\y)/2}) circle (1pt);
\end{tikzpicture}
\caption{Result of Step Three for the sunflower code $S_3$.} \label{fig:Sunflower-3-step-3}
\end{figure}

\noindent
{\bf Step Three.}
To obtain the remaining codewords, we define each of $U_{n+2}, U_{n+3}, \dots, U_{2n+1}$ to be a line segment from $q$ to some point in the corresponding ``circle-edge'' atom.  Thus, these line segments (what Jeffs calls petals~\cite{jeffs2019sunflowers}) all come out of the circle, meeting at a common point (the center~\cite{jeffs2019sunflowers}).
See Figure~\ref{fig:Sunflower-3-step-3} for $S_3$.  
This procedure generates all remaining codewords (without destroying any old ones): the shared endpoint $q$ is the atom of the ``center'' codeword, the other endpoints of the line segments are the atoms of all the ``petal-end'' codewords, and the relative interiors of the line segments are the atoms of the ``petal'' codewords (except $\{2n+2\}$ which was already obtained in Step Two). 

We conclude that the resulting code is 
${\rm code}(\mathcal{U}, \mathbb{R}^3) %\mathbb{R}^d)
=S_n$.
\end{proof}

\begin{remark} \label{rem:dim-3-vs-2} By 
Theorem~\ref{thm:sunflower-code}, the sunflower codes satisfy
$    \cdim(S_n) \leq 3$, for $n \geq 3$.  However, we do not know whether 
$    \cdim(S_n)$ equals $2$ or $3$.
\end{remark}

\section{Discussion} \label{sec:discussion}
Our work helps clarify how open-convex and closed-convex codes are related.  
Some of our results help unify the theories of open-convexity and closed-convexity: these concepts are the same for nondegenerate codes, although their open and closed embedding dimensions may differ. Thus, for nondegenerate codes, results from open-convexity also hold for and thus can be transferred to closed-convexity, and vice-versa.  

We also introduced a code's nondegenerate embedding dimension, and in the future we would like more results and bounds on this dimension.  Indeed, echoing Cruz {\em et al.}~\cite{cruz2019open}, nondegenerate realizations may be well-suited for applications, as their boundaries do not matter (Theorem~\ref{thm:nondeg-summary}).

%Along the way, we posed several questions concerning embedding dimensions.  We asked whether the open embedding dimension can be strictly less than the nondegenerate embedding dimension, when these dimensions are finite (Question~\ref{q:open-dim-less-than-nondeg-dim}), and whether the open and closed dimensions can be finite when nondegenerate realizations do not exist (Question~\ref{q:nondeg-realization-exists}).  We also conjectured cases when the three embedding dimensions are equal (Conjectures~\ref{conj:dim-1} and~\ref{conj:dim}).  These problems are all unresolved.

%%%% REPLACEMENT TO CURRENT NON-CLOSED-CONVEX COMMENTS
Another question for future work, posed in Remark~\ref{rem:partial-order}, concerns the infinite family of codes $\ncfamn$ we constructed: 
Are these codes minimally non-closed-convex with respect to the partial order defined by Jeffs~\cite{jeffs2020morphisms}?  
In fact, no codes have been confirmed (or constructed) to be minimally non-closed-convex but also open-convex.  
%While there are codes that are minimally non-closed-convex, all such known codes are minimally non-open-convex as well. 
In contrast, there are codes that are minimally non-open-convex but also closed-convex: the sunflower codes (Definition~\ref{def:sunflower-code}) form an infinite family of such codes~\cite{jeffs2019sunflowers}. 
Therefore, it would be interesting to find a family of codes that are minimally non-closed-convex but open-convex, or even just one such example. 

The main contribution of our work concerns codes that are not closed-convex (but possibly open-convex).  Notably, we gave the first general criteria for precluding closed-convexity.  % (without ruling out open-convexity).
One criterion is built on a novel concept, rigid structures, and we expect this idea to be fruitful in the future (as it already is in~\cite{open-closed-nondeg}).  
To this end, we will need more ways to check whether a code has a rigid structure (Proposition~\ref{prop:path-rigid} is a significant first step).  

In contrast, our other criterion is read directly from a code (or its neural ideal) and, moreover, can often be checked from a code's canonical form.  This criterion, which unifies proofs in previous works, also gives information on which non-maximal codewords can be added to a code without making the code become closed-convex (Remark~\ref{rem:add-codewords}). 
This is remarkable, as analogous results have not yet been stated for open-convexity. 

Going forward, we would like to apply and to improve our new criteria (recall, for instance, the code in Example~\ref{ex:non-monotone-code-rigid} which our criteria can not handle),  
with the aim of further classifying closed-convex codes on $6$ or more neurons.  In turn, such a classification will help elucidate when open-convexity and closed-convexity are essentially the same concept and when these concepts fundamentally differ.

%\section*{Notation we are using (temporary section)}

%$\calC$\\ 
%$\calU$ \\ % = \realsets$\\
%$\fullreal$\\
%$\scfaceS$\\
%$\scfaceSalt$\\
%$\scfaceT$\\
%$\atomMacro{\calU}{\sigma}$\\

%\section*{Notation that we may use, but do not yet need (temporary section)}
%$\scplexC$\\
%$\scplexCres$\\
%$\LkDelta$\\
%$\LkCfaceS$\\
%$\LkCfaceSalt$\\
%$\LkCfaceT$\\
%$\LkCres$\\
%$\MaxIntDelta$ \\
%$\MaxIntC$\\
%$\ManDelta$\\
%$\ManC$\\
% $\tk$

%%%%%%%%%%%%%%%%%%%%%%%%%%%%%%%%%%%%%%%%%%%%%%%%%
\subsection*{Acknowledgements}
PC, KJ, and JL initiated this research in the 2020 REU in the Department of Mathematics at Texas A\&M
University, supported by the NSF (DMS-1757872).  
AR and AS were supported by the NSF (DMS-1752672).  AR and AS thank R.\ Amzi Jeffs, Caitlin Lienkaemper, and Nora Youngs for helpful discussions.  The authors thank Ola Sobieska for editorial suggestions that improved the exposition, and acknowledge two diligent reviewers for their many detailed suggestions, which strengthened this work.

%%%%%%%%%%%%%%%%%%%%%%%%%%%%%%%%%%%%%%%%%%%%%%%%%
\bibliography{Bibliography.bib}{}
\bibliographystyle{plain}

\end{document}